\documentclass[11pt]{article}
\usepackage{amsfonts,amsmath,amsthm,amssymb}
\usepackage{hyperref}
\usepackage[enableskew]{youngtab}
\newtheorem{theorem}{Theorem}
\newtheorem*{thm:repeat}{Theorem \ref{thm:cosets-span}}
\newtheorem{conj}{Conjecture}
\newtheorem{lemma}{Lemma}
\newtheorem{claim}{Claim}
\newtheorem{Observation}{Observation}
\newtheorem*{fact}{Fact}
\newtheorem{definition}{Definition}
\newtheorem{cor}{Corollary}
\newtheorem{remark}{Remark}

\newcommand{\tr}{\textrm{Tr}}

\newcommand{\fpf}{\textrm{FPF}}
\newcommand{\sgn}{\textrm{sgn}}
\newcommand{\Span}{\textrm{Span}}
\newcommand{\Shape}{\textrm{Shape}}
\newcommand{\Split}{\textrm{Split}}
\newcommand{\domleq}{\unlhd}
\newcommand{\domgeq}{\unrhd}
\newcommand{\domless}{\lhd}

\newcommand{\Cay}{\textrm{Cay}}
\newcommand{\even}{\textrm{ev}}
\newcommand{\odd}{\textrm{odd}}

\newcommand{\yodd}{Y_{\textrm{\scriptsize{odd}}}}
\newcommand{\yeven}{Y_{\textrm{\scriptsize{even}}}}
\newcommand{\zodd}{Z_{\textrm{\scriptsize{odd}}}}
\newcommand{\zeven}{Z_{\textrm{\scriptsize{even}}}}

\newcommand{\A}[0]{{\cal A}}
\newcommand{\B}[0]{{\cal B}}
\newcommand{\Ell}[0]{{\cal L}}

\newcommand{\ra}[0]{\rightarrow}
\newcommand{\enote}[1]{{\bf (Ehud:} {\bf #1}{\bf ) }}
\newcommand{\hnote}[1]{{\bf (Haran:} {\bf #1}{\bf ) }}

\newcommand{\remove}[1]{}
\author{David Ellis\thanks{DPMMS, University of Cambridge.},
Ehud Friedgut\thanks{Hebrew
University, Jerusalem, and University of Toronto.
Research supported in part by the Israel Science Foundation, grant
no. 0397684, and NSERC grant 341527.}
and Haran Pilpel\thanks{Hebrew
University, Jerusalem. Research supported in part by the Giora Yoel Yashinsky
memorial grant. Current affiliation: Google, Inc.}}
\title{Intersecting Families of Permutations}
\begin{document}
\date{}
\renewcommand{\thefootnote}{\fnsymbol{footnote}}
\footnotetext{AMS 2000 subject classification: 05E10, 20C30, 05D99}
\footnotetext{Key words and phrases: Intersecting families of permutations, Erd\H{os}-Ko-Rado,
representation theory.}
\maketitle
\begin{abstract}
  A set of permutations $I \subset S_n$
  is said to be $k$-{\em intersecting} if any two permutations in \(I\)
  agree on at least \(k\) points. We show that for any $k \in \mathbb{N}$, if \(n\) is sufficiently large depending on \(k\), then the largest $k$-intersecting subsets of $S_n$ are cosets of stabilizers of \(k\) points, proving a conjecture of Deza and
  Frankl. We also prove a similar result concerning \(k\)-cross-intersecting subsets.
  Our proofs are based on eigenvalue techniques and the representation
  theory of the symmetric group.
\end{abstract}
\section{Introduction}
The classical Erd\H os-Ko-Rado theorem states that if $r < n/2$, an intersecting family of
 $r$-subsets of $\{1,2,\ldots,n\}$ has size at most ${n-1 \choose r-1}$;
 if equality holds, the family must consist of all $r$-subsets containing a fixed element.
 The so-called `second Erd\H os-Ko-Rado theorem' states that if $n$ is sufficiently large depending on
  $k$ and $r$, then any $k$-intersecting family of $r$-subsets of $\{1,2,\ldots,n\}$ has size at most
   ${n-k \choose r -k}$; if equality holds, the family must consist of all $r$-subsets containing $k$ fixed elements.
    We deal with analogues of these results for permutations.

 As usual, \([n]\) will denote the set \(\{1,2,\ldots,n\}\), and \(S_n\) will denote the symmetric group, the group of all permutations of \([n]\). Two permutations $\sigma,\tau \in S_n$ are said to {\em intersect} if they agree at some point, i.e. if there exists \(i \in [n]\) such that \(\sigma(i)=\tau(i)\). Similarly, they are said to $k$-{\em intersect}
if they agree on at least $k$ points, i.e., if there exist $i_1,i_2,\ldots,i_k \in
[n]$ such that $\sigma(i_t) = \tau(i_t)$ for $t= 1, 2,\ldots, k$. A
subset $I \subset S_{n}$ is said to be $k$-{\em intersecting} if any two permutations
in $I$ $k$-intersect. In this paper, we characterize the largest
$k$-intersecting subsets of \(S_n\) for \(n\) sufficiently large depending on \(k\), proving a conjecture of Deza and Frankl. We also prove a similar result concerning \(k\)-cross-intersecting subsets, proving a conjecture of Leader.

Our main tool in this paper is Fourier analysis on the symmetric
group, which entails representation theory. Although Fourier analysis has become a central tool in
combinatorics and computer science in the last two decades
(notably, since the landmark paper of \cite{kkl}), it has not often been
applied to combinatorial problems in a non-Abelian setting. In
retrospect, it seems that in this case it fits the task perfectly.

As a bonus, we point out a nice aspect of Boolean (0/1 valued) functions on $S_n$.
One of the recurring themes in the applications of discrete Fourier
analysis to combinatorics over the last decade has been showing that Boolean functions on \(\{0,1\}^{n}\) are `juntas' (i.e. depend essentially on few coordinates) precisely when their Fourier transform is concentrated mainly on small sets. Here, we study Boolean functions on $S_n$ whose Fourier transform is supported on the irreducible representations of low dimension,
and connect them to cosets of subgroups which are the pointwise stabilizers of small subsets of $\{1,2\ldots,n\}$.
Along the way, we also prove an interesting generalization of Birkhoff's theorem
on bistochastic matrices.

\subsection{History and related results}
\label{sec:history}
Let
\[T_{i \mapsto j} = \{ \sigma \in S_n,\ \sigma(i)=j\}.\]
Clearly, $T_{i \mapsto j}$ is 1-intersecting subset of \(S_n\), with size $(n-1)!$. Let
\[T_{i_1 \mapsto j_1, \ldots, i_k \mapsto j_k}
= \bigcap_{t=1}^k T_{i_t \mapsto j_t} = \{\sigma \in S_n:\ \sigma(i_t)=j_t\ (1 \leq t \leq k)\}.\]
If \(i_1,\ldots,i_k\) are distinct and \(j_1,\ldots,j_k\) are distinct, then \(T_{i_1 \mapsto j_1, \ldots, i_k \mapsto j_k}\) is a coset of the stabilizer of \(k\) points; we will refer to it as a $k$-{\em coset}.

Clearly, a $k$-coset is a $k$-intersecting family of size \((n-k)!\). As observed by Deza and Frankl \cite{deza-frankl}, it is easy to prove that a 1-intersecting subset of \(S_n\) is no larger than a 1-coset:
\begin{theorem} \label{one-intersecting}
  \cite{deza-frankl} For any \(n \in \mathbb{N}\), if \(I \subset S_n\) is $1$-intersecting, then $|I| \leq (n-1)!$.
\end{theorem}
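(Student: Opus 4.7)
The plan is to use the classical coset-partition argument, exhibiting an explicit sharply transitive subgroup whose cosets each contain at most one member of $I$. I would phrase the intersection condition in group-theoretic terms: two permutations $\sigma,\tau \in S_n$ intersect if and only if $\sigma^{-1}\tau$ has a fixed point, i.e., $\sigma^{-1}\tau$ is not a derangement. So an intersecting family is exactly a subset of $S_n$ containing no two elements whose quotient is a derangement.

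First I would exhibit a collection of $n$ pairwise non-intersecting permutations. The natural choice is the cyclic subgroup $C = \langle (1\ 2\ \cdots\ n) \rangle \leq S_n$: every non-identity element of $C$ is an $n$-cycle, hence a derangement, so any two distinct elements of $C$ fail to intersect. The same holds for every left coset $\sigma C$, since $(\sigma c_1)^{-1}(\sigma c_2) = c_1^{-1}c_2 \in C \setminus \{e\}$ whenever $c_1 \neq c_2$.

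Next I would partition $S_n$ into the $(n-1)!$ left cosets of $C$. Since $I$ is 1-intersecting and each coset consists of $n$ pairwise non-intersecting permutations, $I$ contains at most one element from each coset. Summing over the $(n-1)!$ cosets gives $|I| \le (n-1)!$, as required.

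There is really no obstacle here — the only step requiring any thought is producing the sharply transitive set of $n$ derangements, and the cyclic group generated by an $n$-cycle works on the nose. I would note in passing that this argument is tight precisely because the cosets $T_{i \mapsto j}$ meet every coset of $C$ in exactly one element, so the upper bound is achieved. Characterizing the extremal families, however, is substantially harder and is not claimed by the theorem as stated.
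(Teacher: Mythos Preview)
Your proposal is correct and is essentially the same argument as the paper's: both take the cyclic subgroup generated by the $n$-cycle $(1\,2\,\cdots\,n)$, observe that its left cosets consist of pairwise non-intersecting permutations, and conclude that a $1$-intersecting family meets each of the $(n-1)!$ cosets in at most one element. Your write-up is slightly more detailed (making the derangement reformulation explicit and verifying the coset property), but the content is identical.
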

\begin{proof}
  Let $H$ be the cyclic group generated by the $n$-cycle $(123\ldots n)$.
   No two permutations in $H$ intersect, and the same is true for any left coset of
   $H$. The $(n-1)!$ left cosets of $H$ partition $S_{n}$. If $I \subset S_n$ is 1-intersecting, then \(I\) contains at most one permutation from each left coset of \(H\), and therefore $|I| \leq (n-1)!.$
\end{proof}

Deza and Frankl conjectured in \cite{deza-frankl} that for any \(n \in \mathbb{N}\), the 1-cosets are the only 1-intersecting subsets of \(S_n\) with size \((n-1)!\). Perhaps surprisingly, this turned out to be substantially harder to prove; it was first proved by Cameron and Ku \cite{cameron-ku} and independently by Larose and Malvenuto \cite{larose-malvenuto}.

What about \(k\)-intersecting families? For \(n\) small depending on \(k\), the \(k\)-cosets need not be the largest \(k\)-intersecting subsets of \(S_n\). Indeed,
\[\{\sigma \in S_{n}:\ \sigma \textrm{ has at least }k+1\textrm{ fixed points in }\{1,2,\ldots,k+2\}\}\] 
is a \(k\)-intersecting family with size
\[(k+2)(n-k-1)!-(k+1)(n-k-2)!,\]
which is larger than \((n-k)!\) if \(k \geq 4\) and \(n \leq 2k\). However, Deza and Frankl conjectured in \cite{deza-frankl} that if \(n\) is large enough depending on \(k\), the \(k\)-cosets {\em are} the largest \(k\)-intersecting subsets of \(S_n\):

\begin{conj}[Deza-Frankl]
\label{conj:deza-frankl}
For any \(k \in \mathbb{N}\) and any \(n\) sufficiently large depending on \(k\), if \(I \subset S_n\) is \(k\)-intersecting, then \(|I| \leq (n-k)!\). Equality holds if and only if \(I\) is a \(k\)-coset of \(S_n\).
\end{conj}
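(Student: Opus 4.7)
The plan is to prove the upper bound $|I|\leq (n-k)!$ by applying a Hoffman-type spectral bound to a (possibly weighted) Cayley graph of $S_n$, and then to characterise the extremal families by combining the resulting Fourier-support condition with a junta-style structure theorem for Boolean functions on $S_n$ whose Fourier transform is supported on the ``high'' irreducible representations.

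First I would consider the Cayley graph $G$ on $S_n$ whose connection set $\A$ is the set of permutations with fewer than $k$ fixed points, so that a $k$-intersecting family is precisely an independent set in $G$. Since $\A$ is a union of conjugacy classes, the adjacency operator of $G$ is diagonalised by the isotypic decomposition $L^2(S_n)=\bigoplus_{\lambda\vdash n} V_\lambda$, with eigenvalue
\[
\eta_\lambda \;=\; \frac{1}{\dim [\lambda]}\sum_{\sigma\in\A}\chi_\lambda(\sigma)
\]
on $V_\lambda$. Using the Murnaghan-Nakayama rule together with estimates of $\chi_\lambda$ at permutations of small support, I would show that, for $n$ large compared to $k$, the most negative normalised eigenvalue $\eta_\lambda/|\A|$ is attained exactly on the set of partitions $\mathcal{U}_k=\{\lambda\vdash n:\lambda_1\geq n-k,\ \lambda\neq(n)\}$, and that the Hoffman ratio bound then yields $|I|\leq (n-k)!$. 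If the uniform Cayley graph does not quite give a tight bound for all $k$, one replaces it by a weighted version: take a conjugation-invariant function $f$ supported on $\A$, chosen so that the smallest eigenvalue of the convolution operator $T_f$ matches $-f(e)/((n-k)!^{-1}\cdot n! - 1)$, forcing Hoffman's inequality to become an equality at $(n-k)!$.

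For the uniqueness, equality in the spectral bound forces the Fourier expansion of $\mathbf{1}_I$ to be supported on $\{(n)\}\cup\mathcal{U}_k$. I would then prove (this is the Birkhoff-type generalisation hinted at in the abstract) that $V_{(n)}\oplus\bigoplus_{\lambda\in\mathcal{U}_k}V_\lambda$ is precisely the span of the indicator functions of the $k$-cosets $T_{i_1\mapsto j_1,\ldots,i_k\mapsto j_k}$; a dimension count using Young's rule on $\mathrm{Ind}_{S_{n-k}\times S_{n-k}}^{S_n\times S_n}(\mathbf{1})$ should show these two subspaces have matching dimensions. So $\mathbf{1}_I$ must be a $\{0,1\}$-valued rational combination of indicators of $k$-cosets.

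The main obstacle is the final step: concluding that a Boolean function in this span with $|I|=(n-k)!$ is genuinely a single $k$-coset. This is the ``junta theorem'' for $S_n$ that the last paragraph of the introduction announces — Boolean functions on $S_n$ whose Fourier transform is concentrated on the low-dimensional representations depend essentially on the values of the permutation at $k$ coordinates. Once such a structure theorem is established, a short combinatorial argument, using that $I$ is $k$-intersecting (not merely a union of $k$-cosets), isolates the single-$k$-coset families as the only possibilities of the correct cardinality. I expect this final step to be the hardest: a representation-theoretic analogue of the Friedgut-Kalai-Naor theorem is not available off the shelf, and the non-Abelian nature of $L^2(S_n)$ makes the usual hypercontractive arguments substantially more subtle, so some new stability analysis tailored to $S_n$ will likely be needed.
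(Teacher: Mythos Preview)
Your overall architecture matches the paper's, but there are two substantive gaps.

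First, the claim about the unweighted Cayley graph $\Gamma_k=\Cay(S_n,\fpf_k)$ is false: the paper computes explicitly that its minimum eigenvalue is $\lambda_{(n-1,1)}^{(k)}=-\Theta_k((n-1)!)$, so Hoffman's bound yields only $|I|\leq\Theta_k((n-1)!)$, not $(n-k)!$. The weighted replacement you mention as a fallback is therefore not optional but the entire point, and constructing the weights is the bulk of the work. The paper does this by solving a linear system governed by a fixed minor of the character table (shown to be invertible and independent of $n$) so as to force $\lambda_\rho=\omega$ for every fat $\rho\neq(n)$. A further difficulty you do not anticipate is that for each fat $\rho$ the \emph{tall} transpose $\rho^t$ satisfies $\chi_{\rho^t}=\chi_\rho\cdot\sgn$, so its eigenvalue has the same absolute value and cannot be made small just by dimension estimates; the paper handles this by building two weightings $\yeven,\yodd$ supported on even and odd conjugacy classes respectively, and averaging them so that the tall eigenvalues cancel to zero.

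Second, you have inverted the roles of the last two steps. Showing that $\bigoplus_{\lambda_1\geq n-k}V_\lambda$ equals the span of the $k$-coset indicators is \emph{not} where the Birkhoff generalisation enters; that identity is proved directly via the branching rule and Fourier inversion. The generalised Birkhoff theorem is used precisely for the step you flag as hardest: showing that a \emph{Boolean} function in this span must be the indicator of a disjoint union of $k$-cosets. No FKN-type stability or hypercontractivity is needed; it is a linear-programming duality argument in which the dual obstruction is a ``$k$-bistochastic'' array, and the generalised Birkhoff theorem decomposes any such array as a convex combination of permutation matrices induced from $S_n$. Once $I$ is known to be a disjoint union of $k$-cosets of total size $(n-k)!$, it is automatically a single $k$-coset, and the $k$-intersecting hypothesis is not invoked again.
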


This is our main result. Our proof uses eigenvalue techniques, together with the representation theory of \(S_n\). In fact, it was proved by the first author and the last two authors independently in 2008. Our two proofs of the upper bound are essentially equivalent, hence the joint paper. However, the latter two authors proved the equality statement directly, via their generalization of Birkhoff's Theorem (Theorem \ref{thm:genbirkhoff}), whereas the first author deduced it from the following `stability' result, proved in \cite{kstability}:

\begin{theorem}[\cite{kstability}]
\label{thm:ellis-stability}
Let \(k \in \mathbb{N}\) be fixed, let \(n\) be sufficiently large depending on \(k\), and let \(I \subset S_n\) be a \(k\)-intersecting family which is not contained within a \(k\) coset. Then \(I\) is no larger than the family
\begin{eqnarray*}
&&\{\sigma \in S_n:\ \sigma(i)=i \ \ \forall i \leq k,
\ \sigma(j)=j\ \textrm{for some}\ j > k+1\}\\
&\cup & \{(1,\ k+1),(2,\ k+1),\ldots,(k, \ k+1)\},
\end{eqnarray*}
which has size \((1-1/e+o(1))(n-k)!\).
\end{theorem}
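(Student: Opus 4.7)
The plan is to establish this stability result by combining the main upper bound $|I|\leq(n-k)!$ (from Theorem~\ref{thm:cosets-span}) with a cross-intersection argument that exploits the existence of an element of $I$ lying outside every $k$-coset. First I would left-translate so that $\mathrm{id} \in I$, which forces every $\sigma \in I$ to have at least $k$ fixed points and recasts the hypothesis as: for every $k$-subset $S \subset [n]$ there exists some $\pi \in I$ that does not fix all of $S$ pointwise. Then I would choose a $k$-coset $J$ maximising $|I \cap J|$; after relabelling I may take $J = \{\sigma : \sigma(i)=i\ \forall i \leq k\}$, and pick $\pi \in I \setminus J$, with $R = \{i \leq k : \pi(i) \neq i\} \neq \emptyset$.

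For the main count, every $\sigma \in I \cap J$ fixes $[k]$ and so agrees with $\pi$ exactly on $[k] \setminus R$ among the first $k$ coordinates; to $k$-intersect $\pi$, it must therefore agree with $\pi$ on at least $|R|$ further coordinates in $\{k+1,\ldots,n\}$. The goal is to reduce to the case where $\pi$ is a transposition $(i_0,j_0)$ with $i_0 \leq k < j_0$: in all other cases either $|R|\geq 2$ or $\pi$ has fewer than $n-2$ fixed points, and the agreement condition then forces $|I \cap J|$ to be $o((n-k)!)$, which is even stronger than needed. In the transposition case $|R|=1$ and each $\sigma\in I\cap J$ must fix at least one point of the $(n-k-1)$-set $\{k+1,\ldots,n\}\setminus\{j_0\}$; a standard inclusion--exclusion shows that the number of permutations of $\{k+1,\ldots,n\}$ with this property is $(1-1/e+o(1))(n-k)!$, which gives the desired bound on $|I \cap J|$.

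It then remains to bound $|I \setminus J|$. Each $\pi \in I \setminus J$ has at least $k$ fixed points, so lies in some other $k$-coset $J_\pi$; by maximality of $J$, $|I \cap J_\pi| \leq |I \cap J|$. I would then group the relevant cosets $J_\pi$ by their ``distance'' from $J$ (the number of coordinates in which the defining constraints differ) and combine this with the main upper bound to obtain $|I \setminus J| = o((n-k)!)$. Finally, the $k$ transpositions $(i,k+1)$, $i\leq k$, show that the extremal bound is attained by the family in the statement.

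The hardest step, I expect, is precisely the estimate on $|I \setminus J|$: elements outside $J$ can lie in many different $k$-cosets, so a crude union bound loses too much. The technical core should be a second-eigenvalue stability statement for the Cayley graph encoding ``failure to $k$-intersect'' --- roughly, that a near-extremal Boolean $k$-intersecting family has its Fourier transform concentrated on the low-dimensional irreducible representations of $S_n$, and that this spectral concentration forces genuine combinatorial closeness to a single $k$-coset, contradicting the hypothesis that $I$ is not contained in any such coset.
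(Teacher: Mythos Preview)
This theorem is not proved in the present paper; it is merely quoted from \cite{kstability} as background, so there is no proof here to compare your proposal against.

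On the proposal itself: the overall architecture---normalise so that $\mathrm{id}\in I$, choose the $k$-coset $J$ maximising $|I\cap J|$, pick a witness $\pi\in I\setminus J$, and bound $|I\cap J|$ and $|I\setminus J|$ separately---is indeed the shape of the argument in \cite{kstability}. But two steps in your sketch are not correct as stated. First, the assertion that whenever $\pi$ is not a transposition one gets $|I\cap J|=o((n-k)!)$ is false. If $|R|=2$, the constraint on $\sigma\in I\cap J$ is to agree with $\pi$ in at least two places among $\{k+1,\dots,n\}$, and the number of such $\sigma$ is asymptotically $(1-2/e)(n-k)!$, which is $\Theta((n-k)!)$, not $o((n-k)!)$; similarly, if $|R|=1$ but $\pi$ moves several points above $k$, the requirement is still only a single agreement above $k$, again giving $\Theta((n-k)!)$. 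So the reduction to the transposition case does not come for free. Second---and this is where the real difficulty lies---the bound $|I\setminus J|=o((n-k)!)$ cannot be obtained by grouping stray $k$-cosets and applying the crude upper bound, because there are $\Theta(n)$ neighbouring $k$-cosets and a union bound overshoots by a factor of $n$. The actual proof in \cite{kstability} does pass through a spectral stability statement of the kind you allude to, but turning ``$1_I$ is $L^2$-close to $V_k$'' into ``$I$ is close to a single $k$-coset as a set'' is itself a substantial combinatorial argument that your sketch does not supply.
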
 This may be seen as an analogue of the Hilton-Milner theorem \cite{hiltonmilner} on intersecting families of $r$-sets; the \(k=1\) case was conjectured by Cameron and Ku in \cite{cameron-ku}.

When there exists a sharply \(k\)-transitive subset of \(S_n\), the `partitioning' argument in the above proof of Theorem \ref{one-intersecting} can be replaced by a Katona-type averaging argument, proving Conjecture \ref{conj:deza-frankl} in this case.

[Recall that a subset $T \subset S_{n}$ is said to be $k$-\textit{transitive} if for any distinct $i_{1},\ldots,i_{k} \in [n]$
and any distinct $j_{1},\ldots,j_{k} \in [n]$, there exists $\sigma \in T$ such that $\sigma(i_{t}) = j_{t}$ for each \(t \in [k]\); \(T\) it is said to be \textit{sharply} $k$-\textit{transitive} if there exists a unique such $\sigma \in T$. Note that a $k$-transitive subset $T \subset S_{n}$ is sharply $k$-transitive if and only if it has size $n(n-1) \ldots (n-k+1)$.]

Indeed, suppose that $S_{n}$ has a sharply $k$-transitive subset $T$. Then any left translate $\sigma T$ of $T$ is also sharply $k$-transitive, so any two distinct permutations
in $\sigma T$ agree in at most $k-1$ places. Let \(I \subset S_n\) be \(k\)-intersecting; then $|I \cap \sigma T| \leq 1$ for each \(\sigma \in S_n\). Averaging over all \(\sigma \in S_n\) gives $|I| \leq (n-k)!$.

For $k=2$ and $n=q$ a prime power, $S_{n}$ has a sharply 2-transitive \textit{subgroup}:
identify the ground set with the finite field $\mathbb{F}_{q}$ of order $q$, and take $H$ to be the group
of all affine maps $x \mapsto ax + b$ ($a \in \mathbb{F}_{q}\setminus \{0\},\ b \in \mathbb{F}_{q}$).
Any two distinct permutations in $H$ agree in at most 1 point, and the same is true for any left coset of \(H\). So if \(I \subset S_n\) is 2-intersecting, then \(I\) contains at most one permutation from each left coset of \(H\). Since the \((n-2)!\) left cosets of $H$ partition $S_{n}$, this implies that $|I| \leq (n-2)!$.

Similarly, for $k = 3$ and $n = q+1$ (where $q$ is a prime power), $S_{n}$ has a sharply 3-transitive subgroup:
identify the ground set with $\mathbb{F}_{q} \cup \{\infty\}$, and take $H$ to be the group of all M\"obius
transformations \[x \mapsto \frac{ax+b}{cx+d} \quad (a,b,c,d \in \mathbb{F}_{q},\ ad-bc \neq 0).\]
However, it is a classical result of C. Jordan \cite{jordan} that the only sharply $k$-transitive permutation groups for
 $k \geq 4$ are $S_{k}$ (for $k \geq 4$), $A_{k-2}$ (for $k \geq 8$), $M_{11}$ (for $k=4$) and
 $M_{12}$ (for $k = 5$), where $M_{11},M_{12}$ are the Matthieu groups.
 Moreover, sharply $k$-transitive subsets of $S_{n}$ have not been found for any other values of $n$ and $k$.
 Thus, it seems unlikely that this approach can work in general. Instead, we will use a different approach.

Recall that if \(G\) is a group and \(X \subset G\) is inverse-closed, the {\em Cayley graph on} \(G\) {\em generated by} \(X\) is the graph with vertex-set \(G\) and edge-set \(\{\{g,h\}:\ g h^{-1} \in X\}\). Let \(\Gamma_1\) be the Cayley graph on \(S_n\) generated by the set of {\em fixed-point-free} permutations,
\[\textrm{FPF} = \{\sigma \in S_n:\ \sigma(i) \neq i\ \forall i \in [n]\}.\]
Note that a 1-intersecting subset of \(S_n\) is precisely an independent set in \(\Gamma_1\). It turns out that calculating the least eigenvalue of \(\Gamma_1\) (meaning the least eigenvalue of its adjacency matrix) and applying Hoffman's bound (Theorem \ref{thm:hoffman}) yields an alternative proof of Theorem \ref{one-intersecting}. Calculating the least eigenvalue of \(\Gamma_1\) is non-trivial, requiring use of the representation theory of \(S_n\). It was first done by Renteln \cite{renteln}, using symmetric functions, and independently and slightly later by Friedgut and Pilpel \cite{friedguttalk}, and also by Godsil and Meagher \cite{godsil-2008}. As observed in \cite{friedguttalk} and \cite{godsil-2008}, this also leads to an alternative proof that the \(1\)-cosets are the unique largest 1-intersecting subsets.

The obvious generalization of this approach fails for \(k\)-intersecting subsets of \(S_n\). Let \(\Gamma_{k}\) be the Cayley graph on \(S_n\) generated by the set
\[\textrm{FPF}_k = \{\sigma \in S_n:\ \sigma \textrm{ has less than }k\textrm{ fixed points}\}.\]
A \(k\)-intersecting subset of \(S_n\) is precisely an independent set in \(\Gamma_k\), so our task is to find the largest independent sets in \(\Gamma_k\). Unfortunately, for \(k\) fixed and \(n\) large, calculating the least eigenvalue of \(\Gamma_k\) and applying Hoffman's bound only gives an upper bound of \(\Theta((n-1)!)\) on the size of a \(k\)-intersecting family.

A key idea of our proof is to choose various subgraphs of \(\Gamma_k\), and to construct a `pseudo-adjacency matrix' \(A\) for \(\Gamma_k\) which is a suitable real linear combination of the adjacency matrices of these subgraphs. We then apply a weighted version of Hoffman's Theorem (Theorem \ref{thm:hoffman-linear}) to this linear combination, in order to prove the upper bound in Conjecture \ref{conj:deza-frankl}. The subgraphs chosen will be Cayley graphs generated by various unions of conjugacy-classes of \(S_n\); this will enable use to calculate their eigenvalues using the representation theory of \(S_n\). Most of the work of the proof is in showing that an appropriate linear combination exists.

\subsection{Our main results}
A remark on terminology: we will often identify a subset $A$ of $S_n$, its
characteristic function $1_A : S_n \rightarrow \{0,1\}$, and its
characteristic vector $v_A \in \mathbb{R}^{|S_n|}$; so when we say
that a set $A$ is spanned by sets $B_1, \ldots, B_t$, we mean that
$v_A \in \Span\{v_{B_1}, \ldots, v_{B_t} \}$.

Our main results in this paper are as follows.
\begin{theorem} \label{thm:final-result}
  For any \(k \in \mathbb{N}\), and any \(n\) sufficiently large depending on \(k\), if \(I \subset S_n\) is \(k\)-intersecting, then \(|I| \leq (n-k)!\). Equality holds if and only if \(I\) is a \(k\)-coset of \(S_n\).
\end{theorem}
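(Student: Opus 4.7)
The plan is to follow the eigenvalue-based approach sketched in the introduction: a $k$-intersecting family $I \subset S_n$ is precisely an independent set in the Cayley graph $\Gamma_k = \Cay(S_n, \fpf_k)$, so I will bound the independence number of $\Gamma_k$ using a weighted version of Hoffman's ratio bound. Since a single Hoffman bound on $\Gamma_k$ itself only yields $\Theta((n-1)!)$, I will instead construct a ``pseudo-adjacency matrix'' $A = \sum_{\alpha} c_\alpha M_\alpha$, where the sum runs over conjugacy classes $C_\alpha$ of permutations with fewer than $k$ fixed points, $M_\alpha$ is the adjacency matrix of $\Cay(S_n, C_\alpha)$, and the $c_\alpha \in \mathbb{R}$ are real weights to be chosen. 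Because $A$ lies in the group algebra (it is a class function acting by right multiplication), its eigenvalues are constant on each isotypic component and are given by normalized characters: on the isotypic component labelled by the partition $\lambda \vdash n$, the eigenvalue is $\mu_\lambda = \sum_\alpha c_\alpha |C_\alpha|\chi_\lambda(C_\alpha)/\dim(\lambda)$. The support condition $A_{\sigma,\tau}=0$ whenever $\sigma,\tau$ $k$-intersect is automatic.

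The upper bound $|I| \leq (n-k)!$ will follow from the weighted Hoffman bound (Theorem \ref{thm:hoffman-linear}) once I can choose weights $c_\alpha$ so that
\[
\frac{-\mu_{\min}(A)}{\mu_{\max}(A) - \mu_{\min}(A)} = \frac{(n-k)!}{n!} = \frac{1}{n(n-1)\cdots(n-k+1)}.
\]
The natural target is to arrange that the largest eigenvalue $\mu_{\max}$ is attained exactly on the trivial representation (the all-ones vector), and that the minimum eigenvalue $\mu_{\min}$ is attained precisely on the isotypic components indexed by the partitions $\lambda = (n-k, \lambda_2, \lambda_3, \ldots)$ with $\lambda_1 = n-k$ — these are exactly the irreducibles that appear in the permutation representation of $S_n$ on ordered $k$-tuples of distinct elements, and hence on the span of the characteristic vectors of the $k$-cosets. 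This will simultaneously ensure tightness and prepare the ground for the equality analysis.

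The heart of the proof — and the main obstacle — will therefore be the existence of such a weight vector. I will set this up as a linear system: the unknowns are the $c_\alpha$, indexed by conjugacy classes inside $\fpf_k$, and the constraints specify the desired value of $\mu_\lambda$ for each small-``width'' partition $\lambda$ (those with $n-\lambda_1 \leq k$ or comparably small), together with the requirement $\mu_\lambda \geq \mu_{\min}$ for all other $\lambda$. The first task is dimension counting: the number of partitions $\lambda$ with $n - \lambda_1 \leq k$ equals the number of partitions of integers up to $k$, which matches the number of cycle types of permutations having fewer than $k$ fixed points (after restricting to the relevant ``shape''); this gives a square-looking system. Solving it amounts to inverting a character-value matrix indexed by such partitions, and I will use the asymptotic formulas for normalized characters $\chi_\lambda(\sigma)/\dim(\lambda)$ when $\lambda$ has large first row (via e.g.\ the Frobenius formula or Roichman/Vershik--Okounkov estimates), which are small for most classes, to show the system has a solution with $\mu_\lambda$ in the desired range for all $\lambda$ with $n$ large.

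Given the upper bound, the equality case is handled as follows. If $|I| = (n-k)!$, the Hoffman bound forces the centered characteristic vector $1_I - |I|/n!$ to lie in the direct sum of the $\mu_{\min}$-eigenspaces of $A$, which by the construction above is exactly the span $U_k$ of characteristic vectors of $k$-cosets. Thus $1_I \in U_k$. I would then invoke the generalization of Birkhoff's theorem (Theorem \ref{thm:genbirkhoff}) mentioned in the introduction: any $\{0,1\}$-valued function in $U_k$ is a disjoint union of $k$-cosets, and a $k$-intersecting such family of size $(n-k)!$ must be a single $k$-coset. Alternatively one may finish via the cited stability theorem, which rules out any $k$-intersecting family not contained in a $k$-coset having size more than $(1-1/e+o(1))(n-k)!$. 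The delicate part throughout is controlling the irreducible characters on cycle types containing few fixed points, so that the weights $c_\alpha$ produce both the correct ratio and the correct identification of the extremal eigenspace.
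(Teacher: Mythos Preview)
Your overall strategy matches the paper's: build a pseudo-adjacency matrix $A=\sum_\alpha c_\alpha M_\alpha$ supported on conjugacy classes inside $\fpf_k$, tune the weights so the Hoffman ratio equals $(n-k)!/n!$, and identify the minimum-eigenspace with $V_k$ so that the generalized Birkhoff theorem finishes the equality case. However, your plan has one genuine gap that would cause the argument to fail as stated.

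The issue is the \emph{tall} representations, i.e.\ the transposes $\lambda^t$ of the fat partitions. You propose to force $\mu_\lambda=\omega$ for the fat $\lambda\neq(n)$ and then control every remaining $\lambda$ by the heuristic that normalized characters $\chi_\lambda(C_\alpha)/\dim(\lambda)$ are small when $\dim(\lambda)$ is large. But $\dim[\lambda^t]=\dim[\lambda]$, so the tall representations have the \emph{same} (small) dimensions as the fat ones, and no character-asymptotic or dimension bound will make their eigenvalues small. Concretely, since $\chi_{\lambda^t}=\chi_\lambda\cdot\mathrm{sgn}$, any weight vector supported on a fixed collection of conjugacy classes gives $\mu_{\lambda^t}=\pm\mu_\lambda$ on each class according to its sign; in particular, a generic choice will produce $|\mu_{(1^n)}|=|\mu_{(n)}|=1$, destroying the Hoffman ratio. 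The paper resolves this by a parity trick you do not mention: it solves the linear system twice, once over \emph{even} conjugacy classes (using the $\mathrm{Split}$ operation to swap parity when needed) and once over \emph{odd} ones, obtaining $Y_{\mathrm{even}}$ and $Y_{\mathrm{odd}}$ with $\mu_{\lambda^t}=\mu_\lambda$ and $\mu_{\lambda^t}=-\mu_\lambda$ respectively, and then sets $Y=\tfrac12(Y_{\mathrm{even}}+Y_{\mathrm{odd}})$ so that every tall eigenvalue vanishes. Only after this cancellation do the dimension bounds (your ``asymptotic formulas'') handle the remaining \emph{medium} partitions.

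Two smaller points. First, the target minimum-eigenspace consists of all fat $\lambda$ with $\lambda_1\ge n-k$ (equivalently $\lambda\ge(n-k,1^k)$), not just those with $\lambda_1=n-k$; these are precisely the constituents of $M^{(n-k,1^k)}$, as you correctly note afterwards. Second, the linear system is not ``square'' against all of $\fpf_k$: the paper selects exactly $q_k-1$ specific conjugacy classes (those with cycle type $>(n-k,1^k)$, or their splits) and shows the resulting $(q_k-1)\times(q_k-1)$ character minor is invertible and independent of $n$; this is what makes the coefficient bounds uniform in $n$.
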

We also prove a cross-intersecting version of this theorem:
\begin{definition}
  Two sets $I,J \subset S_n$ are {\em $k$-cross-intersecting} if
  every permutation in \(I\) \(k\)-intersects every permutation in \(J\).
\end{definition}

\begin{theorem}\label{thm:X-main}
 For any $k \in \mathbb{N}$ and any \(n\) sufficiently large depending on \(k\), if $I, J \subset S_n$
are $k$-cross-intersecting, then $|I||J| \leq ((n-k)!)^2$. Equality holds if and only if $I=J$ and $I$ is a $k$-coset of \(S_n\).
\end{theorem}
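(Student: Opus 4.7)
The plan is to mimic the proof of Theorem~\ref{thm:final-result}, replacing the independent-set Hoffman bound by its cross-independent analogue. Let $A$ be the weighted pseudo-adjacency matrix of $\Gamma_k$ built in the proof of Theorem~\ref{thm:final-result}: it is real symmetric and $S_n$-invariant, has $A\mathbf{1} = d\mathbf{1}$ with $d>0$, its least eigenvalue is $-\mu$ with $\mu/(d+\mu) = (n-k)!/n!$, and $A_{\sigma,\tau}=0$ whenever $\sigma,\tau$ $k$-intersect. I will also need the stronger spectral fact that every non-principal eigenvalue of $A$ lies in $[-\mu,\mu]$; I expect this to drop out of the same explicit eigenvalue computation, via the representation theory of $S_n$, that underlies the proof of Theorem~\ref{thm:final-result}.

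Write $N=n!$, $\alpha=|I|/N$, $\beta=|J|/N$, $f_I = \mathbf{1}_I - \alpha\mathbf{1}$, $f_J = \mathbf{1}_J - \beta\mathbf{1}$, so $f_I,f_J \perp \mathbf{1}$ with $\|f_I\|^{2}=N\alpha(1-\alpha)$ and $\|f_J\|^{2}=N\beta(1-\beta)$. The cross-intersection hypothesis forces $\mathbf{1}_I^{T}A\mathbf{1}_J = 0$, which expands to the identity
\[
  \alpha\beta\, dN \;+\; f_I^{T}A f_J \;=\; 0.
\]
Combining $|f_I^{T}A f_J| \le \mu\|f_I\|\|f_J\| = \mu N\sqrt{\alpha(1-\alpha)\beta(1-\beta)}$ with the elementary bound $\sqrt{(1-\alpha)(1-\beta)} \le 1 - \sqrt{\alpha\beta}$ (equivalent to AM--GM) rearranges to
\[
  \sqrt{\alpha\beta} \;\le\; \frac{\mu}{d+\mu} \;=\; \frac{(n-k)!}{n!},
\]
i.e., $|I||J| \le ((n-k)!)^{2}$.

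For the equality statement, equality in the AM--GM step forces $\alpha=\beta$, so $|I|=|J|=(n-k)!$. Equality in the spectral bound forces $f_I,f_J$ to lie in the $(-\mu)$-eigenspace of $A$; on that eigenspace one has $f_I^{T}Af_J = -\mu f_I^{T}f_J$, and since $f_I^{T}Af_J = -\alpha\beta dN < 0$ we deduce $f_I^{T}f_J = \|f_I\|\|f_J\|$, hence $f_I = f_J$ and so $I=J$. Then $I$ is itself a $k$-intersecting family of extremal size $(n-k)!$, and Theorem~\ref{thm:final-result} identifies $I=J$ as a $k$-coset of $S_n$.

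The chief obstacle is the extra spectral input demanded in the very first step: not merely that the least eigenvalue of $A$ is $-\mu$ (which is all that Hoffman's independent-set bound consumes in the proof of Theorem~\ref{thm:final-result}), but that the full non-principal spectrum is contained in $[-\mu,\mu]$. Should the construction of $A$ only guarantee the one-sided bound, I would fall back on the relaxation $A+\mu I \succeq 0$ on $\mathbf{1}^{\perp}$ and the positive-semidefinite Cauchy--Schwarz inequality $(f_I^{T}(A+\mu I)f_J)^{2} \le f_I^{T}(A+\mu I)f_I \cdot f_J^{T}(A+\mu I)f_J$, absorbing the spurious $\mu f_I^{T}f_J$ cross-term through a slightly refined AM--GM step. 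In either case, all of the substantive representation-theoretic content is imported directly from the proof of Theorem~\ref{thm:final-result}.
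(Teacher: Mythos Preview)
Your proposal is correct and follows essentially the same route as the paper: the paper builds the pseudo-adjacency matrix \(Y\) (your \(A\)) and explicitly verifies that \(\omega=-\mu\) is simultaneously the minimum eigenvalue and the one of second-largest absolute value (Theorem~\ref{thm:y-is-ok} and Table~1), then applies the cross-independent Hoffman bound (Theorem~\ref{thm:Xhoffman}). Your equality analysis, extracting \(I=J\) directly from the Cauchy--Schwarz equality case before invoking Theorem~\ref{thm:final-result}, is a slight streamlining over the paper, which instead passes through Theorems~\ref{thm:cosets-span} and~\ref{thm:span-is-disjoint-union} to identify \(I\) and \(J\) individually as \(k\)-cosets.
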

The \(k=1\) case of the above was a conjecture of Leader \cite{leader-private-2008}.

Our argument proceeds in the following steps. (In order to
not disrupt the flow of the paper, some of the
representation-theoretic terms used will only be defined later.)

First, we bound the size of a $k$-intersecting family.
\begin{theorem} \label{main-theorem}
  For any $k \in \mathbb{N}$ and any \(n\) sufficiently large depending on $k$, if $I \subset S_n$
  is $k$-intersecting, then $|I| \leq (n-k)!$. Moreover, if $I,J \subset S_n$ are $k$-cross-intersecting, then
 $|I||J| \leq ((n-k)!)^{2}$.
\end{theorem}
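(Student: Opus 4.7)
My plan is to apply a weighted version of the Hoffman bound (Theorem \ref{thm:hoffman-linear}) to the Cayley graph $\Gamma_k$ on $S_n$ generated by $\fpf_k$. A $k$-intersecting family is an independent set in $\Gamma_k$, and a pair $I,J$ of $k$-cross-intersecting families satisfies $\mathbf{1}_I^T A \mathbf{1}_J = 0$ for every symmetric matrix $A$ supported on the edge set of $\Gamma_k$. Both inequalities will thus follow from a careful construction of a pseudo-adjacency matrix
\[A \;=\; \sum_\alpha c_\alpha M_\alpha,\]
where $\alpha$ ranges over the conjugacy classes $C_\alpha \subseteq \fpf_k$ and $M_\alpha$ is the adjacency matrix of the Cayley graph on $S_n$ generated by $C_\alpha$.

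Each $M_\alpha$ commutes with the left and right regular actions of $S_n$, so $A$ is simultaneously diagonalized by the isotypic decomposition $\mathbb{C}[S_n] = \bigoplus_{\lambda \vdash n} V_\lambda$, acting on $V_\lambda$ as the scalar
\[\mu_\lambda \;=\; \sum_\alpha c_\alpha\,|C_\alpha|\,\frac{\chi_\lambda(C_\alpha)}{\dim \lambda}.\]
The indicators of $k$-cosets span $\bigoplus_{\lambda_1 \geq n-k} V_\lambda$, so writing $K = n(n-1)\cdots(n-k+1)$, I would aim to engineer the $c_\alpha$ so that: (i) $\mu_{(n)} = d$, the ``degree''; (ii) $\mu_\lambda = -d/(K-1)$ for every $\lambda \neq (n)$ with $\lambda_1 \geq n-k$; (iii) $|\mu_\lambda| \leq d/(K-1)$ for every $\lambda$ with $\lambda_1 < n-k$. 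Given such $A$, the Hoffman bound applied to $\langle \mathbf{1}_I, A\mathbf{1}_I\rangle = 0$ yields $|I| \leq n!/K = (n-k)!$; and expanding $\langle \mathbf{1}_I, A\mathbf{1}_J\rangle = 0$ across the isotypic components, using Cauchy--Schwarz together with (iii), yields $|I||J| \leq ((n-k)!)^2$ after solving a short quadratic inequality.

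The heart of the proof is producing the coefficients $c_\alpha$. Conditions (i) and (ii) form a finite linear system with at most $\sum_{j=0}^{k} p(j)$ equations, a number independent of $n$, while the number of conjugacy classes in $\fpf_k$ grows without bound with $n$. So once I compute the relevant character values on $\fpf_k$ (via the Frobenius formula or the Murnaghan--Nakayama rule), solving (i)--(ii) is a matter of explicit linear algebra. The principal obstacle, and what I expect to be the technical heart of the proof, is condition (iii): for each ``large'' partition $\lambda$ (one with $\lambda_1 < n-k$), the normalized character ratios $\chi_\lambda(C_\alpha)/\dim \lambda$ must obey uniform bounds—in the spirit of Roichman's or Fomin--Lulov's character-ratio estimates—so that the weighted sum $\mu_\lambda$ stays within $[-d/(K-1), d/(K-1)]$. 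The $c_\alpha$ must therefore be chosen not merely to solve the linear system (i)--(ii) but also so that the contributions to $\mu_\lambda$ from large $\lambda$ decay as $n \to \infty$; this delicate balancing act, rather than any single eigenvalue computation, is what will power the whole argument.
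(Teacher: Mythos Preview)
Your overall framework matches the paper's: build a pseudo-adjacency matrix as a real linear combination of Cayley graphs on conjugacy classes inside \(\fpf_k\), arrange the eigenvalues so that \(\mu_{(n)}=d\) and \(\mu_\lambda=-d/(K-1)\) for the ``fat'' partitions \(\lambda\neq(n)\) with \(\lambda_1\geq n-k\), and then apply the weighted Hoffman and cross-Hoffman bounds. The linear-algebra step and the cross-intersecting argument are essentially as in the paper.

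There is, however, a genuine gap in your handling of condition (iii). You treat every \(\lambda\) with \(\lambda_1<n-k\) uniformly via character-ratio bounds, but this fails for the \emph{tall} partitions, those with first column of length \(\geq n-k\). Since \(\chi_{\lambda^t}=\chi_\lambda\cdot\sgn\) and \(\dim[\lambda^t]=\dim[\lambda]\), the character ratio \(\chi_{\lambda^t}(C_\alpha)/\dim[\lambda^t]\) equals \(\pm\chi_\lambda(C_\alpha)/\dim[\lambda]\); in particular, for \(\lambda=(1^n)\) you get \(\mu_{(1^n)}=\sum_\alpha c_\alpha|C_\alpha|\,\sgn(C_\alpha)\), which has the same order of magnitude as \(d\) itself, not \(d/(K-1)\). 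No Roichman-- or Fomin--Lulov--type estimate can help here, because these partitions have dimension \(O(n^k)\), not \(\Omega(n^{k+1})\). The paper resolves this by building \emph{two} linear combinations \(\yeven\) and \(\yodd\), supported on even and odd conjugacy classes respectively (using a ``Split'' operation to flip the parity of a class when needed), each satisfying (i)--(ii); then \(Y=\tfrac12(\yeven+\yodd)\) has \(\mu_{\lambda^t}=\tfrac12(\mu_\lambda-\mu_\lambda)=0\) on every tall \(\lambda\). Only the genuinely ``medium'' partitions (neither fat nor tall) are handled by dimension growth, via \(\dim[\lambda]\geq E_k n^{k+1}\) combined with a Cauchy--Schwarz bound \(|\lambda_\rho|\leq\sqrt{|G||X|}/\dim[\rho]\). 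Without this parity trick, your condition (iii) cannot be met.
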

Next, we describe the Fourier transform of the characteristic functions of the families which
achieve this bound.  Let $V_k$ be the linear subspace of real-valued functions on
$S_n$ whose Fourier transform is supported on irreducible
representations corresponding to partitions $\mu$ of $n$ such that
$\mu \geq (n-k,1^k)$, where \(\geq\) denotes the lexicographic order (see section \ref{sec:rep-sn}).
\begin{theorem} \label{thm:span-is-maximal}
  For $k$ fixed and \(n\) sufficiently large depending on \(k\), if $I \subset S_n$ is
  a $k$-intersecting family of size $(n-k)!$, then $1_I \in V_k$.
  Furthermore if $I,J\subset S_n$ are $k$-cross-intersecting and $|I||J|=(n-k)!^2$,
  then $1_I, 1_J \in V_k$.
\end{theorem}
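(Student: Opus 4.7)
My plan is to deduce Theorem \ref{thm:span-is-maximal} from the equality case of the weighted Hoffman-type spectral bound used to prove Theorem \ref{main-theorem}. That proof constructs a pseudo-adjacency matrix \(A\) for \(\Gamma_k\) as a real linear combination \(A = \sum_{C} c_C A_C\) of adjacency matrices of Cayley graphs generated by conjugacy classes \(C \subset \fpf_k\). Since each \(A_C\) is a central element of \(\mathbb{R}[S_n]\), the \(\mu\)-isotypic components of the regular representation are eigenspaces of \(A\), with eigenvalue
\[
\lambda_\mu \;=\; \sum_C c_C\,|C|\,\chi_\mu(C)/\dim \rho_\mu.
\]
The coefficients \(c_C\) are chosen so that \(A\) is a valid pseudo-adjacency matrix with common row-sum \(d = \lambda_{(n)}\) (its maximum eigenvalue), with Hoffman ratio \(-\lambda_{\min}/(d-\lambda_{\min}) = (n-k)!/n!\), and, crucially, with \(\lambda_\mu = \lambda_{\min}\) precisely when \(\mu \geq (n-k,1^k)\) in lex and \(\mu \neq (n)\), while \(\lambda_\mu\) is strictly larger for every other \(\mu\). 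Consequently the \(\lambda_{\min}\)-eigenspace of \(A\) is exactly the direct sum of the \(\mu\)-isotypic components for \(\mu \in V_k \setminus \{(n)\}\).

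Given this setup, the equality case of the weighted Hoffman bound (Theorem \ref{thm:hoffman-linear}) does the rest. For a \(k\)-intersecting family \(I\) of size \((n-k)!\), tightness forces \(1_I - (|I|/n!)\mathbf{1}\) to lie entirely in the \(\lambda_{\min}\)-eigenspace of \(A\). Adding back the constant \((|I|/n!)\mathbf{1}\) (which lies in the trivial \((n)\)-isotypic, itself inside \(V_k\)) yields \(1_I \in V_k\).

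For the cross-intersecting version, I apply the same spectral machinery to the bilinear form \(\langle 1_I, A\,1_J\rangle\). Since \(A\) is supported on edges of \(\Gamma_k\) and no edge joins \(I\) to \(J\), this form vanishes. Writing \(a_I := 1_I - (|I|/n!)\mathbf{1}\) and similarly \(a_J\), the vanishing rearranges to \(\langle a_I, A a_J\rangle = -d|I||J|/n!\); bounding the left side by the spectral radius of \(A\) on the orthogonal complement of constants and applying Cauchy--Schwarz together with \(\|a_I\|^2 = |I|(1 - |I|/n!)\) yields \(|I||J| \leq ((n-k)!)^2\). Tightness in each inequality pins both \(a_I\) and \(a_J\) to the \(\lambda_{\min}\)-eigenspace of \(A\), so that both \(1_I\) and \(1_J\) lie in \(V_k\).

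The main obstacle---and the real substance---is producing the matrix \(A\) with the spectral properties above. Guaranteeing that \(\lambda_\mu\) attains its minimum precisely on the partitions with \(\mu_1 \geq n-k\), and controlling the second-largest \(|\lambda_\mu|\) well enough for the cross-intersecting estimate, requires uniform asymptotic control of the normalized characters \(\chi_\mu(C)/\dim \rho_\mu\) across all \(\mu \vdash n\) and all conjugacy classes \(C \subset \fpf_k\). That delicate representation-theoretic analysis is carried out in the proof of Theorem \ref{main-theorem}; once it is in hand, Theorem \ref{thm:span-is-maximal} is an immediate corollary via the standard equality analysis sketched above.
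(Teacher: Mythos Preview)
Your proposal is correct and follows essentially the same route as the paper: construct the pseudo-adjacency matrix \(Y\) (your \(A\)) as a real linear combination of Cayley graphs on conjugacy classes in \(\fpf_k\), arrange its spectrum so that \(\lambda_{(n)}=1\), \(\lambda_\mu=\omega\) exactly for the fat \(\mu\neq(n)\), and \(|\lambda_\mu|<|\omega|\) otherwise, and then read off Theorem~\ref{thm:span-is-maximal} from the equality cases of Theorem~\ref{thm:hoffman-linear} and Theorem~\ref{thm:Xhoffman}. One small point to tighten in your cross-intersecting sketch: tightness in the bilinear argument only pins \(a_I,a_J\) to the span of eigenvectors with \(|\lambda_i|=|\lambda_2|\), not automatically to the \(\lambda_{\min}\)-eigenspace; you need the additional observation (which the paper's construction guarantees, since the tall eigenvalues are \(0\) and the medium ones have modulus \(<|\omega|\)) that no eigenvalue equals \(+|\omega|\), so that this span coincides with the \(\omega\)-eigenspace.
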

We then prove the following:
\begin{theorem} \label{thm:cosets-span}
  $V_k$ is spanned by the characteristic functions of the $k$-cosets
  of $S_n$.
\end{theorem}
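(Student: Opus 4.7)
The plan is to identify both $V_k$ and
\[
W := \Span\{1_T : T \text{ a } k\text{-coset of } S_n\}
\]
as sums of $(S_n \times S_n)$-isotypic components of $\mathbb{R}[S_n]$ under the two-sided regular action, then match them block by block. Note that the defining condition $\mu \geq (n-k,1^k)$ in lex order is equivalent to $\mu_1 \geq n-k$, which I shall use throughout.

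\textbf{Step 1 (bimodule setup).} Under the two-sided action, $\mathbb{R}[S_n] = \bigoplus_\mu M_\mu$ with each $M_\mu$ an irreducible $(S_n \times S_n)$-module (the span of matrix coefficients of the irrep indexed by $\mu$), and the summands pairwise non-isomorphic. Every $(S_n \times S_n)$-invariant subspace is therefore a direct sum of certain $M_\mu$'s. By definition $V_k = \bigoplus_{\mu_1 \geq n-k} M_\mu$. For $W$, left translation by $\pi$ and right translation by $\sigma^{-1}$ send $T_{i_1 \mapsto j_1, \ldots, i_k \mapsto j_k}$ to the $k$-coset $T_{\sigma(i_1) \mapsto \pi(j_1), \ldots, \sigma(i_k) \mapsto \pi(j_k)}$, so $W$ is $(S_n \times S_n)$-invariant and $W = \bigoplus_{\mu \in J} M_\mu$ for some set $J$. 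The goal becomes showing $J = \{\mu : \mu_1 \geq n-k\}$. A key observation used twice below: $M_\mu$ is simultaneously the left-isotypic component of type $\mu$, so membership of $\mu$ in $J$ can be detected by left-projections.

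\textbf{Step 2 ($W \subseteq V_k$).} Since $V_k$ is $(S_n \times S_n)$-invariant it suffices to place a single $k$-coset indicator in $V_k$. Let $H \leq S_n$ be the pointwise stabilizer of $\{1,\ldots,k\}$. Then $1_H$ is right-invariant under $H$, so it lies in the subspace $U$ of right-$H$-invariant functions on $S_n$, which as a left $S_n$-module is isomorphic to $\mathrm{Ind}_H^{S_n}\mathbf{1}$. By Frobenius reciprocity and $k$ iterations of the branching rule, the multiplicity of the Specht module $S^\mu$ in $U$ equals the number of ways to add $k$ boxes one at a time to the row shape $(n-k)$; this is positive exactly when $\mu_1 \geq n-k$. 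Hence $U$, and in particular $1_H$, is contained in the sum of left-isotypics $\{M_\mu : \mu_1 \geq n-k\} = V_k$, so $W \subseteq V_k$.

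\textbf{Step 3 ($V_k \subseteq W$).} Every left coset $\pi H$ is the $k$-coset $T_{1 \mapsto \pi(1), \ldots, k \mapsto \pi(k)}$, so $U \subseteq W$. For each $\mu$ with $\mu_1 \geq n-k$, Step 2 shows $U$ contains a non-zero copy of $S^\mu$ as a left submodule, and this copy sits inside the left-isotypic $M_\mu$. Therefore $W \cap M_\mu \neq 0$; but $W \cap M_\mu$ is $(S_n \times S_n)$-invariant inside the irreducible $M_\mu$, forcing $W \cap M_\mu = M_\mu$, i.e.\ $\mu \in J$. Thus $\{\mu : \mu_1 \geq n-k\} \subseteq J$ and $V_k \subseteq W$.

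The only real obstacle is bookkeeping: one must keep straight the three actions (left $S_n$, right $S_n$, and the $(S_n \times S_n)$-bimodule) and exploit the coincidence of left-isotypic and bimodule-isotypic components at the crucial moment. All representation-theoretic input is standard — Frobenius reciprocity together with the branching rule for $S_n \supset S_{n-1} \supset \cdots \supset S_{n-k}$ — so I do not anticipate any essentially new combinatorial difficulty.
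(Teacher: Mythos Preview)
Your proof is correct. Both inclusions are sound, and the bimodule argument in Step~3 is airtight: since each $M_\mu$ is irreducible under the two-sided action, a non-zero $(S_n\times S_n)$-invariant intersection forces $M_\mu\subseteq W$.

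The forward inclusion ($W\subseteq V_k$) is essentially the paper's argument in different clothing: the paper reduces to $1_H$ with $H=S_{n-k}$, then uses the branching rule to show that whenever $\rho$ has more than $k$ cells outside its first row, every constituent of $[\rho]\!\downarrow\! S_{n-k}$ is non-trivial and hence $\widehat{1_H}(\rho)=0$. Your Frobenius-reciprocity phrasing computes the same multiplicities (your $U$ is exactly the permutation module $M^{(n-k,1^k)}$).

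Where you genuinely diverge is the reverse inclusion. The paper argues constructively: it writes an arbitrary $f\in V_k$ via Fourier inversion, observes that all the needed irreducibles already sit inside $M^{(n-k,1^k)}$, and then changes basis to the permutation basis on ordered $k$-tuples to read off explicit coefficients $B_{\alpha,\beta}$ with $f=\sum B_{\alpha,\beta}1_{T_{\alpha\mapsto\beta}}$. Your argument is non-constructive but shorter: you only need one non-zero vector of type $\mu$ inside $W$ (supplied by $U$), and bimodule irreducibility does the rest. The trade-off is that the paper's proof actually produces the linear combination, which is convenient downstream; yours establishes the span more cleanly but gives no formula.
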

Finally, we complete the proof of our main theorem using the following
combinatorial result:
\begin{theorem} \label{thm:span-is-disjoint-union}
  For any $k \in \mathbb{N}$, if \(f\) is a Boolean function on \(S_n\) which is spanned by the $k$-cosets, then $f$ is the characteristic function of a disjoint
  union of $k$-cosets.
\end{theorem}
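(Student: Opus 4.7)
The plan is to prove the theorem by induction on $|A| := |f^{-1}(1)|$. The base case $|A| = 0$ is immediate: $f \equiv 0$ is the indicator of the empty disjoint union of $k$-cosets. The inductive step rests on the following:

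\textbf{Key Lemma.} If $f \in V_k$ is Boolean and $A := f^{-1}(1) \neq \emptyset$, then some $k$-coset $C_0$ of $S_n$ satisfies $C_0 \subseteq A$.

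Granted the Key Lemma, the inductive step is short. Let $C_0 \subseteq A$ be provided by the lemma, and set $g := f - \mathbf{1}_{C_0}$. Since $\mathbf{1}_{C_0}$ is a $k$-coset indicator, $g \in V_k$; and since $C_0 \subseteq A$, the function $g = \mathbf{1}_{A \setminus C_0}$ is Boolean with support of strictly smaller cardinality than $A$. By the inductive hypothesis, $A \setminus C_0 = C_1 \sqcup \cdots \sqcup C_m$ is a disjoint union of $k$-cosets. Since $C_1, \ldots, C_m \subseteq A \setminus C_0$, they are each disjoint from $C_0$, so $A = C_0 \sqcup C_1 \sqcup \cdots \sqcup C_m$ is a disjoint union of $k$-cosets, as desired.

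The main obstacle is the Key Lemma. My plan is to use Theorem \ref{thm:cosets-span} to write $f = \sum_C c_C \mathbf{1}_C$ as a real linear combination of $k$-coset indicators, and then exploit the Boolean constraint $\sum_{S \subseteq [n],\,|S|=k} c_{T_{S, \sigma|_S}} \in \{0,1\}$ valid for every $\sigma \in S_n$. Evaluating at any $\sigma_0 \in A$ gives that the sum equals $1$, so at least one $k$-coset through $\sigma_0$ has nonzero coefficient; the delicate step is upgrading ``meets $A$'' to ``contained in $A$''. To this end I would run a secondary induction on $k$---the case $k = 0$ is trivial because $V_0$ is one-dimensional with only Boolean elements $0$ and the indicator of the single $0$-coset $S_n$. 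For the inductive step, fix $\sigma_0 \in A$ and pass to an auxiliary function obtained by restricting $f$ to the $1$-coset $T_{1 \mapsto \sigma_0(1)} \cong S_{n-1}$ and isolating its degree $\leq k-1$ component, so as to produce a Boolean element of $V_{k-1}(S_{n-1})$; the inductive hypothesis then yields a $(k-1)$-coset inside the restricted support, which lifts to a $k$-coset of $S_n$ contained in $A$. The most delicate point is the ``degree reduction'' step, since the branching rule a priori only guarantees that the restriction lies in $V_k(S_{n-1})$ rather than $V_{k-1}(S_{n-1})$, so the Boolean structure must be leveraged to peel off the extra degree.
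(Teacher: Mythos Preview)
Your overall inductive framework---reduce to the Key Lemma and then peel off one $k$-coset at a time---is exactly how the paper proceeds. The entire content of the theorem is the Key Lemma, and this is where your proposal has a genuine gap.

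You correctly identify that the restriction of $f$ to a $1$-coset $T_{1\mapsto \sigma_0(1)}$ is a Boolean function lying only in $V_k(S_{n-1})$, not in $V_{k-1}(S_{n-1})$. Your proposed fix, to ``isolate its degree $\leq k-1$ component'', does not work: the orthogonal projection of a Boolean function onto $V_{k-1}$ is almost never Boolean. For a concrete obstruction already at $k=1$, take $f=\mathbf{1}_{T_{2\mapsto 3}}$ and restrict to $T_{1\mapsto 1}\cong S_{n-1}$; the restriction is the indicator of a $1$-coset of $S_{n-1}$, whose projection onto $V_0(S_{n-1})$ is the constant $1/(n-1)$, which is neither Boolean nor supported inside $A$. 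So the ``Boolean structure'' cannot be leveraged in the way you sketch, and the induction on $k$ does not close.

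The paper's route to the Key Lemma is entirely different and does not induct on $k$. It first proves the stronger statement that every \emph{non-negative} $f\in V_k$ admits a representation $f=\sum_{\alpha,\beta}b_{\alpha,\beta}\mathbf{1}_{T_{\alpha\mapsto\beta}}$ with all $b_{\alpha,\beta}\geq 0$ (Theorem~\ref{thm:linear-combination-is-union}). From this the Key Lemma is immediate: pick any $(\alpha,\beta)$ with $b_{\alpha,\beta}>0$; then $f>0$ on $T_{\alpha\mapsto\beta}$, hence $f\equiv 1$ there. The non-negative representation is obtained via linear-programming duality: starting from an arbitrary real representation, one tries to add constants along ``$k$-lines'' (families of $k$-cosets partitioning $S_n$) to make all coefficients non-negative; LP duality says this fails only if there is a $k$-bistochastic matrix $C$ with $\sum c_{\alpha,\beta}M_{\alpha,\beta}<0$. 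A Generalized Birkhoff Theorem (Theorem~\ref{thm:genbirkhoff}) then shows any such $C$ is a convex combination of permutation matrices induced by elements of $S_n$, forcing $\sum c_{\alpha,\beta}M_{\alpha,\beta}$ to be a convex combination of values of $f\geq 0$, a contradiction. This machinery is what replaces your missing degree-reduction step.
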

Clearly, the last four theorems immediately imply our main result, Theorem \ref{thm:final-result},
and its cross-intersecting version, Theorem \ref{thm:X-main}.

\subsection{Structure of the paper}
In section \ref{sec:background} we provide the background that we will
use from general representation theory and graph theory. In section
\ref{sec:rep-sn} we prove all necessary results and lemmas that pertain to
representation theory of $S_n$. Section \ref{sec:main-proofs} ties together the
results of the previous two sections in order to bound the size and
provide a Fourier characterization of the largest $k$-intersecting families.
Finally, in section \ref{sec:boolean-functions} we show that this
characterization holds only for $k$-cosets.

\section{Background} \label{sec:background}
\subsection{General representation theory}
In this section, we recall the basic notions and results we need from general representation theory. For more background, the reader may consult \cite{isaacs}.

Let \(G\) be a finite group, and let \(F\) be a field. A {\em representation of \(G\) over \(F\)} is a pair \((\rho,V)\), where \(V\) is a finite-dimensional vector space over \(F\), and \(\rho:\ G \to GL(V)\) is a group homomorphism from \(G\) to the group of all invertible linear endomorphisms of \(V\). The vector space \(V\), together with the linear action of \(G\) defined by \(gv = \rho(g)(v)\), is sometimes called an \(FG\)-{\em module}.  A {\em homomorphism} between two representations \((\rho,V)\) and \((\rho',V')\) is a linear map \(\phi:V \to V'\) such that such that \(\phi(\rho(g)(v)) = \rho'(g)(\phi(v))\) for all \(g \in G\) and \(v \in V\). If \(\phi\) is a linear isomorphism, the two representations are said to be {\em equivalent}, and we write \((\rho,V) \cong (\rho',V')\). If \(\dim(V)=n\), we say that \(\rho\) {\em has dimension} \(n\). If \(V = F^{n}\), then we call \(\rho\) a {\em matrix representation}; choosing an \(F\)-basis for a general \(V\), one sees that any representation is equivalent to some matrix representation.

The representation \((\rho,V)\) is said to be {\em irreducible} if it has no proper subrepresentation, i.e. there is no proper subspace of \(V\) which is \(\rho(g)\)-invariant for all \(g \in G\).

The {\em group algebra} \(F[G]\) denotes the \(F\)-vector space with basis \(G\) and multiplication defined by extending the group multiplication linearly. In other words,
\[F[G] = \left\{\sum_{g \in G}x_{g}g:\ x_{g} \in F\ \forall g \in G\right\},\]
and
\[\left(\sum_{g \in G} x_{g}g\right)\left(\sum_{h\in G}y_{h}h\right) = \sum_{g,h \in G} x_{g}y_{h} (g h).\]
Idenfifying \(\sum_{g \in G} x_g g\) with the function \(g \mapsto x_g\), we can view the vector space \(F[G]\) as the space of all \(F\)-valued functions on \(G\). The representation defined by
\[\rho(g)(x) = gx\quad (g \in G,\ x \in F[G])\]
is called the {\em left regular representation} of \(G\); the corresponding \(FG\)-module is called the {\em group module}. This will be useful when we describe irreducible representations of \(S_n\).

When \(F = \mathbb{R}\) or \(\mathbb{C}\), it turns out that there are only finitely many equivalence classes of irreducible representations of \(G\), and {\em any} representation of \(G\) is isomorphic to a direct sum of irreducible representations of \(G\). Hence, we may choose a set of representatives \(R\) for the equivalence classes of complex irreducible representations of \(G\). For the rest of section \ref{sec:background}, \(R\) will be fixed, and will consist of matrix representations.

If \((\rho,V)\) is a representation of \(G\), and \(\alpha\) is a linear endomorphism of \(V\), we say that \(\alpha\) {\em commutes} with \(\rho\) if \(\alpha \circ (\rho(g)) = \rho(g) \circ \alpha\) for every \(g \in G\). (So an isomorphism of \((\rho,V)\) is simply an invertible linear endomorphism which commutes with \(\rho\).) We will make use of the following:

\begin{lemma}[Schur's Lemma]
If \(G\) is a finite group, and \((\rho,V)\) is a complex irreducible representation of \(G\), then the only linear endomorphisms of \(V\) which commute with \(\rho\) are scalar multiples of the identity.
\end{lemma}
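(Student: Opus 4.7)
The plan is to exploit algebraic closure of \(\mathbb{C}\) to extract an eigenvalue of the commuting endomorphism, and then use irreducibility to conclude that the corresponding eigenspace is all of \(V\). Concretely, let \(\alpha:V \to V\) be a linear endomorphism which commutes with \(\rho\), i.e. \(\alpha \rho(g) = \rho(g) \alpha\) for every \(g \in G\). Since \(V\) is a finite-dimensional vector space over \(\mathbb{C}\), the characteristic polynomial of \(\alpha\) has a root \(\lambda \in \mathbb{C}\), so the eigenspace
\[V_{\lambda} := \ker(\alpha - \lambda \,\mathrm{id}_V) \subset V\]
is a \emph{nonzero} subspace of \(V\).

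The key observation is that \(V_{\lambda}\) is \(\rho\)-invariant. Indeed, if \(v \in V_{\lambda}\), then for every \(g \in G\),
\[\alpha(\rho(g) v) = \rho(g)(\alpha v) = \rho(g)(\lambda v) = \lambda\,\rho(g) v,\]
so \(\rho(g) v \in V_{\lambda}\). Note that \(\alpha - \lambda\,\mathrm{id}_V\) also commutes with \(\rho\), and this is exactly the verification needed to see that its kernel is a subrepresentation.

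Since \((\rho,V)\) is irreducible and \(V_{\lambda}\) is a nonzero \(\rho\)-invariant subspace, we must have \(V_{\lambda} = V\). This says precisely that \(\alpha v = \lambda v\) for every \(v \in V\), i.e. \(\alpha = \lambda\,\mathrm{id}_V\), as required.

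There is essentially no obstacle here: the argument is a two-line application of irreducibility once one has an eigenvalue. The only substantive ingredient is the algebraic closure of \(\mathbb{C}\), which guarantees the existence of \(\lambda\); the hypothesis that \(G\) be finite is not actually used (what matters is \(\dim V < \infty\) and the field being algebraically closed). Finiteness of \(G\) is, however, natural in the surrounding setup, since it is what ensures the decomposability of representations into irreducibles that makes Schur's Lemma genuinely useful elsewhere in the paper.
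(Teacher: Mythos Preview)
Your proof is correct and is the standard eigenvalue argument for Schur's Lemma. The paper itself does not prove this lemma at all --- it is stated without proof as background from general representation theory --- so there is nothing to compare against; your write-up would serve perfectly well as the omitted proof.
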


If \(F=\mathbb{R}\) or \(\mathbb{C}\), we may define an inner product on \(F[G]\) as follows:
\[\langle \phi,\psi \rangle = \frac{1}{|G|} \sum_{g \in G} \phi(g) \overline{\psi(g)}.\]
If \((\rho,V)\) is a complex representation of \(V\), the {\em character} \(\chi_{\rho}\) of \(\rho\) is the map defined by
\begin{eqnarray*}
\chi_{\rho}:  G & \to & \mathbb{C};\\
 \chi_{\rho}(g) &=& \textrm{Tr} (\rho(g)),
\end{eqnarray*}
where \(\textrm{Tr}(\alpha)\) denotes the trace of the linear map \(\alpha\) (i.e. the trace of any matrix of \(\alpha\)). Note that \(\chi_{\rho}(\textrm{Id}) = \dim(\rho)\), and that \(\chi_{\rho}\) is a {\em class function} on \(G\) (meaning that it is constant on each conjugacy-class of \(G\).)

The usefulness of characters lies in the following
\begin{fact}
Two complex representations are isomorphic if and only if they have the same character; the set of complex irreducible characters is an orthonormal basis for the space of class functions in \(\mathbb{C}[G]\).
\end{fact}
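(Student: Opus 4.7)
The plan is to prove the Fact in three stages, beginning with the easy direction and building up to the deepest claim. For the \emph{only if} direction, suppose $(\rho,V) \cong (\rho',V')$ via a linear isomorphism $\phi$; then $\rho'(g) = \phi \circ \rho(g) \circ \phi^{-1}$ for every $g \in G$, so cyclicity of the trace yields $\chi_{\rho'}(g) = \textrm{Tr}(\phi \circ \rho(g) \circ \phi^{-1}) = \textrm{Tr}(\rho(g)) = \chi_\rho(g)$.

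The technical core of the argument is orthonormality of the irreducible characters, to which everything else will reduce. I would first average an arbitrary positive-definite Hermitian form on each $V$ over $G$, so that each $\rho \in R$ may be assumed unitary and hence $\chi_\rho(g^{-1}) = \overline{\chi_\rho(g)}$. Given two irreducibles $(\rho,V), (\rho',V') \in R$ and any linear map $A \colon V \to V'$, form the averaged map
\[
A^{\#} = \frac{1}{|G|} \sum_{g \in G} \rho'(g^{-1}) \circ A \circ \rho(g),
\]
which a direct computation shows satisfies $\rho'(h) \circ A^{\#} = A^{\#} \circ \rho(h)$ for every $h \in G$. Since $\ker A^{\#}$ and $\textrm{im}\, A^{\#}$ are subrepresentations, the irreducibility of $\rho, \rho'$ forces $A^{\#} = 0$ when $\rho \not\cong \rho'$; when $\rho = \rho'$, Schur's Lemma instead gives $A^{\#} = \frac{\textrm{Tr}(A)}{\dim \rho} \textrm{Id}_V$ (the scalar being determined by comparing traces). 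Running $A$ through a basis of matrix units and reading off entries produces Schur's orthogonality relations for matrix coefficients, and summing diagonal entries collapses these to $\langle \chi_\rho, \chi_{\rho'} \rangle = \delta_{\rho,\rho'}$.

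The \emph{if} direction now follows painlessly. Any complex representation $(\pi,W)$ decomposes as $W \cong \bigoplus_{\rho \in R} V_\rho^{\oplus m_\rho(\pi)}$, so $\chi_\pi = \sum_\rho m_\rho(\pi) \chi_\rho$ and orthonormality recovers $m_\rho(\pi) = \langle \chi_\pi, \chi_\rho \rangle$ from $\chi_\pi$ alone; two representations sharing a character therefore share all irreducible multiplicities and are isomorphic.

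It remains to show that the irreducible characters \emph{span} the space of class functions, and I expect this to be the main obstacle, since it requires turning an analytic orthogonality condition into an algebraic vanishing statement. Let $f$ be a class function orthogonal to every $\chi_\rho$ with $\rho \in R$, and for any representation $(\pi,W)$ put
\[
T_\pi = \sum_{g \in G} \overline{f(g)}\, \pi(g) \in \textrm{End}(W).
\]
Because $f$ is a class function, the substitution $g \mapsto hgh^{-1}$ shows $\pi(h) \circ T_\pi \circ \pi(h)^{-1} = T_\pi$, so $T_\pi$ commutes with $\pi$. When $\pi \in R$, Schur's Lemma gives $T_\pi = \lambda_\pi \textrm{Id}$, and taking traces yields $\lambda_\pi \dim\pi = \sum_g \overline{f(g)} \chi_\pi(g) = |G|\langle \chi_\pi, f \rangle = 0$, so $T_\pi = 0$ on every irreducible and hence, by complete reducibility, on every complex representation. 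Taking $\pi$ to be the left regular representation and applying $T_\pi$ to the basis vector corresponding to the identity of $G$ produces $\sum_g \overline{f(g)}\, g = 0$ in $\mathbb{C}[G]$; linear independence of the group elements then forces $f \equiv 0$, which completes the proof.
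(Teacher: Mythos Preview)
Your proof is correct and follows the standard textbook route (Schur orthogonality via averaging, then the regular-representation trick to show the irreducible characters span). However, the paper does not actually prove this statement: it is stated as a \texttt{Fact} in the background section with no proof, the reader being referred to a standard reference. So there is no ``paper's own proof'' to compare against; you have supplied a complete argument where the paper simply quotes the result.
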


If \(\rho\) is any complex representation of \(G\), its character satisfies \(\chi_{\rho}(g^{-1}) = \overline{\chi_{\rho}(g)}\) for every \(g \in G\). In our case, \(G = S_n\), so \(g^{-1}\) is conjugate to \(g\) for every \(g\), and therefore \(\overline{\chi_{\rho}(g)} = \chi_{\rho}(g)\), i.e. all the characters are real-valued. The irreducible characters of \(S_n\) are therefore an orthonormal basis for the space of class functions on \(\mathbb{R}[S_n]\).

Given two representations \((\rho,V)\) and \((\rho',V')\) of \(G\), we can form their direct sum, the representation \((\rho \oplus \rho',V \oplus V')\), and their tensor product, the representation \((\rho \otimes \rho',V \otimes V')\). We have \(\chi_{\rho \oplus \rho'} = \chi_{\rho}+ \chi_{\rho'}\), and \(\chi_{\rho \otimes \rho'} = \chi_{\rho} \cdot \chi_{\rho'}\).

\subsection{Fourier transforms and convolutions}
In this section, we recall the basic notions of Fourier analysis on finite non-Abelian groups. For more background, see for example \cite{terras}. Noting that the normalization chosen differs
in various texts, we set out our convention below.
\begin{definition}
Let \(G\) be a finite group, and let $f,g : G \ra \mathbb{R}$ be two real-valued functions on $G$. Their {\em
convolution} $f*g$ is the real-valued function defined by
\begin{equation}
f*g (t) = \frac {1}{|G|} \sum_{s \in G} f(ts^{-1})g(s)\quad (t \in G).
\end{equation}
\end{definition}

\begin{definition}
The {\em Fourier transform} of a real-valued function $f : G \to \mathbb{R}$ is a matrix-valued function on irreducible representations; its value at the irreducible representation $\rho$ is the matrix
\begin{equation}
 \widehat{f}(\rho) = \frac {1}{|G|} \sum_{s \in G} f(s)\rho(s).
\end{equation}
\end{definition}
We now recall two related formulas we will need: the
Fourier transform of a convolution, and the Fourier inversion formula.
If $f,g : G \ra \mathbb{R}$, and \(\rho\) is an irreducible representation of \(G\), then
\begin{equation}
  \widehat{f*g}(\rho)= \widehat{f}(\rho)\widehat{g}(\rho).
\end{equation}
The Fourier transform is invertible; we have:
\begin{equation}
    f(s) = \sum_{\rho \in R} \dim(\rho) \tr
    \left[ \widehat{f}(\rho) \rho(s^{-1})\right].
\end{equation}
In other words, the Fourier transform contains all the information about a function \(f\).

\subsection{Cayley Graphs}
\label{subsec:cayleygraphs}
Recall that if \(G\) is a group, and \(X \subset G\) is inverse-closed, the {\em Cayley graph on \(G\) generated by \(X\)} is the graph with vertex-set \(G\) and edge-set $\{ \{u,v\} \in G^{(2)} :\ uv^{-1} \in
X\}$; it is sometimes denoted by \(\Cay(G,X)\). In fact, we will only be considering
cases where \(G=S_n\) and $X$ is a union of conjugacy classes (i.e., $X$ is
conjugation-invariant).

The relevance of this notion to our problem stems from the following
observation. Consider the Cayley graph \(\Gamma_1\) on $S_n$ generated by
the set of {\em fixed-point free} permutations,
$$\fpf = \{ \sigma \in S_{n}:\ \sigma(i) \neq i\ \forall i \in [n]\}.$$
As observed in section \ref{sec:history}, a 1-intersecting family of permutations
is precisely an independent set in this graph. More generally, a
$k$-intersecting family of permutations is precisely an independent
set in the Cayley graph \(\Gamma_{k}\) on $S_n$ with generating set
$$\fpf_k = \{\sigma \in S_n:\ \sigma \textrm{ has at most }k\textrm{ fixed points}\}.$$

For any real matrix \(A \in \mathbb{R}[G \times G]\), the {\em left action} of \(A\) on \(\mathbb{R}[G]\) is defined as follows:
\[(A f)(\sigma) = \sum_{\tau \in G} A_{\sigma,\tau} f(\tau).\]

The main observation of this subsection is that the adjacency matrix of a
Cayley graph operates on functions in \(\mathbb{R}[G]\) by convolution with the characteristic
function of the generating set.

\begin{theorem} \label{product-is-convolution}
  Let \(G\) be a finite group, let \(X \subset G\) be inverse-closed, let $\Cay(G,X)$ be the Cayley graph on \(G\) generated by \(X\), and let $A$ be the adjacency matrix of \(\Cay(G,X)\). Then for any function $f : G \rightarrow \mathbb{R}$,
  \begin{equation}
\label{eq:conv}
    Af = |G| (1_X * f).
  \end{equation}
\end{theorem}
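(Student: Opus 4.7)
The statement is essentially an unpacking of definitions, so my plan is to verify it by a direct pointwise calculation. The only subtlety is being careful with the normalization $1/|G|$ in the convention for convolution, which is why the factor of $|G|$ appears on the right-hand side.

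First I would recall that, since $\Cay(G,X)$ has vertex set $G$ and edges $\{u,v\}$ with $uv^{-1} \in X$, the adjacency matrix is given by
\[
A_{\sigma,\tau} = 1_X(\sigma \tau^{-1}) \qquad (\sigma,\tau \in G),
\]
using that $X$ is inverse-closed so $1_X(\sigma\tau^{-1}) = 1_X(\tau\sigma^{-1})$ and the matrix is symmetric. Then I would simply compute, for an arbitrary $\sigma \in G$,
\[
(Af)(\sigma) = \sum_{\tau \in G} A_{\sigma,\tau}\, f(\tau) = \sum_{\tau \in G} 1_X(\sigma\tau^{-1})\, f(\tau),
\]
and compare this with the definition of convolution:
\[
(1_X * f)(\sigma) = \frac{1}{|G|} \sum_{\tau \in G} 1_X(\sigma\tau^{-1}) f(\tau).
\]
These two expressions differ exactly by a factor of $|G|$, giving $Af = |G|(1_X * f)$ pointwise.

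There is no real obstacle here; the only thing to be careful about is matching the convention for the convolution (the summation variable $\tau$ here plays the role of $s$ in the definition, and the argument $t$ is $\sigma$). Since this holds for every $\sigma \in G$ and every $f : G \to \mathbb{R}$, the identity \eqref{eq:conv} follows, and no representation theory is needed for this step — it is purely a bookkeeping lemma that justifies translating spectral questions about Cayley graphs into Fourier-analytic ones via the convolution theorem stated in the previous subsection.
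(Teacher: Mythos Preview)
Your proof is correct and essentially identical to the paper's own argument: both unwind the definitions of the adjacency matrix and the convolution to obtain $(Af)(\sigma) = \sum_{\tau} 1_X(\sigma\tau^{-1}) f(\tau) = |G|(1_X * f)(\sigma)$. There is nothing to add.
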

\begin{proof}
  For any \(f \in \mathbb{R}[G]\), and any \(\sigma \in G\), we have
  \[
  (Af)(\sigma) = \sum_{\tau \in G} A_{\sigma, \tau} f(\tau) = \sum_{\tau \in G}
  1_X(\sigma \tau^{-1}) f(\tau) = |G|(1_X * f)(\sigma),
  \]
  as required.
\end{proof}

Taking the Fourier transform of both sides of (\ref{eq:conv}), we obtain:
\begin{equation} \label{fourier-of-adjacency}
  \widehat{Af} = |G| \cdot \widehat{1_X} \widehat{f}.
\end{equation}

We will see shortly that if \(X\) is conjugation-invariant, as in our case, then $\widehat{1_X}$ is a scalar
matrix, so (\ref{fourier-of-adjacency}) essentially reveals
all eigenfunctions of the operator $A$: it is well
known that for any finite group \(G\), the entries of the matrices of a complete set of complex irreducible
representations of \(G\) form an orthogonal basis for the space of all complex-valued functions on \(G\). So in our case, these are also a
complete set of eigenfunctions of $A$.
\begin{theorem}[Schur; Babai; Diaconis, Shahshahani; Roichman; others \cite{babai, diaconis-1980, roichman-1994}] \label{thm:cayley-spectrum}
  Let \(G\) be a finite group, and let \(R\) be a complete set of complex irreducible matrix representations of \(G\), as above. Let $X \subset G$ be inverse-closed and conjugation-invariant, and let $\Cay(G,X)$ be the Cayley graph on $G$ with generating set $X$. Let $A$ be the adjacency matrix of \(\Cay(G,X)\). For any $\rho \in R$, and any \(i,j \leq \dim(\rho)\),
  consider the function $\rho_{ij}(\sigma) = \rho(\sigma)_{ij}$. Then $\{
  \rho_{ij} \}_{\rho,i,j}$ is a complete set of eigenvectors of $A$.  Furthermore, the eigenvalue of $\rho_{ij}$
  is
  \begin{equation} \label{cayley-eigenvalue}
    \lambda_{\rho} = \frac{1}{\dim(\rho)} \sum_{\tau \in X} \chi_{\rho}(\tau) =  \frac{|G|\langle \chi_{\rho},1_X \rangle}{\dim(\rho)},
  \end{equation}
  which depends only on $\rho$.
\end{theorem}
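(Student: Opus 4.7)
The plan is to leverage equation (\ref{fourier-of-adjacency}), which reduces the spectral analysis of $A$ to understanding $\widehat{1_X}(\rho)$ for each irreducible representation $\rho$. The first key step is to prove that $\widehat{1_X}(\rho)$ is a scalar matrix. Since $X$ is conjugation-invariant, $gXg^{-1} = X$ for every $g \in G$, and so
\[\rho(g) \widehat{1_X}(\rho) \rho(g)^{-1} = \frac{1}{|G|} \sum_{\tau \in X} \rho(g \tau g^{-1}) = \widehat{1_X}(\rho).\]
Thus $\widehat{1_X}(\rho)$ commutes with every $\rho(g)$, and Schur's Lemma, applied to the irreducible $\rho$, forces $\widehat{1_X}(\rho) = c_\rho I$ for some scalar $c_\rho$.

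The second step pins down $c_\rho$ by taking the trace:
\[c_\rho \dim(\rho) = \tr(\widehat{1_X}(\rho)) = \frac{1}{|G|} \sum_{\tau \in X} \chi_\rho(\tau),\]
which gives $|G| c_\rho = \lambda_\rho$, with $\lambda_\rho$ as defined in (\ref{cayley-eigenvalue}). To verify that each matrix entry $\rho_{ij}$ is itself an eigenfunction of $A$ with eigenvalue $\lambda_\rho$, I would unpack the action of $A$ directly: writing $(Af)(\sigma) = \sum_{\mu \in X} f(\mu^{-1} \sigma)$ via the substitution $\mu = \sigma \tau^{-1}$,
\[(A \rho_{ij})(\sigma) = \sum_{\mu \in X} \rho(\mu^{-1} \sigma)_{ij} = \sum_k \biggl(\sum_{\mu \in X} \rho(\mu^{-1})_{ik}\biggr) \rho_{kj}(\sigma).\]
Because $X$ is inverse-closed, $\sum_{\mu \in X} \rho(\mu^{-1}) = \sum_{\mu \in X} \rho(\mu) = |G| \widehat{1_X}(\rho) = \lambda_\rho I$, so the inner sum collapses to $\lambda_\rho \delta_{ik}$ and the whole expression is $\lambda_\rho \rho_{ij}(\sigma)$.

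Finally, the completeness of the family $\{\rho_{ij}\}_{\rho \in R,\, 1 \leq i, j \leq \dim(\rho)}$ is the Peter--Weyl theorem (equivalently, the Schur orthogonality relations): these functions are orthogonal in $\mathbb{C}[G]$, and the Burnside identity $\sum_{\rho \in R} \dim(\rho)^2 = |G|$ confirms that they give a basis; hence we have a full set of eigenvectors of $A$, and $\lambda_\rho$ depends only on $\rho$ as asserted.

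The main obstacle in carrying out this plan is bookkeeping rather than substance: one must be attentive to whether $\rho$ or $\rho^{-1}$, and left- or right-translation, appears in each step (this is exactly where inverse-closedness of $X$ is used), and one should note that replacing $\rho$ by an equivalent representation only conjugates $\widehat{1_X}(\rho)$, so the scalar $c_\rho$ and hence $\lambda_\rho$ is a genuine invariant of the equivalence class of $\rho$. Beyond these checks, the argument requires nothing more than Schur's Lemma and the standard decomposition of the group algebra.
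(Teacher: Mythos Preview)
Your proof is correct and follows essentially the same approach as the paper: both arguments show that $\widehat{1_X}(\rho)$ commutes with every $\rho(g)$ by conjugation-invariance of $X$, invoke Schur's Lemma to conclude it is a scalar $c_\rho I$, and compute $c_\rho$ by taking the trace. The only cosmetic difference is that the paper deduces the eigenvector claim directly from equation (\ref{fourier-of-adjacency}) together with the well-known fact that the matrix entries $\rho_{ij}$ form a basis for $\mathbb{C}[G]$, whereas you unpack this by computing $A\rho_{ij}$ explicitly; your computation is exactly what (\ref{fourier-of-adjacency}) encodes.
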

\begin{proof}
   Note that, due to (\ref{fourier-of-adjacency}),
  the claim regarding eigenvectors will follow immediately once we have shown
  that $\widehat{1_X}(\rho)$ is a scalar matrix for every irreducible representation $\rho$.
  To do this, we will show that $ \widehat{1_X}(\rho)$ commutes with every $\rho(\sigma)$, which will imply
  the result (by Schur's Lemma.) Indeed, for every \(\sigma \in G\),
  \begin{eqnarray*}
    \rho(\sigma)^{-1} \widehat{1_X}(\rho) \rho(\sigma)
    & = & \rho(\sigma^{-1}) \widehat{1_X}(\rho)\rho(\sigma) \\
    & = & \frac{1}{|G|}\sum_{\tau \in G} \rho(\sigma^{-1}) \rho(\tau) 1_X(\tau) \rho(\sigma) \\
    & = & \frac{1}{|G|}\sum_{\tau \in X}  \rho(\sigma^{-1} \tau \sigma) \\
    &=& \frac{1}{|G|}\sum_{\tau \in X}  \rho( \tau )  \\
    &=& \widehat{1_X}(\rho),
  \end{eqnarray*}
where we have used the fact that \(X\) is conjugation-invariant for the fourth equality. Hence, by Schur's Lemma, $\widehat{1_X}(\rho)$ is indeed a scalar matrix; write \(\widehat{1_X}(\rho) = c_{\rho}I\). To calculate $c_{\rho}$, note that for any $i \le \dim(\rho)$,
  $$
    c_{\rho} = \frac{1}{|G|}\sum_{\tau \in X} 1_X(\tau) \rho_{i,i}(\tau).
  $$
  Summing over $i$, we obtain:
  $$
    c_{\rho}\dim(\rho)
    = \frac{1}{|G|}\sum_{\tau \in X}  \tr[\rho(\tau)]
    = \frac{1}{|G|} \sum_{\tau \in X} \chi_{\rho}(\tau).
  $$
  From (\ref{fourier-of-adjacency}), $\lambda_\rho= |G|c_\rho $, completing the proof.
\end{proof}

\subsection{Hoffman's bound}
First, a word regarding normalization, as this is always a potential source of confusion
when doing Fourier analysis. Given a graph $G$, we use the uniform probability measure on the vertex-set \(V\) of \(G\), {\em not} the counting measure. The uniform measure induces the following inner product on \(\mathbb{R}[V]\):
\[\langle f,g \rangle = \frac{1}{|V|} \sum_{v \in V}f(v)g(v);\]
this induces the Euclean norm
\[||f||_2 = \sqrt{\langle f,f \rangle}.\]
If \(G = (V,E)\) is a graph, the {\em adjacency matrix} \(A\) of \(G\) is defined by
\[A_{v,w} = 1_{\{vw \in E(G)\}}\quad (v,w \in V(G)).\]
This is a real, symmetric, \(n \times n\) matrix, so there exists an orthonormal system of \(n\) eigenvectors of \(A\), which forms a basis for \(\mathbb{R}[V]\). (Note that the eigenvalues of \(A\) are often referred to as the eigenvalues of \(G\).)

Hoffman \cite{hoffman} observed the following useful bound on the measure of an independent set in a
regular graph, in terms of the eigenvalues of the graph:

\begin{theorem} \label{thm:hoffman} \cite{hoffman}
  Let $G=(V,E)$ be a $d$-regular, \(n\)-vertex graph. Let \(A\) be the adjacency matrix of \(G\). Let \(\{v_1,v_2,\ldots,v_n\}\) be an orthonormal system of eigenvectors of \(A\), with corresponding eigenvalues \(d=\lambda_1 \geq \lambda_2 \geq \ldots \geq \lambda_n = \lambda_{\min}\) (so that \(v_1\) is the all-1's vector). If $I \subset V$ is an independent set in $G$, then
  \begin{equation} \label{hoffman-formula}
    \frac{|I|}{|V|} \leq \frac{-\lambda_{\min}}{\lambda_1 - \lambda_{\min}}.
  \end{equation}
  If equality holds, then
  $$
    1_I \in
    \Span \left( \{ v_1 \} \cup \{ v_i : \lambda_i = \lambda_{\min} \} \right).
  $$
\end{theorem}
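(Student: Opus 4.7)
The plan is to use the standard eigenvalue-bound argument: expand $1_I$ in the orthonormal eigenbasis $\{v_1,\ldots,v_n\}$ of $A$, and exploit the fact that $I$ being an independent set forces $\langle 1_I, A 1_I \rangle = 0$.

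Write $1_I = \sum_{i=1}^n c_i v_i$ and set $\alpha = |I|/|V|$. Under the normalized inner product $\langle f,g\rangle = \frac{1}{|V|}\sum_v f(v) g(v)$ the all-ones vector $v_1$ has norm $1$, so two elementary computations fix the relevant coefficients: $\sum_i c_i^2 = \|1_I\|_2^2 = \alpha$ and $c_1 = \langle 1_I, v_1\rangle = \alpha$. Since $I$ is independent, $\langle 1_I, A 1_I\rangle = \frac{1}{|V|}\sum_{u,v\in I}A_{uv} = 0$, which in the eigenbasis reads $\sum_i c_i^2 \lambda_i = 0$.

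Plugging in $c_1^2 = \alpha^2$ and the bound $\lambda_i \geq \lambda_{\min}$ for $i\geq 2$, this identity forces
\[
0 = \sum_i c_i^2 \lambda_i \;\geq\; \alpha^2 \lambda_1 + \lambda_{\min}(\alpha - \alpha^2),
\]
which rearranges to $\alpha(\lambda_1 - \lambda_{\min}) \leq -\lambda_{\min}$, i.e.\ the desired inequality (\ref{hoffman-formula}). For the equality statement, observe that the only estimate used was $\sum_{i\geq 2} c_i^2 \lambda_i \geq \lambda_{\min}\sum_{i\geq 2} c_i^2$; equality there forces $c_i = 0$ whenever $i \geq 2$ and $\lambda_i \neq \lambda_{\min}$, so $1_I \in \Span(\{v_1\} \cup \{v_i : \lambda_i = \lambda_{\min}\})$.

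This is a classical one-line eigenvalue argument and presents no real obstacle; the only care needed is to respect the paper's $1/|V|$-normalized inner product throughout, so that $\|v_1\|_2 = 1$ and $\sum_i c_i^2 = c_1 = \alpha$ rather than picking up factors of $|V|$.
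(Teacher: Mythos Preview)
Your proof is correct and follows essentially the same approach as the paper: expand $1_I$ in the eigenbasis, use Parseval to get $\sum_i c_i^2 = c_1 = \alpha$, use independence to get $\sum_i c_i^2\lambda_i = 0$, and bound the tail by $\lambda_{\min}$. The paper's argument is line-for-line the same (with $a_i$ in place of your $c_i$), so there is nothing to add.
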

\begin{proof}
  Let $f = 1_I$, and let $\alpha =
  \frac{|I|}{|V|}$. Observe that
\[f^t A f = \sum_{v,w \in I} A_{v,w} = 2e(G[I]) = 0,\]
since \(I\) is independent. Write $f$ as a linear combination of the eigenvectors:
  $$
    f = \sum_{i=1}^{n} a_i v_i.
  $$
  Then $\alpha = \langle f, v_1 \rangle = a_1$. Moreover, by
  Parseval's identity, we have $\sum_i a_i^2 = ||f||_2^2 =
  \alpha$.  Now,
  $$
    0
    = f^t A f
    = \sum_{i=1}^{n} a_i^2 \lambda_i
    \geq a_1^2 \lambda_1 + \sum_{i=2}^{n} a_i^2 \lambda_{\min}
    = \alpha^2 \lambda_1 + (\alpha - \alpha^2) \lambda_{\min}.
  $$
  Rearranging gives (\ref{hoffman-formula}). If equality holds, then \(a_{i} \neq 0\) implies that \(i = 1\) or \(\lambda_{i} = \lambda_{\min}\), completing the proof.
\end{proof}

A variant of Hoffman's theorem, which will be crucial for us,
comes from weighting the edges of the graph \(G\) with real (possibly negative) weights. 

\begin{theorem} \label{thm:hoffman-linear}
  Let $G=(V,E)$ be an \(n\)-vertex graph. Let $G_1, \ldots, G_t$ be regular, spanning subgraphs of \(G\), all having \(\{v_1,v_2\ldots,v_n\}\) as an orthonormal system of eigenvectors (where \(v_1\) is the all-1's vector). Let $\lambda_i^{(j)}$ be the eigenvalue of $v_i$ in
  $G_j$. Let $\beta_1, \ldots, \beta_t \in \mathbb{R}$, and let
  $\lambda_i = \sum_j \beta_j \lambda_i^{(j)}$, and let \(\lambda_{\min} = \min_{i} \lambda_i\). If $I \subset V$ is an independent set in $G$, then
  \begin{equation}
\label{eq:weightedhoffman}
\frac{|I|}{|V|} \leq \frac{-\lambda_{\min}}{\lambda_1 - \lambda_{\min}}.
  \end{equation}
  If equality holds, then
  $$
    1_I \in
    \Span \left( \{ v_1 \} \cup \{ v_i : \lambda_i = \lambda_{\min} \} \right).
  $$
\end{theorem}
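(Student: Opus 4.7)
The plan is to mimic the proof of Theorem \ref{thm:hoffman} with the ordinary adjacency matrix replaced by the weighted linear combination
$$A := \sum_{j=1}^{t} \beta_j A_j,$$
where $A_j$ denotes the adjacency matrix of $G_j$. This matrix is symmetric (since each $A_j$ is), and by hypothesis the orthonormal basis $\{v_1,\ldots,v_n\}$ diagonalizes every $A_j$; hence it diagonalizes $A$ as well, with eigenvalue $\lambda_i = \sum_j \beta_j \lambda_i^{(j)}$ on $v_i$. In particular $\lambda_1 = \sum_j \beta_j \lambda_1^{(j)}$ is the eigenvalue on the all-$1$'s vector.

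Next I would record the crucial combinatorial input coming from the hypothesis that each $G_j$ is a spanning subgraph of $G$: an independent set $I \subset V$ in $G$ is automatically independent in every $G_j$. Consequently, for every pair $u,v \in I$ (including $u=v$, since adjacency matrices of simple graphs vanish on the diagonal) we have $(A_j)_{u,v}=0$ for every $j$, and therefore $A_{u,v}=0$ for all $u,v \in I$. Writing $f = 1_I$, this yields
$$f^{t} A f = \sum_{u,v \in I} A_{u,v} = 0.$$

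The remaining steps follow the Hoffman template. Expand $f = \sum_{i=1}^{n} a_i v_i$ in the common eigenbasis; then $a_1 = \langle f, v_1 \rangle = |I|/|V| =: \alpha$, and by Parseval $\sum_i a_i^2 = \|f\|_2^2 = \alpha$. Using $\lambda_i \geq \lambda_{\min}$ for every $i$ and $\alpha - \alpha^2 \geq 0$, we get
$$0 \;=\; f^{t} A f \;=\; \sum_{i=1}^{n} a_i^2 \lambda_i \;\geq\; \alpha^2 \lambda_1 + (\alpha - \alpha^2)\lambda_{\min}.$$
Rearranging this inequality gives (\ref{eq:weightedhoffman}). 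For the equality clause, note that equality in the inequality above forces $a_i = 0$ for every $i \geq 2$ with $\lambda_i > \lambda_{\min}$, giving the stated $\Span$ characterization.

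I do not expect any serious obstacle, as the argument is a direct adaptation of the standard Hoffman proof. The one subtlety that is worth emphasising is that it is the spanning-subgraph hypothesis, rather than merely having a common eigenbasis, that drives the argument: without it, the off-diagonal entries of $A$ over $I \times I$ need not vanish, and the identity $f^t A f = 0$ would fail. Note also that, unlike in the unweighted case, $\lambda_1$ need not be the top eigenvalue of $A$; the argument only uses that $\lambda_1$ is the eigenvalue on the constant direction and that $\lambda_i \geq \lambda_{\min}$ for the rest.
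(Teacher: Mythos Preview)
Your proof is correct and follows essentially the same approach as the paper: define $A=\sum_j \beta_j A_j$, use independence of $I$ in each $G_j$ to get $f^t A f=0$, expand in the common eigenbasis, and bound $\sum_i a_i^2 \lambda_i$ from below by $\alpha^2\lambda_1+(\alpha-\alpha^2)\lambda_{\min}$. Your explicit justification of $f^t A f=0$ via the spanning-subgraph hypothesis is actually more detailed than the paper's, which simply asserts it.
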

\begin{proof}
  The proof is a simple generalization of that of Theorem
  \ref{thm:hoffman}. For each $j$, let $A_j$ be the adjacency matrix of $G_j$, and
  let $A = \sum_j \beta_j A_j$. We have
 \begin{eqnarray*}
    0
    & = &  f^t A f
    = \sum_{j=1}^{t} \beta_j f^t A_j f
    = \sum_{j=1}^{t} \beta_j \sum_{i=1}^{n} a_i^2 \lambda_i^{(j)}
    = \sum_{i=1}^{n} a_i^2 \lambda_i  \\
    & \geq &   \lambda_1 a_1^2 + \sum_i a_i^2 \lambda_{\min}
    = \alpha^2 \lambda_1 + (\alpha - \alpha^2) \lambda_{\min}.
  \end{eqnarray*}

  Rearranging gives (\ref{eq:weightedhoffman}). If equality holds, then \(a_{i} \neq 0\) implies that \(i = 1\) or \(\lambda_i = \lambda_{\min}\), completing the proof.
\end{proof}

We may think of the \(\lambda_i\)'s above as the eigenvalues of the linear combination of graphs
\[Y = \sum_{j=1}^{t} \beta_j G_j.\]
The corresponding linear combination of adjacency matrices
\[A = \sum_{j=1}^{t} \beta_j A_j\]
is a so-called {\em pseudo-adjacency matrix} for \(G\), meaning a symmetric matrix such that \(A_{v,w}  =0\) whenever \(vw \notin E(G)\); the \(\lambda_i\)'s are the eigenvalues of \(A\).

Finally, we will need the following cross-independent version of
Hoffman's Theorem. Variants of this theorem can be found in several
sources, e.g. \cite{AKKMS}.
\begin{theorem} \label{thm:Xhoffman}
  Let $G=(V,E)$ be a $d$-regular, \(n\)-vertex graph, and let \(\{v_1,v_2,\ldots,v_n\}\) be an orthonormal system of eigenvectors of \(G\), with corresponding eigenvalues \(d=\lambda_1,\lambda_2,\ldots,\lambda_n\) ordered by descending absolute value (so that \(v_1\) is again the all-1's vector). Let $I,J \subset V$ be (not necessarily disjoint) sets of
  vertices in $G$ such that there are no edges of \(G\) between $I$
  and $J$. Then
  \begin{equation} \label{Xhoffman-formula}
   \sqrt{ \frac{|I|}{|V|}\cdot \frac{|J|}{|V|}} \leq
   \frac{|\lambda_2|}{\lambda_1 + |\lambda_2|}.
  \end{equation}
  If equality holds, then
  $$
    1_I, 1_J \in \Span \left( \{ v_1 \} \cup \{ v_i : |\lambda_i| =
    |\lambda_2| \} \right).
  $$
\end{theorem}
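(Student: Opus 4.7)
The plan is to mimic the argument of Theorem \ref{thm:hoffman}, but with the bilinear pairing $f^t A g$ in place of the quadratic form $f^t A f$, and to use Cauchy--Schwarz twice in order to extract the sharp constant. Write $f = 1_I$, $g = 1_J$, $\alpha = |I|/|V|$ and $\beta = |J|/|V|$, and assume $\alpha,\beta > 0$ (otherwise the bound is trivial). Since there are no edges of $G$ between $I$ and $J$,
$$f^t A g \;=\; \sum_{v \in I,\; w \in J} A_{v,w} \;=\; 0.$$

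Next I would expand in the given eigenbasis, $f = \sum_i a_i v_i$ and $g = \sum_i b_i v_i$. Under the paper's normalisation $v_1$ is the all-$1$ vector and already has unit norm, so $a_1 = \langle f,v_1\rangle = \alpha$, $b_1 = \beta$, and Parseval gives $\sum_i a_i^2 = \|f\|_2^2 = \alpha$, $\sum_i b_i^2 = \beta$. Because $v_i^t A v_j = \lambda_j |V|\delta_{ij}$, the vanishing of $f^t A g$ translates to $\sum_i \lambda_i a_i b_i = 0$, and hence
$$\lambda_1 \alpha\beta \;=\; -\sum_{i \geq 2} \lambda_i a_i b_i.$$

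I then apply Cauchy--Schwarz twice. Using $|\lambda_i| \leq |\lambda_2|$ for $i \geq 2$,
$$\Big|\sum_{i \geq 2} \lambda_i a_i b_i\Big| \;\leq\; |\lambda_2|\sqrt{\textstyle\sum_{i \geq 2}a_i^2}\,\sqrt{\textstyle\sum_{i \geq 2}b_i^2} \;=\; |\lambda_2|\sqrt{\alpha(1-\alpha)\beta(1-\beta)},$$
which after dividing through by $\sqrt{\alpha\beta}$ gives $\lambda_1 \sqrt{\alpha\beta} \leq |\lambda_2|\sqrt{(1-\alpha)(1-\beta)}$. A second Cauchy--Schwarz, applied to the unit vectors $(\sqrt{\alpha},\sqrt{1-\alpha})$ and $(\sqrt{\beta},\sqrt{1-\beta})$, yields $\sqrt{\alpha\beta} + \sqrt{(1-\alpha)(1-\beta)} \leq 1$; substituting this into the previous inequality and rearranging produces the desired
$$\sqrt{\alpha\beta} \;\leq\; \frac{|\lambda_2|}{\lambda_1 + |\lambda_2|}.$$

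For the equality statement I would trace back both Cauchy--Schwarz steps. Equality in the first forces $(a_i)_{i\geq 2}$ and $(b_i)_{i\geq 2}$ to be supported on $\{i : |\lambda_i| = |\lambda_2|\}$; equality in the second forces $(\sqrt{\alpha},\sqrt{1-\alpha})$ and $(\sqrt{\beta},\sqrt{1-\beta})$ to be parallel, i.e.\ $\alpha = \beta$. Combining these with $a_1 = \alpha$, $b_1 = \beta$ gives $1_I, 1_J \in \Span(\{v_1\} \cup \{v_i : |\lambda_i| = |\lambda_2|\})$. The main subtlety, rather than obstacle, is coordinating the two Cauchy--Schwarz inequalities when the $\lambda_i$ can be negative; note that by Perron--Frobenius $\lambda_1 = d \geq |\lambda_2|$, so the final rearrangement is valid and the bound is non-trivial.
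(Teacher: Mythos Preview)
Your proof is correct and follows essentially the same route as the paper: expand $f^tAg=0$ in the eigenbasis, isolate the $i=1$ term, bound the tail by $|\lambda_2|\sqrt{(\alpha-\alpha^2)(\beta-\beta^2)}$, and then use the scalar inequality $\sqrt{(1-\alpha)(1-\beta)}\leq 1-\sqrt{\alpha\beta}$ to finish. The only cosmetic difference is that the paper derives this last inequality from AM--GM ($(1-\alpha)(1-\beta)=1-\alpha-\beta+\alpha\beta\leq 1-2\sqrt{\alpha\beta}+\alpha\beta$), whereas you phrase it as Cauchy--Schwarz on the unit vectors $(\sqrt{\alpha},\sqrt{1-\alpha})$ and $(\sqrt{\beta},\sqrt{1-\beta})$; these are the same inequality. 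Your equality analysis is in fact more explicit than the paper's, which simply says the case is handled ``as in the original Hoffman theorem''.
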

\begin{proof}
  Let $f$ and $g$ be the characteristic functions of $I$ and $J$ respectively. As in the proof of Hoffman's Theorem,
  write
  $$
    f = \sum_{i=1}^{n} a_i v_i,\  g = \sum_{i=1}^{n} b_i v_i.
  $$
  Let $\alpha = \frac{|I|}{|V|},\beta = \frac{|J|}{|V|}$. We have
  \begin{equation} \label{Xhoffman-equation}
    0 = 2e(I,J)
    = f^t A g
    = \sum_{i=1}^{n} a_ib_i \lambda_i
    = \alpha \beta \lambda_1 +\sum_{i=2}^{n} a_ib_i \lambda_i.
  \end{equation}
  Hence, by the Cauchy-Schwarz inequality,
  $$
    \alpha \beta \lambda_1 =
    \left|\sum_{i=2}^{n} a_ib_i \lambda_i\right| \leq
    \sum_{i=2}^n | a_ib_i \lambda_i| \leq
    |\lambda_2| \sqrt{\alpha -\alpha^2} \sqrt{\beta - \beta^2}.
  $$
  Rearranging gives
  $$
    \sqrt{\frac{\alpha \beta}{(1-\alpha)(1-\beta)}} \leq \frac{|\lambda_2|}{\lambda_1}.
  $$
 Recall the AM-GM inequality for two real variables (see for example \cite{amgm}):
$$
(\alpha + \beta)/2 \geq \sqrt{\alpha \beta}.
$$
This implies that
$$
(1-\alpha)(1-\beta) = 1-\alpha-\beta+\alpha \beta \leq 1-2\sqrt{\alpha \beta}+\alpha\beta = (1-\sqrt{\alpha \beta})^2,
$$
and therefore
$$
\sqrt{(1-\alpha)(1-\beta)} \leq 1-\sqrt{\alpha\beta}.
$$
Hence,
$$
    \frac{\sqrt{\alpha \beta}}{1-\sqrt{\alpha\beta}} \leq \frac{|\lambda_2|}{\lambda_1}.
$$
Rearranging gives
$$
\sqrt{\alpha \beta} \leq \frac{|\lambda_2|}{\lambda_1 + |\lambda_2|},
$$
as required. The case of equality is dealt with as in the original Hoffman theorem.
\end{proof}

Note that the generalization of Hoffman's theorem that we mention
above holds, with the obvious modifications, in the cross-independent case.

\section{Representation Theory of $S_n$}
\label{sec:rep-sn}
In this section we gather all the necessary background and results regarding the representation theory of $S_n$. Readers
familiar with the basics of this theory are invited to skip the
following subsection.
\subsection{Basics}
Our treatment follows Sagan \cite{sagan}.

A \emph{partition} of \(n\) is a non-increasing sequence of integers summing to \(n\), i.e. a sequence $\lambda = (\lambda_1, \ldots, \lambda_k)$ with \(\lambda_{1} \geq \lambda_{2} \geq \ldots \geq \lambda_{k}\) and \(\sum_{i=1}^{k} \lambda_{i}=n\); we write \(\lambda \vdash n\). For example, \((3,2,2) \vdash 7\); we sometimes use the shorthand \((3,2,2) = (3,2^{2})\). The following two orders on partitions of \(n\) will be useful.

\begin{definition}
  (Dominance order) Let $\lambda = (\lambda_1, \ldots, \lambda_r)$ and $\mu = (\mu_1,
  \ldots, \mu_s)$ be partitions of $n$. We say that $\lambda \domgeq
  \mu$ ($\lambda$ \emph{dominates} $\mu$) if $\sum_{j=1}^{i} \lambda_i \geq \sum_{j=1}^{i} \mu_i$ for all \(i\) (where we define \(\lambda_{i} = 0 \) for all \(i > r\), and \(\mu_{j} = 0\) for all \(j > s\)).
\end{definition}

It is easy to see that this is a partial order.

\begin{definition}
  (Lexicographic order) Let $\lambda = (\lambda_1, \ldots, \lambda_r)$ and $\mu = (\mu_1,
  \ldots, \mu_s)$ be partitions of $n$. We say that $\lambda > \mu$ if \(\lambda_{j} > \mu_{j}\), where \(j = \min\{i \in [n]:\ \lambda_{i} \neq \mu_{i}\}\).
\end{definition}
It is easy to see that this is a total order which extends the dominance order.

The \emph{cycle-type} of a permutation \(\sigma \in S_{n}\) is the partition of \(n\) obtained by expressing \(\sigma\) as a product of disjoint cycles and listing its cycle-lengths in non-increasing order. The conjugacy-classes of \(S_{n}\) are precisely
\[\{\sigma \in S_{n}: \textrm{ cycle-type}(\sigma) = \lambda\}_{\lambda \vdash n}.\]
Moreover, there is an explicit one-to-one correspondence between irreducible representations of \(S_{n}\) (up to isomorphism) and partitions of \(n\), which we now describe.

  Let \(\lambda = (\lambda_{1}, \ldots, \lambda_{k})\) be a partiton of \(n\). The \emph{Young diagram} of $\lambda$ is
  an array of $n$ boxes, or {\em cells}, having $k$ left-justified rows, where row $i$
  contains $\lambda_i$ cells. For example, the Young diagram of the partition \((3,2^{2})\) is:
  \[
\yng(3,2,2)
\]

If the array contains the numbers \(\{1,2,\ldots,n\}\) inside the cells, we call it a \(\lambda\)-\emph{tableau}, or a tableau of shape \(\lambda\); for example:
\[
\young(617,54,32)
\]
is a \((3,2^{2})\)-tableau. Two \(\lambda\)-tableaux are said to be \emph{row-equivalent} if they have the same numbers in each row; if a \(\lambda\)-tableau \(t\) has rows \(R_{1},\ldots,R_{l} \subset [n]\) and columns \(C_{1},\ldots,C_{l} \subset [n]\), then we let \(R_{t} = S_{R_{1}} \times S_{R_{2}} \times \ldots \times S_{R_{l}}\) be the row-stablizer of \(t\) and \(C_t = S_{C_{1}} \times S_{C_{2}} \times \ldots \times S_{C_{k}}\) be the column-stabilizer.

A \(\lambda\)-\emph{tabloid} is a \(\lambda\)-tableau with unordered row entries (or formally, a row-equivalence class of \(\lambda\)-tableaux); given a tableau \(t\), we write \([t]\) for the tabloid it produces. For example, the \((3,2^{2})\)-tableau above produces the following \((3,2^{2})\)-tabloid:\\
\\
\begin{tabular}{ccc} \{1 & 6 & 7\}\\
\{4 &\ 5\} &\\
\{2 &\ 3\} &\\
\end{tabular}\\
\\
\\
Consider the natural permutation action of \(S_{n}\) on the set \(X^{\lambda}\) of all \(\lambda\)-tabloids; let \(M^{\lambda} = \mathbb{R}[X^{\lambda}]\) be the corresponding permutation module, i.e. the real vector space with basis \(X^{\lambda}\) and \(S_{n}\) action given by extending the permutation action linearly. In general, \(M^{\lambda}\) is reducible. However, we can describe a complete set of real irreducible representations, as follows.

If $t$ is a tableau, let $\kappa_t = \sum_{\pi \in C_t}
  \sgn(\pi) \pi$; this is an element of the group module $\mathbb{R}[S_n]$. Let $e_t = \kappa_t \{ t \}$. This is a (\(\pm1\))-linear combination of tabloids, so is an element of \(M^{\lambda}\); we call the \(e_t\)'s {\em polytabloids}.

\begin{definition}
  Let $\mu$ be a partition of $n$. The {\em Specht module}
  $S^{\mu}$ is the submodule of $M^{\mu}$ spanned by the \(\mu\)-polytabloids:
  $$S^{\mu} = \textrm{Span}\{e_t:\ t \textrm{ is a } \mu\textrm{-tabloid}\}.$$
\end{definition}

\begin{theorem}
\label{thm:spechtmodules}
 The Specht modules are a complete set of pairwise inequivalent, irreducible representations of $S_n$.
\end{theorem}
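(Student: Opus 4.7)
The plan is to prove this classical theorem in four steps, following the approach of James (as presented in Sagan).

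First, I would equip $M^\mu$ with the $S_n$-invariant inner product making the $\mu$-tabloids an orthonormal basis. The combinatorial workhorse is a \textbf{sign lemma}: for any $\mu$-tableau $t$ and any $\mu$-tabloid $\{s\}$, the vector $\kappa_t \{s\}$ is zero whenever some two entries in a common row of $\{s\}$ share a column of $t$ (the transposition swapping them then fixes $\{s\}$ but contributes with opposite signs in $\kappa_t$), and otherwise equals $\pm e_{t'}$ for a suitable $\mu$-tableau $t'$. This elementary lemma drives everything below.

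Second, combining the sign lemma with the self-adjointness of $\kappa_t$ under the above inner product (which holds because each $\pi \in C_t$ acts as a permutation matrix, so $\pi^T = \pi^{-1}$, whence $\kappa_t^T = \kappa_t$ by reindexing), I would prove James's \textbf{submodule theorem}: any submodule $U \subseteq M^\mu$ satisfies either $S^\mu \subseteq U$ or $U \subseteq (S^\mu)^\perp$. Indeed, given $u \in U$, if some $\kappa_t u \neq 0$, then expanding $u$ in the tabloid basis and applying the sign lemma shows $\kappa_t u$ is a nonzero scalar multiple of $e_t$, hence $e_t \in U$; since $S_n$ permutes the polytabloids transitively, $S^\mu \subseteq U$. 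Otherwise $\kappa_t u = 0$ for every $t$, and $\langle u, e_t\rangle = \langle u, \kappa_t \{t\}\rangle = \langle \kappa_t u, \{t\}\rangle = 0$ for every $t$, so $u \perp S^\mu$. Irreducibility of $S^\mu$ follows at once: any nonzero submodule of $S^\mu$ must contain $S^\mu$ (and hence equal it) or lie in $S^\mu \cap (S^\mu)^\perp$, which is trivial by positive-definiteness.

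Third, for pairwise inequivalence I would sharpen the sign lemma into a \textbf{dominance lemma}: if $t$ is a $\lambda$-tableau and $\{s\}$ a $\mu$-tabloid with $\kappa_t\{s\} \neq 0$, then $\lambda \domgeq \mu$. The reason is that entries sharing a row of $\{s\}$ must lie in distinct columns of $t$, so for every $i$ the first $i$ rows of $\{s\}$ inject into the first $i$ columns of $t$, giving $\sum_{j \leq i} \mu_j \leq \sum_{j \leq i} \lambda_j$. Consequently, any nonzero $S_n$-homomorphism $\phi: S^\lambda \to M^\mu$ forces $\lambda \domgeq \mu$: extending $\phi$ to $\tilde\phi: M^\lambda \to M^\mu$ by Maschke, we have $\phi(e_t) = \kappa_t \tilde\phi(\{t\})$, and if this is nonzero for some $t$, expanding $\tilde\phi(\{t\})$ in $\mu$-tabloids exhibits some $\kappa_t\{s\} \neq 0$, and the dominance lemma applies. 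Since $S^\lambda \cong S^\mu$ supplies such a nonzero map in both directions (by composing with the inclusion $S^\mu \subseteq M^\mu$), we get $\lambda \domgeq \mu$ and $\mu \domgeq \lambda$, forcing $\lambda = \mu$.

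Finally, completeness follows by counting: the cycle-type map gives a bijection between conjugacy classes of $S_n$ and partitions of $n$, and the number of complex irreducible representations equals the number of conjugacy classes, so the list $\{S^\mu : \mu \vdash n\}$ of pairwise inequivalent irreducibles is necessarily complete. The main technical obstacle is establishing the sign lemma and its dominance refinement; once those are in hand, the representation-theoretic conclusions via Schur, Maschke, and character orthogonality are routine.
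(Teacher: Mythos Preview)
Your proposal is the standard James--Sagan argument and is correct. Note, however, that the paper does not actually prove this theorem: it is stated without proof in the background section, with the remark that the treatment follows Sagan. So there is no ``paper's own proof'' to compare against; your write-up is essentially the proof Sagan gives, which is what the paper is implicitly citing.
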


Hence, any irreducible representation \(\rho\) of \(S_{n}\) is isomorphic to some \(S^{\lambda}\); in this case, we say that \(\rho\) has Young diagram \(\lambda\). For example, \(S^{(n)} = M^{(n)}\) is the trivial representation; \(M^{(1^{n})}\) is the left-regular representation, and \(S^{(1^{n})}\) is the sign representation \textit{sgn}.

From now on we will write \([\lambda]\) for the equivalence class of the irreducible representation \(S^{\lambda}\), \(\chi_{\lambda}\) for the character \(\chi_{S^{\lambda}}\), and \(\xi_{\lambda}\) for the character of \(M^{\lambda}\). Notice that the set of \(\lambda\)-tabloids forms a basis for \(M^{\lambda}\), and therefore \(\xi_{\lambda}(\sigma)\), the trace of the corresponding permutation representation, is precisely the number of \(\lambda\)-tableaux fixed by \(\sigma\).

We now explain how the permutation modules \(M^{\mu}\) decompose into irreducibles.

\begin{definition}
Let \(\lambda,\mu\) be partitions of \(n\). A {\em \(\lambda\)-tableau} is produced by placing a number between 1 and \(n\) in each cell of the Young diagram of \(\lambda\); if it has \(\mu_{i}\) \(i\)'s \((1 \leq i \leq n)\) it is said to have {\em content} \(\mu\). A generalized \(\lambda\)-tableau is said to be {\em semistandard} if the numbers are non-decreasing along each row and strictly increasing down each column.
\end{definition}

  \begin{definition} \label{def:kostka}
  Let $\lambda, \mu$ be partitions of $n$. The {\em Kostka number}
  $K_{\lambda,\mu}$ is the number of semistandard generalized $\lambda$-tableaux
  with content $\mu$.
\end{definition}

\begin{theorem} \label{thm:young-rule}
  (Young's rule) Let $\mu$ be a partition of $n$. Then the permutation module
  $M^{\mu}$ decomposes as
  \[M^{\mu} \cong \oplus_{\lambda \vdash n} K_{\lambda, \mu}
  S^{\lambda}.\]
  Hence,
  \[\xi_{\mu}=\sum_{\lambda \vdash n} K_{\lambda,\mu}\chi_{\lambda}.\]
\end{theorem}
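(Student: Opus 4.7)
The plan is to deduce Young's rule from the Hom-space identity
\[
\dim_{\mathbb{R}} \mathrm{Hom}_{S_n}(S^\lambda, M^\mu) = K_{\lambda,\mu} \qquad (\lambda, \mu \vdash n).
\]
Once this is proved, the decomposition of $M^\mu$ follows by Maschke's theorem (complete reducibility of $\mathbb{R}[S_n]$-modules), Theorem~\ref{thm:spechtmodules} (the $S^\lambda$ exhaust the irreducibles of $S_n$ up to equivalence), and Schur's lemma: writing $M^\mu \cong \bigoplus_{\lambda \vdash n} m_{\lambda,\mu} S^\lambda$, one has $m_{\lambda,\mu} = \dim_{\mathbb{R}} \mathrm{Hom}_{S_n}(S^\lambda, M^\mu)$. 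The character identity $\xi_\mu = \sum_\lambda K_{\lambda,\mu} \chi_\lambda$ then follows by taking traces on both sides.

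To establish the Hom-space identity, I would exhibit an explicit basis of $\mathrm{Hom}_{S_n}(S^\lambda, M^\mu)$ indexed by the semistandard $\lambda$-tableaux of content $\mu$. Fix a reference $\lambda$-tableau $t_0$. For each such semistandard $T$, the candidate basis element $\theta_T \colon S^\lambda \to M^\mu$ is determined by its value on the polytabloid $e_{t_0}$: one places the entries of $T$ onto $t_0$ to obtain a generalized $\lambda$-tableau of content $\mu$, collapses to a $\mu$-tabloid by grouping cells along rows, applies the column antisymmetrizer $\kappa_{t_0}$, and extends $S_n$-equivariantly. The first nontrivial check is that $\theta_T$ is well-defined (i.e.\ respects the defining relations of the Specht module $S^\lambda$); this follows from the Garnir relations satisfied inside $M^\mu$.

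Linear independence of $\{\theta_T\}_T$ is then obtained by a leading-term argument: under a suitable total order on $\mu$-tabloids, the evaluations $\theta_T(e_{t_0})$ have distinct leading $\mu$-tabloids, the column-strictness of $T$ ensuring that this leading term survives the column antisymmetrization. The main obstacle, and the technically delicate step, is spanning. Given an arbitrary $\phi \in \mathrm{Hom}_{S_n}(S^\lambda, M^\mu)$, the image $\phi(e_{t_0})$ is invariant under the row-stabilizer $R_{t_0}$ and antisymmetric under the column-stabilizer $C_{t_0}$, so it may be written as $\kappa_{t_0}$ applied to an $R_{t_0}$-invariant element of $M^\mu$, which in turn expands as a combination of generalized $\lambda$-tableaux of content $\mu$. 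A straightening algorithm driven by the Garnir relations (each of which rewrites a non-column-strict placement as a signed sum of lexicographically smaller placements) then reduces this combination to one supported on semistandard tableaux, expressing $\phi$ as a combination of the $\theta_T$; termination is guaranteed by the lexicographic order on tableaux. This straightening step is purely combinatorial but is the heart of the proof.
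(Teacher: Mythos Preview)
The paper does not actually prove Theorem~\ref{thm:young-rule}: it is listed in the ``Basics'' subsection of Section~\ref{sec:rep-sn} as standard background from the representation theory of $S_n$ (the treatment follows Sagan~\cite{sagan}), on the same footing as the branching rule, the hook-length formula, and the irreducibility of the Specht modules. So there is no ``paper's own proof'' to compare against.

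Your outline is the standard textbook proof (essentially the one in James or Sagan): exhibit a basis of $\mathrm{Hom}_{S_n}(S^{\lambda},M^{\mu})$ indexed by semistandard $\lambda$-tableaux of content $\mu$, with spanning handled by Garnir straightening. The sketch is correct in broad strokes. One small remark: when you invoke Schur's Lemma over $\mathbb{R}$ to identify the multiplicity $m_{\lambda,\mu}$ with $\dim_{\mathbb{R}}\mathrm{Hom}_{S_n}(S^{\lambda},M^{\mu})$, you are implicitly using that the Specht modules are \emph{absolutely} irreducible (so that $\mathrm{End}_{S_n}(S^{\lambda})=\mathbb{R}$, not a larger division algebra); this is true for $S_n$ since everything is defined over $\mathbb{Q}$, but it is worth saying explicitly. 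Otherwise your argument is sound, and it simply supplies a proof where the paper chose to quote the result.
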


For example, \(M^{(n-1,1)}\), which corresponds to the natural permutation action of \(S_{n}\) on \([n]\), decomposes as
\[M^{(n-1,1)} \cong S^{(n-1,1)} \oplus S^{(n)},\]
and therefore
\(\xi_{(n-1,1)} = \chi_{(n-1,1)}+1\).

\remove{
Clearly, if $K_{\lambda, \mu} \neq 0$ then $\lambda \domgeq \mu$ (if \(t\) is a semistandard generalized \(\lambda\)-tableau of content \(\mu\), then all \(\mu_{j}\) \(j\)'s must appear in the first \(j\) rows of \(t\), so \(\sum_{j=0}^{i} \mu_{j} \leq \sum_{j=0}^{i} \lambda_{j}\ \forall i\)). Also, $K_{\lambda, \lambda} = 1\ \forall \lambda \vdash n$, since the generalized \(\lambda\)-tableau with \(\lambda_{i}\) i's in the \(i\)th row is the unique semistandard generalized \(\lambda\)-tableau of content \(\lambda\). Order the partitions of \(n\) under the lexicographic order on partitions, greatest first. Since the lexicographic order extends the dominance order, \(K\) is then upper-triangular, and it has 1's all along the diagonal. Hence, \(K\) is invertible, so any permutation character \(\xi_{\beta}\) can be expressed as a linear combination of irreducible characters \(\chi_{\alpha}\). For any finite group \(G\), the irreducible characters of \(G\) form a basis for the !
 space of complex-valued class functions on \(G\) (functions constant on each conjugacy-class of \(G\)). The characters of \(S_{n}\) are real-valued, so the irreducible characters form a basis for the space of real-valued class functions. Hence, by the above argument, so do the permutation characters. This will be useful to bear in mind in section 4.
}

The restriction of an irreducible representation of \(S_{n}\) to the subgroup \(\{\sigma \in S_{n}: \sigma(i)=i \ \forall i > n-k\} = S_{n-k}\) can be decomposed into irreducible representations of \(S_{n-k}\) as follows:

\begin{theorem} \label{thm:branching-rule}
  (The branching rule.) Let \(\alpha\) be a partition of \(n-k\), and \(\lambda\) a partition of \(n\). We write $\alpha \subset^k \lambda$ if the Young diagram of $\alpha$ can be produced from that of \(\lambda\) by sequentially removing \(k\) corners (so that after removing the \(i\)th corner, we have the Young diagram of a partition of \(n-i\).) Let \(a_{\alpha,\lambda}\) be the number of ways of doing this; then we have
  \[[\lambda] \downarrow S_{n-k} = \sum_{\alpha \vdash n-k:\ \alpha \subset^{k} \lambda} a_{\alpha,\lambda}[\alpha],\]
  and therefore
  \[\chi_{\lambda} \downarrow S_{n-k} = \sum_{\alpha \vdash n-k:\ \alpha \subset^{k} \lambda} a_{\alpha, \lambda}\chi_{\alpha}.\]
  \end{theorem}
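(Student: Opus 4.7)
The plan is to reduce to the case $k=1$ and then iterate. The $k=1$ case is the classical branching rule, saying that $S^\lambda \downarrow S_{n-1}$ decomposes as the direct sum $\bigoplus_{\lambda^- \subset^1 \lambda} S^{\lambda^-}$, where the sum runs over all partitions $\lambda^-$ of $n-1$ obtained from $\lambda$ by removing a single corner cell. For general $k$, the theorem then follows by induction: decompose $S^\lambda \downarrow S_{n-k}$ as $(S^\lambda \downarrow S_{n-k+1}) \downarrow S_{n-k}$ and apply the $k=1$ statement at each step. The multiplicity of $S^\alpha$ in the resulting decomposition counts sequences $\lambda = \lambda^{(0)} \supset \lambda^{(1)} \supset \cdots \supset \lambda^{(k)} = \alpha$ in which each $\lambda^{(i)}$ is obtained from $\lambda^{(i-1)}$ by removing a single corner, which is precisely $a_{\alpha,\lambda}$ as defined in the statement. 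Passing to characters is then automatic, since the character of a direct sum is the sum of characters.

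The substantive work is in proving the $k=1$ case, and for this the plan is to follow the filtration argument appearing in Sagan \cite{sagan}. Order the removable corners of $\lambda$ in some canonical way (say from highest to lowest row), obtaining partitions $\lambda^{(1)}, \ldots, \lambda^{(m)}$ of $n-1$. One builds a chain of $S_{n-1}$-submodules
\[
0 = V^{(0)} \subseteq V^{(1)} \subseteq \cdots \subseteq V^{(m)} = S^\lambda,
\]
where $V^{(i)}$ is spanned by polytabloids $e_t$ in which the symbol $n$ sits in one of the first $i$ removable corners of $t$ (extended $S_{n-1}$-equivariantly, since $S_{n-1}$ fixes $n$ and hence preserves the row in which $n$ lies in a tabloid). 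The key step is to exhibit, for each $i$, a well-defined surjective $S_{n-1}$-equivariant map $V^{(i)} \twoheadrightarrow S^{\lambda^{(i)}}$ sending a polytabloid with $n$ in the $i$th corner to the corresponding $\lambda^{(i)}$-polytabloid on $\{1,\ldots,n-1\}$, and whose kernel is exactly $V^{(i-1)}$. Since we are over $\mathbb{R}$ (characteristic zero), Maschke's theorem guarantees that every short exact sequence of $\mathbb{R}[S_{n-1}]$-modules splits, so the filtration becomes a direct sum decomposition, yielding the branching rule.

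The main obstacle is verifying correctness of the quotient identifications: one must check that the proposed map on polytabloids is consistent with the column-antisymmetrization defining polytabloids (i.e., that relations among polytabloids are respected), that images of polytabloids outside the $i$th stratum already lie in $V^{(i-1)}$ so that the induced map on the quotient is well defined, and that its image really generates $S^{\lambda^{(i)}}$. This bookkeeping hinges on the observation that once $n$ is frozen in a corner cell of $\lambda^{(i)}$, what remains of any $\lambda$-tableau is a genuine $\lambda^{(i)}$-tableau on $\{1,\ldots,n-1\}$, and the column stabilizer and row stabilizer of the truncated tableau behave compatibly with those of the original. The remaining steps (iteration, counting corner-removal sequences, and transition to characters) are then routine.
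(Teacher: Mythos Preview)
The paper does not actually prove this theorem: it is stated in the ``Basics'' subsection of Section~\ref{sec:rep-sn} as a classical background result, with the surrounding discussion referring the reader to Sagan~\cite{sagan}. Your proposal is correct and is precisely the standard filtration argument from Sagan's textbook that the paper is citing, so there is no discrepancy to report.
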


\begin{definition}
Let \(\lambda = (\lambda_{1},\ldots,\lambda_{k})\) be a partition of \(n\); if its Young diagram has columns of lengths \(\lambda_{1}' \geq \lambda_{2}' \geq \ldots \geq \lambda_{l}' \geq 1\), then the partition \(\lambda^{t} = (\lambda_{1}'\ldots,\lambda_{l}')\) is called the {\em transpose} of \(\lambda\), as its Young diagram is the transpose of that of \(\lambda\).
\end{definition}

\begin{theorem}
\label{thm:transpose-sign}
Let \(\lambda\) be a partition of \([n]\); then \([\lambda] \otimes [1^{n}] = [\lambda^{t}]\). Hence, \(\chi_{\lambda^{t}} = \chi_{\lambda} \cdot \textrm{sgn}\), and \(\dim[\lambda] = \dim[\lambda^{t}]\).
\end{theorem}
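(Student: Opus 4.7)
The plan is to show that tensoring with $\sgn$ is an involution on the set of isomorphism classes of irreducible representations of $S_n$, and that this involution agrees with transposition of partitions. Since $[1^n] = \sgn$ is one-dimensional, $[\lambda] \otimes [1^n]$ is carried by the same underlying vector space as $[\lambda]$, with action $\sigma \mapsto \sgn(\sigma)\,\rho_{\lambda}(\sigma)$; the two actions have identical invariant subspaces, so $[\lambda] \otimes [1^n]$ is irreducible. By Theorem \ref{thm:spechtmodules} there is therefore a unique $\mu \vdash n$ with $[\lambda] \otimes [1^n] \cong [\mu]$, and the task is to prove $\mu = \lambda^t$.

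To pin down $\mu$, I would use the Young symmetrizer realization of Specht modules (as in Sagan \cite{sagan}). For a $\lambda$-tableau $t$ with row-stabilizer $R_t$ and column-stabilizer $C_t$, set $a_t = \sum_{\rho \in R_t}\rho$, $b_t = \kappa_t = \sum_{\pi \in C_t}\sgn(\pi)\pi$ and $c_t = a_t b_t \in \mathbb{R}[S_n]$; then the left ideal $\mathbb{R}[S_n]\,c_t$ is isomorphic to $S^{\lambda}$ as an $\mathbb{R}S_n$-module. Next, define the sign-twist $\epsilon:\sigma \mapsto \sgn(\sigma)\sigma$, extended linearly to $\mathbb{R}[S_n]$; because $\sgn$ is a group homomorphism, $\epsilon$ is an algebra automorphism. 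If $t'$ denotes the transpose tableau (shape $\lambda^t$), so $R_{t'} = C_t$ and $C_{t'} = R_t$, then a direct calculation gives $\epsilon(a_t) = b_{t'}$, $\epsilon(b_t) = a_{t'}$ and therefore $\epsilon(c_t) = b_{t'} a_{t'}$.

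Now for any left ideal $M \subseteq \mathbb{R}[S_n]$, the restriction of $\epsilon$ to $M$ satisfies $\epsilon(\sigma x) = \sgn(\sigma)\,\sigma\, \epsilon(x)$, which is precisely the statement that $\epsilon: M \to \epsilon(M)$ is an $\mathbb{R}S_n$-module isomorphism from $M \otimes \sgn$ onto $\epsilon(M)$. Applied to $M = \mathbb{R}[S_n]\,c_t \cong S^{\lambda}$, this yields $S^{\lambda} \otimes \sgn \cong \mathbb{R}[S_n]\cdot b_{t'} a_{t'}$. It then remains to identify this ideal with $S^{\lambda^t}$: both $a_{t'}b_{t'}$ and $b_{t'}a_{t'}$ are nonzero scalar multiples of idempotents, and a standard argument (see Sagan \cite{sagan}) shows that the left ideals they generate are isomorphic $\mathbb{R}S_n$-modules, both realizing $S^{\lambda^t}$. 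This forces $\mu = \lambda^t$; taking traces at an arbitrary $\sigma$ gives $\chi_{\lambda^t}(\sigma) = \chi_{\lambda}(\sigma)\,\sgn(\sigma)$, and evaluation at the identity yields $\dim[\lambda] = \dim[\lambda^t]$.

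The main obstacle is the last identification, namely that $\mathbb{R}[S_n]\cdot a_{t'}b_{t'}$ and $\mathbb{R}[S_n]\cdot b_{t'}a_{t'}$ give isomorphic $\mathbb{R}S_n$-modules; everything else follows from general principles about tensoring with one-dimensional characters and the fact that sign-twisting is an algebra automorphism of the group algebra.
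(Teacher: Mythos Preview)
The paper does not actually prove this theorem; it is stated without proof in the ``Basics'' subsection as a standard fact from the representation theory of $S_n$, with Sagan \cite{sagan} cited as the general reference for that subsection. So there is no paper proof to compare against.

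Your argument is the standard Young-symmetrizer proof (essentially the one in Fulton--Harris), and it is correct. The one gap you flag, that $\mathbb{R}[S_n]\,a_{t'}b_{t'}$ and $\mathbb{R}[S_n]\,b_{t'}a_{t'}$ are isomorphic $\mathbb{R}S_n$-modules, is filled by a two-line Schur's-lemma argument: right multiplication by $a_{t'}$ gives an $\mathbb{R}S_n$-module map $\mathbb{R}[S_n]\,a_{t'}b_{t'} \to \mathbb{R}[S_n]\,b_{t'}a_{t'}$, sending $x\,a_{t'}b_{t'}$ to $(x\,a_{t'})\,b_{t'}a_{t'}$. This map is nonzero because $a_{t'}b_{t'}a_{t'} \neq 0$ (indeed $(a_{t'}b_{t'})^2 = a_{t'}b_{t'}a_{t'}b_{t'}$ is a nonzero scalar multiple of $a_{t'}b_{t'}$), and since both sides are irreducible, Schur's lemma makes it an isomorphism. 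With that, your proof is complete.
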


\begin{definition}
  The {\em hook} of a cell $(i,j)$ in the Young diagram of a partition $\mu$
  is $H_{i,j} = \{ (i,j') : j' \geq j \} \cup \{ (i',j) : i' \geq i \}$.
  The {\em hook length} of $(i,j)$ is $h_{i,j} = |H_{i,j}|$.
\end{definition}

\begin{theorem}[Frame, Robinson, Thrall \cite{frt}]
\label{hook-formula}
 If \(\lambda\) is a partition of \(n\) with hook lengths $(h_{i,j})$, then
  \begin{equation}
  \label{eq:hook}
    \dim [\lambda] = \frac{n!}{\prod_{i,j} h_{i,j}}.
  \end{equation}
\end{theorem}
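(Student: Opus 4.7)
The plan is to combine the branching rule with a probabilistic identity of Greene--Nijenhuis--Wilf. First, I would iterate the branching rule (Theorem \ref{thm:branching-rule}) down the chain $S_n \supset S_{n-1} \supset \cdots \supset S_0$. A sequence of one-cell removals $\emptyset = \mu^{(0)} \subset^1 \mu^{(1)} \subset^1 \cdots \subset^1 \mu^{(n)} = \lambda$ corresponds bijectively to a standard Young tableau of shape $\lambda$, by labelling the cell added at step $i$ with $i$. Since $\dim[\emptyset]=1$, induction on $n$ using Theorem \ref{thm:branching-rule} gives that $\dim[\lambda]$ equals the number $f^\lambda$ of standard Young tableaux of shape $\lambda$, reducing (\ref{eq:hook}) to the purely combinatorial claim $f^\lambda = n!/\prod h_{i,j}$.

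Looking at the cell that contains the largest entry of an SYT, we obtain the recursion
\[f^\lambda = \sum_{c} f^{\lambda \setminus c}, \qquad f^\emptyset = 1,\]
summed over inner corners $c$ of $\lambda$ (cells whose removal leaves a Young diagram). Denote the conjectured value by $F^\lambda := n!/\prod_{(i,j)\in\lambda} h_{i,j}^\lambda$. Since $F^\emptyset = 1$, an induction on $n$ reduces the problem to the single identity
\[\sum_{c} \frac{F^{\lambda \setminus c}}{F^\lambda} = 1.\]
Using the fact that $h_{a,b}^\lambda = 1$ when $c=(a,b)$ is a corner, that $h_{i,b}^\lambda - h_{i,b}^{\lambda\setminus c} = 1$ for $i<a$ and $h_{a,j}^\lambda - h_{a,j}^{\lambda\setminus c} = 1$ for $j<b$, and that all other hook lengths are unchanged, the ratio telescopes to
\[\frac{F^{\lambda \setminus c}}{F^\lambda} = \frac{1}{n}\prod_{i<a}\!\left(1 + \frac{1}{h_{i,b}^{\lambda\setminus c}}\right)\prod_{j<b}\!\left(1 + \frac{1}{h_{a,j}^{\lambda\setminus c}}\right).\]

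To prove that these fractions sum to $1$, I would interpret them probabilistically. Define a \emph{hook walk} on $\lambda$: pick a uniformly random starting cell $(i_0,j_0) \in \lambda$; while at a non-corner $(r,s)$, jump to a uniformly chosen element of $H_{r,s}\setminus\{(r,s)\}$; terminate upon hitting an inner corner. Then the required identity becomes the statement that the walk terminates at \emph{some} corner with probability $1$, which is automatic, provided one can show that the probability of terminating specifically at $c=(a,b)$ equals the product displayed above.

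The main obstacle is this last hook-walk probability computation. I would prove the stronger claim: conditioned on starting the walk at a specified cell $(a,j)$ in row $a$ of the hook of $(a,b)$ (resp.\ $(i,b)$ in column $b$), the conditional probability of terminating at $(a,b)$ equals the same product restricted to rows/columns strictly between the starting cell and $(a,b)$. This is proved by induction on the length of the walk, conditioning on the first step. The inductive step requires a non-obvious algebraic identity expressing a full product of terms $1+1/h$ as a weighted sum of partial products indexed by the possible first moves; this is verified by grouping terms according to whether the walk moves within row $a$, within column $b$, or into a different row/column, at which point the factors $1/h_{i,j}^{\lambda \setminus c}$ align so that the sum collapses. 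Once this inductive statement is in place, averaging over the uniform starting cell (a sum of $n$ terms, with weight $1/n$) yields the unconditional probability and hence the identity, completing the proof of the hook length formula.
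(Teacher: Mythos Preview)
The paper does not prove this theorem; it merely states it as a classical result and cites Frame--Robinson--Thrall \cite{frt}. So there is no ``paper's own proof'' to compare against.

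That said, your outline is the Greene--Nijenhuis--Wilf probabilistic proof, and it is essentially correct. A few remarks. First, the reduction $\dim[\lambda]=f^\lambda$ via iterated branching is fine (each $a_{\alpha,\lambda}$ in Theorem~\ref{thm:branching-rule} equals $1$ when a single corner is removed). Second, your displayed ratio $F^{\lambda\setminus c}/F^\lambda$ is correct, but note that the hook lengths appearing in the products can equivalently be written using the hooks of $\lambda$ itself, since $h_{i,b}^{\lambda\setminus c}=h_{i,b}^\lambda-1$ and $h_{a,j}^{\lambda\setminus c}=h_{a,j}^\lambda-1$; this is the form in which the hook-walk telescoping is usually carried out. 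Third, your description of the inductive step is slightly imprecise: the walk need not start in row $a$ or column $b$, so the clean inductive statement is rather that the probability a hook walk started at an arbitrary cell $(r,s)$ with $r\le a$, $s\le b$ terminates at $(a,b)$ equals
\[
\prod_{r\le i<a}\frac{1}{h_{i,b}^\lambda-1}\prod_{s\le j<b}\frac{1}{h_{a,j}^\lambda-1}\cdot\prod_{r\le i<a}(h_{i,b}^\lambda-1)\prod_{s\le j<b}(h_{a,j}^\lambda-1)\Big/\text{(appropriate normalisation)},
\]
or more simply, one sums over all monotone lattice paths from $(r,s)$ to $(a,b)$ and observes that the transition probabilities telescope along each path. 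Conditioning on the first step then gives the required identity. With that adjustment the argument goes through.

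For context, this is not the original 1954 proof of Frame--Robinson--Thrall, which proceeded by manipulating the determinantal formula $f^\lambda = n!\,\det\bigl(1/(\lambda_i-i+j)!\bigr)$; the hook-walk argument you sketch is due to Greene, Nijenhuis and Wilf (1979) and is generally regarded as the most transparent proof.
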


\subsection{Lemmas regarding representations and characters}
In this subsection we state and prove several lemmas regarding
representations of $S_n$ and their characters; these will be
instrumental in proving our main theorem.

\subsubsection{Dimensions of irreducible representations}
\begin{lemma} \label{thm:dimension-for-short-2}
  Let \(k \in \mathbb{N}\). Then there exists \(E_k >0\) depending on \(k\) alone such that for any irreducible representation \([\lambda]\) of \(S_n\) with all rows and columns of length greater than $n-k$, $\dim[\lambda] \geq E_{k}n^{k+1}$.
\end{lemma}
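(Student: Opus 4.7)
My plan is to apply the Frame--Robinson--Thrall hook length formula (Theorem \ref{hook-formula}): $\dim[\lambda] = n!/\prod_{(i,j)\in\lambda} h_{i,j}$. Bounding the dimension from below is therefore equivalent to bounding the hook-length product from above, and the hypothesis on $\lambda$ (which forces $\lambda$ to be ``far'' from both the trivial partition $(n)$ and the sign partition $(1^n)$) is exactly what controls this product. By Theorem \ref{thm:transpose-sign}, $\dim[\lambda] = \dim[\lambda^t]$, and the hypothesis is preserved by transposition, so I may assume without loss of generality that $\lambda_1 \geq \lambda_1^t$.

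I would dispose first of the \emph{hook case}, $\lambda = (m, 1^{n-m})$. Here the hook lengths can be enumerated directly: $h_{1,1} = n$; $h_{1,j} = m - j + 1$ for $j \geq 2$; and $h_{i,1} = n - m - i + 2$ for $i \geq 2$. Multiplying gives $\prod h_{i,j} = n \cdot (m-1)! \cdot (n-m)!$, so
\[
\dim[\lambda] = \binom{n-1}{m-1}.
\]
Under the hypothesis, both $m$ and $n - m + 1$ are bounded away from $n$, which forces $\min(m-1,\, n-m) \geq k+1$. Hence $\dim[\lambda] \geq \binom{n-1}{k+1} = \Omega(n^{k+1})$ for $n$ large in terms of $k$.

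For the \emph{non-hook case} ($\lambda_2 \geq 2$), I would bound the hook-length product more carefully. For cells $(1,j)$ in the first row, $h_{1,j} \leq n - j + 1$ since the hook consists of distinct cells of $\lambda$, giving $\prod_{j=1}^{\lambda_1} h_{1,j} \leq n!/(n - \lambda_1)!$. For cells $(i,j)$ with $i \geq 2$, the hook length is bounded by $\lambda_i + \lambda_j^t - i - j + 1 \leq \lambda_1 + \lambda_1^t - i - j + 1$, which is a constant (depending on $k$) fraction of $n$. A similar treatment applies to cells in the first column. Combining the three estimates and using that, under the hypothesis, $n - \lambda_1 \geq k+1$ (and likewise for $n - \lambda_1^t$), one obtains $\prod h_{i,j} \leq n!/(E_k n^{k+1})$ for a suitable $E_k > 0$ depending only on $k$.

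The main obstacle is the bookkeeping in the non-hook case: naive hook-length bounds lose factors of $n$, so one must use the non-hook structure to recover the correct exponent $k+1$ uniformly in $\lambda$. A cleaner alternative I would consider is induction on $k$ combined with the branching rule (Theorem \ref{thm:branching-rule}): since $\dim[\lambda] = \sum_\alpha \dim[\alpha]$ summed over removable corners, and each such $\alpha \vdash n - 1$ inherits the hypothesis at parameter $k-1$, induction yields $\dim[\alpha] \geq E_{k-1}(n-1)^k$ for every such $\alpha$. The remaining gap --- gaining the extra factor of $n$ needed to reach $n^{k+1}$ --- would be recovered by summing over sufficiently many corners when $\lambda$ has several of them, and by falling back on the explicit computation above when $\lambda$ has essentially one corner (the near-hook case).
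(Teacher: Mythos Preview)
Your hook case is fine, but both routes you propose for the non-hook case have genuine gaps.

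In the direct hook-length estimate you assert that for $i \geq 2$ the bound $h_{i,j} \leq \lambda_1 + \lambda_1^t - i - j + 1$ is ``a constant (depending on $k$) fraction of $n$''. The hypothesis only gives $\lambda_1,\lambda_1^t \leq n-k-1$, so this quantity can be as large as $2(n-k-1)-2$; for instance, if $\lambda = (\lceil n/2 \rceil, \lfloor n/2 \rfloor)$ the second-row hook lengths run up to roughly $n/2$, and the resulting product bound is far too weak. In the branching-rule alternative you plan to recover the missing factor of $n$ either from many corners or, in the ``essentially one corner'' case, from the explicit hook computation. But a partition with a single removable corner is a \emph{rectangle} $(a^b)$, not a hook; the $\sqrt{n}\times\sqrt{n}$ square has one corner, is not covered by your explicit computation, and a single branching step only yields $\dim[\lambda] \geq E_{k-1}(n-1)^k$, short by a factor of $n$.

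The paper's proof splits differently and imports an external tool. If some row or column of $\lambda$ has length between $ne/(e+1)$ and $n-k-1$, a hook-formula estimate (Claim~\ref{thm:dimension-for-long}: $\dim[\lambda] \geq \binom{n}{t}e^{-t}$ when the longest row or column has length $n-t$), combined with the monotonicity of $\binom{n}{t}e^{-t}$ in this range (Claim~\ref{thm:dimension-increases}), gives $\dim[\lambda] \geq e^{-(k+1)}\binom{n}{k+1}$. Otherwise \emph{every} row and column has length below $ne/(e+1)$, and the paper invokes Theorem~\ref{thm:dimension-for-short} (from \cite{mish96}) to get $\dim[\lambda] \geq (\alpha-\epsilon)^n$ for some $\alpha > 1$, which dominates any polynomial in $n$. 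It is precisely this second, ``balanced'' regime --- typified by the square --- that your sketch does not reach, and the paper handles it only by appealing to this asymptotic dimension bound.
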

To prove this lemma, we need two simple claims dealing with
irreducible representations with a relatively long row or column, and a separate
result dealing with the rest.

\begin{claim} \label{thm:dimension-for-long}
  Let $[\lambda]$ be an irreducible representation whose first row or column is of length
  $n-t$. Then
  \begin{equation} \label{eqn:dimension-for-long}
    \dim [\lambda] \geq \binom{n}{t} e^{-t}.
  \end{equation}
\end{claim}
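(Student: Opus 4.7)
The plan is to apply the hook length formula (Theorem~\ref{hook-formula}) after a convenient reduction. By Theorem~\ref{thm:transpose-sign}, $\dim[\lambda]=\dim[\lambda^{t}]$, so I may assume that it is the first \emph{row} of $\lambda$ that has length $n-t$; write $\lambda=(n-t,\mu_{1},\mu_{2},\ldots)$, where $\mu=(\mu_{1},\mu_{2},\ldots)\vdash t$ is the partition formed by the rows below the first.

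The central observation is that the hook-length product splits cleanly along the first row. A direct check from the definition (arm $+$ leg $+1$) shows that for each cell $(i,j)$ of $\lambda$ with $i\geq 2$ one has $h_{i,j}^{\lambda}=h_{i-1,j}^{\mu}$, because $\lambda_{i}=\mu_{i-1}$ and column $j$ of $\lambda$ is exactly one cell taller than column $j$ of $\mu$. Applying the hook length formula to $\mu$, the contribution of the cells strictly below the first row is therefore $t!/\dim[\mu]$. For the first row itself, cells with $j>\mu_{1}$ have hook $n-t-j+1$ and contribute $(n-t-\mu_{1})!$, while cells with $j\leq\mu_{1}$ have hook $(n-t-j+1)+\mu_{j}'$; pulling the factor $n-t-j+1$ out of each of the latter and combining with the tail produces a total first-row contribution of $(n-t)!\cdot Q$, where
\[
Q \;:=\; \prod_{j=1}^{\mu_{1}}\left(1+\frac{\mu_{j}'}{n-t-j+1}\right).
\]

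Everything then reduces to controlling $Q$. Since $\mu_{1}\leq\lambda_{1}=n-t$, every denominator is at least $1$, so $\mu_{j}'/(n-t-j+1)\leq\mu_{j}'$. Combining $1+x\leq e^{x}$ with $\sum_{j=1}^{\mu_{1}}\mu_{j}'=|\mu|=t$ gives $Q\leq e^{t}$. Hence
\[
\dim[\lambda] \;=\; \frac{n!}{\prod_{(i,j)} h_{i,j}^{\lambda}} \;=\; \binom{n}{t}\,\frac{\dim[\mu]}{Q} \;\geq\; \binom{n}{t}\,e^{-t},
\]
using $\dim[\mu]\geq 1$. There is no real obstacle in this argument; the only moment needing care is the bookkeeping that identifies hook lengths in rows $\geq 2$ of $\lambda$ with hook lengths of $\mu$, and this is a completely mechanical calculation from the definition.
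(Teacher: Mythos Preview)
Your proof is correct and follows essentially the same route as the paper: reduce to the row case by transposition, split the hook-length product along the first row, bound the below-first-row contribution by $t!$ via $\dim[\mu]\geq 1$, and show the first-row contribution is at most $(n-t)!\,e^{t}$. The only difference is cosmetic: where the paper bounds $\prod_{j}(1+c_{j}/j)$ by AM/GM and then uses $(1+t/(n-t))^{n-t}<e^{t}$, you apply $1+x\leq e^{x}$ factorwise and then bound the exponent by $\sum_{j}\mu_{j}'=t$, which is marginally more direct but amounts to the same estimate.
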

\begin{proof}
  Note that if \(\lambda\) has first column of length \(n-t\), then \(\lambda^{t}\) has first row of length \(n-t\). Since \(\dim[\lambda] = \dim[\lambda^{t}]\), we may assume that \(\lambda\) has first row of length \(n-t\).

By the hook formula (\ref{eq:hook}), it suffices to prove that
\[\prod_{i,j} h_{i,j} \leq t!(n-t)! e^{t}.\]
Delete the first row \(R_1\) of the Young diagram of \(\lambda\); the resulting Young diagram \(D\) corresponds to a partition of \(t\), and therefore a representation of \(S_t\), which has dimension
\[\frac{t!}{\prod_{(i,j) \in D} h_{i,j}} \geq 1.\]
Hence,
\[\prod_{(i,j) \in D} h_{i,j} \leq t!.\]
We now bound the product of the hook lengths of the cells in the first row; this is of the form
\[\prod_{(i,j) \in R_1} h_{i,j} = \prod_{j=1}^{n-t}(j+c_j),\]
where \(\sum_{j=1}^{n-t}c_j = t\). Using the AM/GM inequality, we obtain:
\begin{eqnarray*}\prod_{j=1}^{n-t} \frac{j+c_j}{j}
    & = &
    \prod_{j=1}^{n-t} \left( 1 + \frac{c_j}{j} \right) \\
     & \leq &
    \left( \sum_{j=1}^{n-t} \frac{1 + \frac{c_j}{j}}{n-t} \right)^{n-t} \\
    & \leq &
    \left( \frac{n-t + \sum_{j=1}^{n-t} c_j}{n-t} \right)^{n-t} \\
    & = &
    \left( 1 + \frac{t}{n-t} \right)^{n-t}\\
 & < & e^t.
  \end{eqnarray*}
Hence,
\[\prod_{i,j} h_{i,j} \leq t!(n-t)! e^{t},\]
as desired.
\end{proof}

Note that, if $t$ is suffiently small depending on \(n\), \({n \choose t}e^{-t}\) is an increasing function of \(t\):
\begin{claim} \label{thm:dimension-increases}
  Let $L(n,t) = {n \choose t} e^{-t}$. Then $L(n,t) \leq L(n,t+1)$
  for all $t \leq (n-e)/(e+1)$.
\end{claim}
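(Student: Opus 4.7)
The plan is to verify the inequality $L(n,t) \leq L(n,t+1)$ by a direct one-line computation of the ratio $L(n,t+1)/L(n,t)$, since both sides are positive and the ratio of consecutive binomial coefficients is completely explicit.

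First, I would write
\[
\frac{L(n,t+1)}{L(n,t)} = \frac{\binom{n}{t+1}}{\binom{n}{t}} \cdot \frac{e^{-(t+1)}}{e^{-t}} = \frac{n-t}{t+1} \cdot \frac{1}{e}.
\]
So the claim $L(n,t) \leq L(n,t+1)$ is equivalent to $\frac{n-t}{e(t+1)} \geq 1$, i.e. $n - t \geq e(t+1)$.

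Rearranging, this becomes $n - e \geq t(e+1)$, i.e. $t \leq (n-e)/(e+1)$, which is precisely the hypothesis. Hence the conclusion holds, and the proof is complete in a few lines with no real obstacle — the entire content is just the identification of the ratio $\binom{n}{t+1}/\binom{n}{t} = (n-t)/(t+1)$ and the algebraic rearrangement. There is nothing delicate to worry about: the bound $t \leq (n-e)/(e+1)$ is chosen precisely to make the arithmetic work out, so the statement is essentially the definition of the threshold up to which $L(n,\cdot)$ is monotone nondecreasing.
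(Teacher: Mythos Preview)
Your proof is correct and essentially identical to the paper's own proof: both compute the ratio of consecutive values of $L$ (the paper writes $L(n,t)/L(n,t+1) = e(t+1)/(n-t)$, you write the reciprocal) and then solve the resulting inequality to recover the threshold $t \leq (n-e)/(e+1)$.
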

\begin{proof}
Observe that
  \[
    \frac{L(n,t)}{L(n,t+1)}=
    \frac{e(t+1)}{n-t}.\]
  Solving for when this expression is at most 1 proves the claim.
\end{proof}

For the representations not covered by Claim
\ref{thm:dimension-for-long}, we use the following.
\begin{theorem}[\cite{mish96}]
\label{thm:dimension-for-short}
   If $\alpha, \epsilon > 0$, then there exists \(N(\alpha,\epsilon) \in \mathbb{N}\) such that for all $n >
  N(\alpha,\epsilon)$, any irreducible representation \([\lambda]\) of $S_n$
  which has all rows and columns of length at most $n / \alpha$ has
\[\dim[\lambda] \geq (\alpha-\epsilon)^n.\]
\end{theorem}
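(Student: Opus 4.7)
The plan is to apply the hook length formula (\ref{eq:hook}): proving $\dim[\lambda]\geq(\alpha-\epsilon)^n$ is equivalent to showing
\[
\prod_{(i,j)\in\lambda} h_{i,j}\ \leq\ \frac{n!}{(\alpha-\epsilon)^n},
\]
which by Stirling amounts to the estimate $\frac{1}{n}\sum_{(i,j)}\log h_{i,j}\leq \log(n/(\alpha-\epsilon))-1+o(1)$. So the entire problem reduces to sharp control of the sum of log-hooks.

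As motivation and sanity check, I would first handle the extremal case explicitly: when $\lambda$ is the $\alpha\times(n/\alpha)$ rectangle (assuming $\alpha\mid n$ for simplicity), the hooks satisfy $h_{i,j}=(n/\alpha-j)+(\alpha-i)+1$, giving $\prod h_{i,j}=\prod_{i=0}^{\alpha-1}(n/\alpha+i)!/i!$. A direct Stirling computation then yields $\dim[\lambda]\sim c_\alpha\,\alpha^n/n^{\binom{\alpha}{2}}$, which confirms both that the stated bound is tight up to polynomial factors and that the ``long-thin'' rectangle is the extremal shape to aim at.

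The main obstacle is that the naive tools are not sharp enough. Using the identity $\sum_{(i,j)}h_{i,j}=n+\sum_i\binom{\lambda_i}{2}+\sum_j\binom{\lambda_j'}{2}$ together with the hypotheses $\lambda_1,\lambda_1'\leq n/\alpha$, one gets $\sum h_{i,j}\leq n^2/\alpha+n$, and AM--GM applied to $\prod h_{i,j}$ then yields only $\dim[\lambda]\geq(\alpha/e)^n$ — off by a factor of $e^n$. The source of the slack is that hook lengths cannot all cluster near their average: every corner cell contributes a hook of length $1$, and hooks decrease systematically near the boundary of the diagram, so the arithmetic and geometric means of the $h_{i,j}$ are genuinely different in order.

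To close the $e^n$ gap, I would carry out a per-row (or per-slice) bookkeeping of the hooks, exploiting the decomposition $h_{i,j}=(\lambda_i-j)+(\lambda_j'-i)+1$. For each fixed row $i$ of length $\lambda_i\leq n/\alpha$, I would estimate $\sum_{j=1}^{\lambda_i}\log h_{i,j}$ by an integral comparison of the form
\[
\sum_{j=1}^{\lambda_i}\log\bigl((\lambda_i-j)+(\lambda_j'-i)+1\bigr)\ \leq\ \int_{0}^{\lambda_i}\log\bigl(s+\bar\ell_i(s)\bigr)\,ds + O(\log n),
\]
where $\bar\ell_i(s)$ is the appropriate (monotone, bounded by $n/\alpha$) profile of leg lengths in row $i$. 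Summing over $i$ and using a convexity/rearrangement argument under the constraints $\sum\lambda_i=n$, $\lambda_i,\lambda_j'\leq n/\alpha$, one shows that the total sum of log-hooks is maximized (to leading order) by the rectangular shape, which matches the extremal computation. This refined row-wise estimate, plus the rectangular benchmark, yields the sharp lower bound $(\alpha-\epsilon)^n$ for all $n>N(\alpha,\epsilon)$. The hard part is precisely this variational step — showing that any deviation from the rectangle can only decrease the product of hooks to leading exponential order — and it is the technical core of Mishchenko's argument in \cite{mish96}.
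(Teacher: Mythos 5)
You have not actually proved the statement; you have produced a plausible outline whose decisive step is left to the very reference the theorem is quoted from. Note first that the paper itself offers no proof to compare with: Theorem \ref{thm:dimension-for-short} is imported wholesale from \cite{mish96} and used as a black box, so the only question is whether your argument stands on its own. Your supporting computations are correct — the reduction via the hook formula (\ref{eq:hook}), the identity $\sum h_{i,j}=n+\sum_i\binom{\lambda_i}{2}+\sum_j\binom{\lambda_j'}{2}$, the observation that AM--GM only yields $(\alpha/e)^n$, and the Stirling asymptotics $c_\alpha\,\alpha^n/n^{\binom{\alpha}{2}}$ for the $\alpha\times(n/\alpha)$ rectangle all check out. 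But the entire content of the theorem is the variational claim that, among all shapes with row and column lengths at most $n/\alpha$, the sum of log-hooks is maximized (to leading exponential order) by the long rectangle, and at that point you only gesture at ``an integral comparison'' and ``a convexity/rearrangement argument'' and then state explicitly that this is the technical core of Mishchenko's argument. Asserting the key inequality and citing the source for it is a genuine gap, not a proof.

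Two further points would need attention even if you set out to fill that gap. First, your error bookkeeping is too lossy as written: an $O(\log n)$ error per row, summed over up to $n/\alpha$ rows, is $O(n\log n)$ in $\sum\log h_{i,j}$, i.e.\ a factor $e^{O(n\log n)}$ in the hook product, which swamps the margin $\bigl(\alpha/(\alpha-\epsilon)\bigr)^n=e^{\Theta(n)}$ you are allowed; you would need a one-sided comparison (using that $h_{i,j}$ is decreasing in $j$) or some other device keeping the total error to $e^{o(n)}$ or $e^{\delta n}$ with $\delta$ small. Second, the benchmark and the rearrangement step must be formulated for arbitrary real $\alpha>0$ (your rectangle computation assumes $\alpha\in\mathbb{N}$ with $\alpha\mid n$), and the profile $\bar\ell_i(s)$ and the constraint set over which you rearrange are never pinned down, so the ``maximized by the rectangle'' claim is not even a precise statement yet. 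Until the variational step is actually carried out with adequate error control, the proposal is a strategy, not a proof.
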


The proof of Lemma \ref{thm:dimension-for-short-2} is now immediate.

\begin{proof}[Proof of Lemma \ref{thm:dimension-for-short-2}:]
  If the partition \(\lambda\) contains a row or column of length between 
  $n-(n-e)/(e+1)$ and \(n-k-1\), then by Claims \ref{thm:dimension-for-long} and \ref{thm:dimension-increases},
\[\dim[\lambda] \geq e^{-(k+1)}{n \choose k+1} \geq E_k n^{k+1},\]
provided we choose \(E_k >0\) sufficiently small. Otherwise, the conditions of Theorem \ref{thm:dimension-for-short}
  hold with $\alpha = (e+1)/e - \epsilon'$ for some small
  $\epsilon' >0$, and therefore, by Theorem \ref{thm:dimension-for-short},
\[\dim[\lambda] \geq n^{k+1},\]
provided \(n\) is sufficiently large depending on \(k\), completing the proof.
\end{proof}

\subsubsection{Character tables and their minors}
We will be working with certain minors of the character table of \(S_n\). The following lemmas imply that certain related matrices are upper-triangular with 1's all along the diagonal.

\begin{lemma}
 \label{lemma:kostka}
If \(\lambda\),\(\mu\) are partitions of \(n\), let \(K_{\lambda,\mu}\) denote the Kostka number, the number of semistandard \(\lambda\)-tableaux of content \(\mu\). If \(K_{\lambda,\mu} \geq 1\), then \(\lambda \geq \mu\). Moreover, \(K_{\lambda,\lambda}=1\) for all \(\lambda\).
\end{lemma}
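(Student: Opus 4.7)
The plan is to prove both assertions by a direct combinatorial analysis of semistandard tableaux, exploiting the column-strict condition to force entries into early rows. Since the paper's lexicographic order $\geq$ extends the dominance order $\domgeq$, it suffices to prove the stronger statement $\lambda \domgeq \mu$ for the first part.

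First I would record the following basic observation. If $T$ is a semistandard $\lambda$-tableau and an entry $i$ occupies cell $(r,c)$, then the cells $(1,c),(2,c),\ldots,(r-1,c)$ above it must be filled with strictly increasing positive integers all smaller than $i$. In particular, these $r-1$ entries are distinct elements of $\{1,2,\ldots,i-1\}$, forcing $r-1 \leq i-1$, i.e.\ $r \leq i$. Thus in any semistandard $\lambda$-tableau, every entry equal to $i$ lies in one of the top $i$ rows.

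To obtain $\lambda \domgeq \mu$ from $K_{\lambda,\mu} \geq 1$, fix a semistandard $\lambda$-tableau $T$ of content $\mu$. By the observation just made, for every $i$, all entries in $\{1,2,\ldots,i\}$ appear in rows $1,2,\ldots,i$ of $T$. The total number of such entries in $T$ is $\mu_1+\mu_2+\cdots+\mu_i$, while the total number of cells available in the first $i$ rows is $\lambda_1+\lambda_2+\cdots+\lambda_i$. Hence
\[
\lambda_1+\lambda_2+\cdots+\lambda_i \;\geq\; \mu_1+\mu_2+\cdots+\mu_i
\]
for every $i$, which is precisely $\lambda \domgeq \mu$, and therefore $\lambda \geq \mu$ in the lexicographic order.

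For the equality statement $K_{\lambda,\lambda}=1$, the natural candidate is the tableau $T_0$ in which row $i$ is filled entirely with copies of $i$; this is visibly semistandard and has content $\lambda$, giving $K_{\lambda,\lambda}\geq 1$. For uniqueness, I would argue by induction on the row index. By the observation, every $1$ in $T$ lies in row $1$; since there are $\mu_1=\lambda_1$ ones and row $1$ has exactly $\lambda_1$ cells, row $1$ must consist entirely of $1$s. Having filled row $1$, every $2$ is confined to rows $1$ and $2$ but cannot lie in row $1$, so all $\lambda_2$ twos occupy row $2$, which has exactly $\lambda_2$ cells; continuing in this way row by row recovers $T=T_0$. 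I do not anticipate a serious obstacle — the only mild care needed is the counting argument that no entries $i$ can ``spill over'' into rows below the $i$th, which is exactly what the column-strict condition guarantees.
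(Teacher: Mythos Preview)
Your proof is correct and follows essentially the same approach as the paper: both argue that the column-strict condition forces every entry $i$ into the first $i$ rows, giving $\lambda \domgeq \mu$ (hence $\lambda \geq \mu$), and both identify the row-constant tableau as the unique semistandard $\lambda$-tableau of content $\lambda$. You simply supply more detail than the paper's terse version.
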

\begin{proof}
Observe that if there exists a semistandard generalized \(\lambda\)-tableau of content \(\mu\), then all \(\mu_{i}\) \(i\)'s must appear in the first \(i\) rows, so \(\sum_{j=0}^{i} \mu_{j} \leq \sum_{j=0}^{i} \lambda_{j}\) for each \(i\). Hence, \(\lambda \domgeq \mu\). Since the lexicographic order extends the dominance order, it follows that \(\lambda \geq \mu\). Observe that the generalized \(\lambda\)-tableau with \(\lambda_{i}\) \(i\)'s in the \(i\)th row is the unique semistandard generalized \(\lambda\)-tableau of content \(\lambda\), so \(K_{\lambda,\lambda}=1\) for every partition \(\lambda\).
\end{proof}

\begin{lemma} \label{thm:dominance-lemma}
  Let $\lambda$ be a partition of $n$, and let $\xi_{\lambda}$ be the character of
  the permutation module $M^{\lambda}$.  Let $\sigma \in S_n$. If
  $\xi_{\lambda}(\sigma) \neq 0$, then $\textrm{cycle-type}(\sigma) \domleq \lambda$. Moreover, if \(\textrm{cycle-type}(\sigma) = \lambda\), then \(\xi_{\lambda}(\sigma) = 1\).
\end{lemma}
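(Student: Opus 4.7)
The plan is to use the combinatorial interpretation of $\xi_\lambda(\sigma)$. Since the permutation module $M^\lambda$ has the set $X^\lambda$ of $\lambda$-tabloids as a basis on which $S_n$ acts by permutation of tabloids, the trace $\xi_\lambda(\sigma)$ counts the $\lambda$-tabloids fixed by $\sigma$. A $\lambda$-tabloid with rows $R_1,\ldots,R_k$ is fixed by $\sigma$ exactly when $\sigma(R_i)=R_i$ for each $i$, which is equivalent to each $R_i$ being a union of cycles of $\sigma$. So fixed tabloids are in bijection with the ways of partitioning the multiset of cycles of $\sigma$ into $k$ ordered groups whose cycle-lengths sum respectively to $\lambda_1,\lambda_2,\ldots,\lambda_k$.

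For the first claim, set $\mu=\textrm{cycle-type}(\sigma)$ and suppose such a partition exists. For any $i$, the $i$ longest cycles of $\sigma$ have total length $\mu_1+\cdots+\mu_i$ and collectively lie in at most $i$ rows of the tabloid; call this set of rows $S$. Since $\lambda$ is non-increasing, $\sum_{j\in S}\lambda_j \leq \lambda_1+\cdots+\lambda_{|S|}\leq \lambda_1+\cdots+\lambda_i$, and so
\[ \mu_1+\cdots+\mu_i \;\leq\; \sum_{j\in S}\lambda_j \;\leq\; \lambda_1+\cdots+\lambda_i .\]
This is exactly the majorization inequality $\mu \domleq \lambda$.

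For the ``moreover'' clause, assume $\textrm{cycle-type}(\sigma)=\lambda$. The displayed chain of inequalities is then forced to be an equality for every $i$: the $i$ longest cycles of $\sigma$ must completely fill exactly the $i$ longest-capacity rows. Iterating this observation from $i=1$ upward, the cycle of length $\lambda_j$ must land in the row of size $\lambda_j$, which pins down a unique fixed $\lambda$-tabloid and yields $\xi_\lambda(\sigma)=1$. I expect this equality-case rigidity argument to be the only non-routine step: one must verify, using tightness of the chain at every $i$, that no ``mixed'' assignment of shorter cycles into a longer row can satisfy the constraints without breaking equality further down. This is precisely where the non-increasing orderings of both the cycle-length sequence and the row-length sequence are genuinely used, and it is the main (if still elementary) obstacle in the proof.
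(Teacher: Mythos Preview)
Your treatment of the dominance claim is correct and is essentially the paper's argument: both use that each row of a fixed $\lambda$-tabloid is a union of cycles of $\sigma$. The paper packages this as ``the cycle-type of $\sigma$ is a refinement of $\lambda$, hence $\domleq\lambda$'', whereas you verify the partial-sum inequalities $\mu_1+\cdots+\mu_i\le\lambda_1+\cdots+\lambda_i$ directly; the two are equivalent.

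For the ``moreover'' clause you are right to be suspicious of the rigidity step, because in fact it cannot be carried out: the assertion $\xi_\lambda(\sigma_\lambda)=1$ is false in general. Your argument correctly yields that when $\textrm{cycle-type}(\sigma)=\lambda$, any fixed tabloid places each cycle of $\sigma$ by itself in a row of the same length. But if $\lambda$ has repeated parts this does \emph{not} determine a unique tabloid, since cycles of a given length can be permuted among the several rows of that length. For instance, with $\lambda=(2,2)$ and $\sigma=(1\,2)(3\,4)$ there are two fixed tabloids, so $\xi_{(2,2)}(\sigma)=2$; in general $\xi_\lambda(\sigma_\lambda)=\prod_i m_i(\lambda)!$, where $m_i(\lambda)$ is the multiplicity of the part $i$. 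The paper's one-line justification (``it fixes just one $\lambda$-tabloid, the one whose rows correspond to the cycles of $\sigma$'') contains the same oversight. This does not harm the paper's application: the only downstream use of this clause is to see that the matrix $\tilde{D}$ of permutation characters is upper-triangular with nonzero diagonal, and $\prod_i m_i(\lambda)! \ge 1$ already gives that.
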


\begin{proof}
  The set of $\lambda$-tabloids is a basis for the permutation module $M^{\lambda}$. Thus, $\xi_{\lambda}(\sigma)$, which is the trace of the corresponding representation on the permutation \(\sigma\), is simply the number of \(\lambda\)-tabloids fixed by \(\sigma\). If $\xi_{\lambda}(\sigma) \neq 0$, then $\sigma$ fixes some \(\lambda\)-tabloid \([t]\). Hence, every row of length $l$
  in \([t]\) is a union of the sets of numbers in a collection of disjoint cycles of total
  length $l$ in $\sigma$. Thus, the cycle-type of \(\sigma\) is a refinement of \(\lambda\), and therefore $\textrm{cycle-type}(\sigma) \domleq \lambda$, as required. If \(\sigma\) has cycle-type \(\lambda\), then it fixes just one \(\lambda\)-tabloid, the one whose rows correspond to the cycles of \(\sigma\), so \(\xi_{\lambda}(\sigma) = 1\). 
\end{proof}

\begin{theorem} \label{thm:minor-is-invertible}
  Let $C$ be the character table of $S_n$, with rows and columns
  indexed by partitions / conjugacy classes in decreasing lexicographic
  order (so $C_{\lambda, \mu} = \chi_{\lambda}(\sigma_{\mu})$ where
  $\sigma_{\mu}$ is a permutation with cycle-type $\mu$, and the
  top-left corner of $C$ is $\chi_{(n)}(\sigma_{(n)})$.) Then the
  contiguous square minor \(\tilde{C}\) of \(C\) with rows and columns $\psi : \psi >
  (n-k,1^k)$ is invertible and does not depend on $n$, provided \(n > 2k\).
\end{theorem}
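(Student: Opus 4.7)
The plan is to transfer the problem from the character table $C$ to the matrix $Y$ of permutation characters, $Y_{\lambda,\mu} = \xi_\lambda(\sigma_\mu)$, where a block structure is much more transparent. By Young's rule (Theorem~\ref{thm:young-rule}), $\xi_\lambda = \sum_\gamma K_{\gamma,\lambda}\chi_\gamma$, so $Y = K^T C$, where $K$ is the Kostka matrix; by Lemma~\ref{lemma:kostka}, $K$ is upper-triangular with unit diagonal in the decreasing lex order.

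First I would describe the index set $S = \{\psi \vdash n : \psi \geq (n-k,1^k)\}$: since $(1^k)$ is the lex-smallest partition of $k$, $S$ coincides with $\{\psi : \psi_1 \geq n-k\}$, an initial segment in the decreasing lex order. Every $\psi \in S$ can be written uniquely in the form $\psi = (n-i, \psi')$ with $0 \leq i \leq k$ and $\psi' \vdash i$, and for $n > 2k$ every such pair yields a valid partition (since $\psi'_1 \leq k < n-k \leq n-i$), so $S$ is in canonical bijection with the $n$-independent set of pairs $(i,\psi')$. Because $S$ is an initial segment of the decreasing lex order, the relation $Y = K^T C$ truncates cleanly to $\tilde Y = \tilde K^T \tilde C$, and $\tilde K$ remains upper-triangular with unit diagonal, hence invertible. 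So it suffices to show that $\tilde Y$ (and $\tilde K$) is invertible and $n$-independent.

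The crucial step is a combinatorial reduction for $\tilde Y$. For $\lambda = (n-i,\lambda')$ and $\mu = (n-j,\mu')$ in $S$, $\xi_\lambda(\sigma_\mu)$ counts $\lambda$-tabloids fixed by $\sigma_\mu$, i.e.\ the number of ways to write each row as a union of $\sigma_\mu$-cycles. Now $\sigma_\mu$ has a unique ``long'' cycle of length $n-j > k$, while every row of $\lambda$ after the first has length at most $k$. So the long cycle must sit entirely inside row 1, forcing $i \leq j$ (the entry is $0$ otherwise); and when $i \leq j$, the remaining $j$ elements must fill the $j-i$ leftover cells of row 1 together with the rows of $\lambda'$, as unions of the remaining cycles of type $\mu'$. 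That count depends only on $(i,\lambda',j,\mu')$, not on $n$. A parallel ``all $n-j$ 1's are forced into row 1'' argument for semistandard tableaux shows $\tilde K_{\lambda,\mu}$ is $n$-independent as well.

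Finally, the vanishing when $i > j$ makes $\tilde Y$ block upper-triangular in the parameter $i \in \{0,1,\ldots,k\}$, and the diagonal block at $i = j$ is exactly the permutation-character matrix $(\xi_{\lambda'}(\sigma_{\mu'}))_{\lambda',\mu' \vdash i}$ of $S_i$. Applying Young's rule inside $S_i$ expresses this block as the product of a Kostka-transpose (upper-triangular, unit diagonal) and the character table of $S_i$, both invertible; so the block is invertible. Hence $\tilde Y$ is invertible, and so $\tilde C = (\tilde K^T)^{-1}\tilde Y$ is invertible and $n$-independent. The main obstacle --- and the real content of the theorem --- is the peeling step: the dimensional gap between the unique long cycle/row and all the short ones, guaranteed by $n > 2k$, is exactly what decouples the problem from $n$.
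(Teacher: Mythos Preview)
Your proof is correct and follows the same route as the paper: factor $Y = K^{T} C$, note that the index set is an initial segment of the decreasing lex order so the factorization restricts to $\tilde Y = \tilde K^{T}\tilde C$, and use the long-cycle/long-row peeling argument (enabled by $n>2k$) to see that $\tilde K$ and $\tilde Y$ are $n$-independent. The paper's invertibility argument for $\tilde Y$ (its $\tilde D$) is marginally slicker---Lemma~\ref{thm:dominance-lemma} already gives that the full matrix $D$ is upper-triangular with unit diagonal, so no block decomposition into $S_i$-character tables is needed; also, the theorem's index set is $\psi > (n-k,1^k)$ rather than $\geq$, though your argument works unchanged for either.
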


\begin{proof}
  Let \(K\) be the Kostka matrix, and let \(D\) be the matrix of permutation characters,
\begin{equation}
    D_{\lambda, \mu} = \xi_{\lambda}(\sigma_{\mu}),
  \end{equation}
  where $\sigma_{\mu}$ denotes a permutation with cycle-type $\mu$. Let \(\tilde{K}\) and \(\tilde{D}\) denote the top-left minor of \(K\) and \(D\) respectively (i.e. the minor with rows and columns $\psi : \psi >
  (n-k,1^k)$).

  Recall that by Young's rule (Theorem
  \ref{thm:young-rule}), we have
  \begin{equation}
    M^{\mu} \cong \oplus_{\lambda} K_{\lambda, \mu} S^{\lambda},
  \end{equation}
  and therefore
  \begin{equation} \label{eqn:decomposition-of-m-mu}
    \xi_{\mu} = \sum_{\lambda} K_{\lambda, \mu} \chi_{\lambda}.
  \end{equation}
  Hence,
  \begin{equation}
\label{eq:ident}
    (K^t C)_{\lambda,\mu} = \sum_{\tau} K_{\tau, \lambda} C_{\tau, \mu} =
    \sum_{\tau} K_{\tau, \lambda} \chi_{\tau}(\sigma_{\mu}) = \xi_{\lambda}(\sigma_\mu) =
    D_{\lambda,\mu},
  \end{equation}
  and therefore
  \begin{equation}
\label{eq:prod}
    K^t C = D.
  \end{equation}

Since the rows and columns of \(K\) are sorted in decreasing lexicographic order, \(K\) is upper-triangular with 1's all along the diagonal, by Lemma \ref{lemma:kostka}. Therefore, \(K^{t}\) is lower-triangular with 1's all along the diagonal.

Since \(K^t\) is lower-triangular, in addition to (\ref{eq:prod}), we also have
\begin{equation}
\tilde{K}^t \tilde{C} = \tilde{D}.
\end{equation}
Since \(\tilde{K}^t\) is lower-triangular with 1's all along the diagonal, it is invertible, and therefore
\[\tilde{C} = (\tilde{K}^t)^{-1}\tilde{D}.\]
By Lemma \ref{thm:dominance-lemma}, \(\tilde{D}\) is upper-triangular with 1's all along the diagonal, and is therefore invertible. It follows that \(\tilde{C}\) is invertible also.

We will now show that \(\tilde{K}\) and \(\tilde{D}\) are independent of \(n\), provided \(n > 2k\); this will prove that \(\tilde{C}\) is also independent of \(n\).

 Let $\lambda >
  (n-k,1^k)$ be a partition. Then $\lambda_1 \geq n-k$.  Write
  $\lambda' = (\lambda_1 - (n-k), \lambda_2, \ldots)$. (Note that this may not be a {\em bona fide} partition, as it may not be in non-increasing order.) Now the
  mapping $\lambda \mapsto \lambda'$ has the same image over $\{
  \lambda : \lambda > (n-k,1^k) \}$ for all $n \geq 2k$:
  namely, `partitions' of $k$ where the first row is not necessarily the
  longest.

We first consider \(K\). Recall once again that $K_{\lambda,\mu}$ is the number of semistandard \(\lambda\)-tableaux of content $\mu$. Let $t$ be a semistandard
  \(\lambda\)-tableau of content $\mu$; we now count the number of choices for \(t\). Since the numbers in a semistandard
  tableau are strictly increasing down each column and non-increasing along each row, and \(\mu_1 \geq n-k\), we must always place 1's in the first \(n-k\) cells of the first row of \(t\). We must now fill the rest of the cells with content \(\mu'\). Provided \(n \geq 2k\), \(\mu'\) is independent of \(n\), and the remaining cells in the first row have no cells below them, so the number of ways of doing this is independent of \(n\). Hence, the entire minor \(\tilde{K}\) is independent of $n$.

  Now consider $D$. Recall that \(D_{\lambda,\mu} = \xi_{\lambda}(\sigma_{\mu})\) is simply the number of \(\lambda\)-tabloids fixed by $\sigma_{\mu}$. To count these, first note that the numbers in the long cycle
  of $\sigma$ (which has length at least \(n-k\)) must all be in the first row of the \(\lambda\)-tabloid (otherwise
  the long cycle of $\sigma$ must intersect two or more rows, as \(n-k > k\).)
  This leaves us with a $(\lambda_1 - \mu_1, \lambda_2,
  \ldots, \lambda_r)$-`tableau', which we need to fill with the remaining $n -
  \mu_1$ elements in such a way that $\sigma$ fixes it.  It is
  easy to see that, again, the number of ways of doing this is independent of \(n\).
\end{proof}

In particular, if \(n \geq 2k\), the number of partitions \(\lambda\) of \(n\) such that \(\lambda \geq (n-k,1^k)\) is independent of \(n\); we denote it by \(q_k\). Note that
\[q_k = \sum_{t=0}^{k}p_t,\]
where \(p_t\) denotes the number of partitions of \(t\).

We need a slightly more general result, which allows us to split some of the partitions.

\begin{definition}\label{split}
  Assume that \(n > 3k+1\), and let $\mu = (\mu_1, \ldots, \mu_r)$ be a partition of $n$ with $\mu_1 \geq n-k$. We define
$$\Split(\mu) = (\mu_1 - k-1, k+1, \mu_2, \ldots, \mu_r).$$
\end{definition}

It is easy to see that $\Split(\mu)$ is indeed a partition (i.e. it is in
descending order). Further, exactly one of $\mu$ and $\Split(\mu)$ is
even.

\begin{theorem} \label{thm:minor-is-invertible-2}
  Let $C$ be as above, and let \(n > 3k+1\). Let $\phi_1, \ldots, \phi_{q_k-1}$ be the
  partitions $>(n-k,1^k)$. Let $\mu_1, \ldots,
  \mu_{q_k-1}$ be partitions such that, for each $j$, either
  $\mu_j = \phi_j$ or else $\mu_j = \Split(\phi_j)$. Then the square minor \(\breve{C}\) of $C$ with \(i\)th row
  $\phi_i$ and \(j\)th column $\mu_j$ is independent of the choices of the \(\mu_j\)'s, so is always equal to the top-left minor \(\tilde{C}\).
\end{theorem}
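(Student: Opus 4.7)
My plan is to reduce the claim $\breve{C} = \tilde{C}$ to an equality between permutation-character minors, and then verify that equality by a direct tabloid-counting argument.

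First, I would define the analogue $\breve{D}$ of $\tilde{D}$ for the split columns, i.e.\ $\breve{D}_{\phi_i, \mu_j} = \xi_{\phi_i}(\sigma_{\mu_j})$. The identity $K^t C = D$ from equation (\ref{eq:prod}) of Theorem \ref{thm:minor-is-invertible} restricts cleanly to the indicated block, because $K$ is upper-triangular in lexicographic order (Lemma \ref{lemma:kostka}): for any $\phi_i > (n-k,1^k)$, every $\tau$ with $K_{\tau,\phi_i} \neq 0$ satisfies $\tau \geq \phi_i > (n-k,1^k)$, so $\tau$ lies in our restricted index set. Therefore
\[
(\tilde{K}^t \breve{C})_{\phi_i, \mu_j} \;=\; \sum_{\tau > (n-k,1^k)} K_{\tau, \phi_i}\, \chi_{\tau}(\sigma_{\mu_j}) \;=\; \sum_{\tau \vdash n} K_{\tau, \phi_i}\, \chi_{\tau}(\sigma_{\mu_j}) \;=\; \xi_{\phi_i}(\sigma_{\mu_j}),
\]
so $\tilde{K}^t \breve{C} = \breve{D}$. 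Combined with $\tilde{K}^t \tilde{C} = \tilde{D}$ and the invertibility of $\tilde{K}^t$, it will suffice to prove $\breve{D} = \tilde{D}$, i.e.\ $\xi_{\phi_i}(\sigma_{\phi_j}) = \xi_{\phi_i}(\sigma_{\mathrm{Split}(\phi_j)})$ for every choice of split columns.

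The main step is this last combinatorial identity, which I would prove using the interpretation of $\xi_{\lambda}(\sigma)$ as the number of $\lambda$-tabloids fixed by $\sigma$ (Lemma \ref{thm:dominance-lemma}). A $\lambda$-tabloid is fixed by $\sigma$ iff each cycle of $\sigma$ lies entirely within a single row; equivalently, we must partition the multiset of cycles of $\sigma$ into groups whose length-sums realize $\lambda_1,\lambda_2,\ldots$. For $\phi_i > (n-k,1^k)$ we have $\phi_{i,1} \geq n-k$ and $\phi_{i,s} \leq k$ for $s \geq 2$, so \emph{every} cycle of length strictly greater than $k$ must be placed in row~1. In $\sigma_{\phi_j}$ there is exactly one such cycle, of length $\phi_{j,1} \geq n-k > k$, contributing $\phi_{j,1}$ elements to row~1; in $\sigma_{\mathrm{Split}(\phi_j)}$, the two long cycles have lengths $\phi_{j,1}-k-1 \geq n-2k-1 > k$ (using $n > 3k+1$) and $k+1 > k$, both forced into row~1 and together contributing the same $\phi_{j,1}$ elements. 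In either case, the residual enumeration problem is identical: distribute the remaining cycles (of lengths $\phi_{j,2}, \phi_{j,3}, \ldots$, a list depending only on $\phi_j$) among row~1 (which has $\phi_{i,1} - \phi_{j,1}$ remaining cells) and rows $2,3,\ldots$ (of sizes $\phi_{i,2},\phi_{i,3},\ldots$). This gives $\xi_{\phi_i}(\sigma_{\phi_j}) = \xi_{\phi_i}(\sigma_{\mathrm{Split}(\phi_j)})$, and the conclusion $\breve{C} = \tilde{C}$ follows.

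The only subtlety I anticipate is verifying that the length inequalities used above really do force each long cycle into row~1 regardless of which particular $\mu_j$'s are chosen to be split — in particular that $\phi_{j,1} - k - 1 > k$ and $k+1 > \phi_{i,s}$ for all $s \geq 2$ simultaneously for all $\phi_i, \phi_j > (n-k,1^k)$, which is precisely guaranteed by the assumption $n > 3k+1$ built into Definition \ref{split}. Once this is checked the remainder is bookkeeping, and the final sentence of the theorem (independence from the choices of which columns to split) drops out because the right-hand side $\tilde{C}$ of $\breve{C} = \tilde{C}$ does not involve the $\mu_j$'s at all.
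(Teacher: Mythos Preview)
Your proposal is correct and follows essentially the same approach as the paper: both arguments first establish $\xi_{\phi_i}(\sigma_{\phi_j}) = \xi_{\phi_i}(\sigma_{\Split(\phi_j)})$ by the tabloid-counting observation that every cycle of length exceeding $k$ is forced into the first row (so the two long cycles of $\Split(\phi_j)$ contribute the same $\phi_{j,1}$ elements to row~1 as the single long cycle of $\phi_j$), and then transfer this to the irreducible characters via the upper-triangular Kostka relation between $\{\xi_\lambda\}$ and $\{\chi_\lambda\}$ on the block $\lambda > (n-k,1^k)$. Your matrix formulation $\tilde{K}^t \breve{C} = \breve{D}$ is just a slightly more explicit rendering of the paper's remark that these two families span the same linear space.
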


\begin{proof}
  It is easy to see that if \(\lambda,\mu > (n-k,1^{k})\), \(\sigma\) has cycle-type \(\mu\) and \(\sigma'\) has cycle-type \(\Split(\mu)\), then in fact, \(\xi_{\lambda}(\sigma) = \xi_{\lambda}(\sigma')\). (All rows of a \(\lambda\)-tabloid below the first have length at most \(k\). Hence, if a permutation \(\sigma'\) with cycle-type \(\Split(\mu)\) fixes a \(\lambda\)-tabloid \([t]\), the numbers in \((\mu_1-k-1)\)-cycle and the \((k+1)\)-cycle must all lie in the first row of \([t]\). It follows that a permutation \(\sigma\) produced by merging these two cycles of \(\sigma'\) fixes exactly the same \(\lambda\)-tabloids as \(\sigma'\) does.) Since the Kostka matrix \(K\) is upper-triangular with 1's all down the diagonal, \(\{\xi_{\lambda}:\ \lambda > (n-k,1^k)\}\) and \(\{\chi_{\lambda}:\ \lambda > (n-k,1^k)\}\) are bases for the same linear space, and therefore
\[\chi_{\lambda}(\sigma) = \chi_{\lambda}(\sigma')\]
for each \(\lambda > (n-k,1^k)\). Thus,
\[C_{\lambda,\Split(\mu)} = C_{\lambda,\mu}\quad \forall \lambda,\mu > (n-k,1^k),\]
as required.
\end{proof}

\subsubsection{Functions with Fourier transform concentrated
  on the `fat' irreducible representations}

One of the recurring themes in applications of discrete Fourier
analysis to combinatorics is proving that certain functions depend on
few coordinates, by showing that their Fourier transform is
concentrated on the `low frequencies', the characters indexed by small
sets. Examples of this can be found for example in \cite{Bourgain}, \cite{F98} and \cite{FKN}. In this paper, we need a non-Abelian analogue. We show that
functions on $S_n$ whose Fourier transform is supported on
irreducible representations which are large with respect to the
lexicographic order, are spanned by the cosets of the pointwise stabilizers of small
sets.

\begin{definition}
  Let $V_k$ be the linear space of functions whose Fourier transform
  is supported only on representations $\geq (n-k,1^k)$.
\end{definition}
We are now ready to prove
\begin{thm:repeat}
  $V_k$ is the span of the $k$-cosets.
\end{thm:repeat}
\begin{proof}
  First, we show that the characteristic function of any $k$-coset is
  indeed in $V_k$. Let $T = T_{a_1 \mapsto b_1, \ldots, a_k
  \mapsto b_k}$. It is easy to check that if \(f:S_n \to \mathbb{R}\), \(\tau,\pi \in S_n\), and
\begin{eqnarray*}
g:S_n & \to & \mathbb{R}\\
\sigma & \mapsto & f(\pi \sigma \tau),
\end{eqnarray*}
i.e. the function \(g\) is a `double-translate' of \(f\), then \(\hat{f}(\rho) = 0\Rightarrow \hat{g}(\rho)=0\). Hence, if \(f \in V_k\), so is \(g\). (This is saying that \(V_k\) is a `two-sided ideal' of the group algebra \(\mathbb{R}[S_n]\).)

Hence, by double-translation, without loss of generality, we may assume that \(a_i = b_i = i\) for each \(i \in [k]\), i.e. \(T = T_{1 \mapsto 1,\ldots,k \mapsto k}\), so \(T \cong S_{n-k}\). We use
  the branching rule (Theorem \ref{thm:branching-rule}.) If $\rho <
  (n-k,1^k)$, then $\rho$ has at least $k+1$ cells outside the first
  row. In that case, every $\mu \subset^k \rho$ is a nontrivial irreducible
  representation of $S_{n-k}$. We claim that this implies
\[\sum_{\sigma \in S_{n-k}} \mu(\sigma)=0.\]
To see this, observe that the linear map \(\sum_{\sigma \in S_{n-k}} \mu(\sigma)\) commutes with \(\mu\), and therefore by Schur's Lemma, it must be a scalar multiple of the identity map. Its trace is zero, since \(\chi_{\mu}\) is orthogonal to the trivial character, so it must be the zero map. Hence, $\widehat{1_T}(\rho) = 0$ for each \(\rho < (n-k,1^k)\).

  (Note also that if $\rho = (n-k,1^k)$, then the only $\mu
  \subset^k \rho$ with nonzero sum on $T$ is that corresponding to the
  partition $(n-k)$, i.e. the trivial representation, which has sum
  $(n-k)!$ on $T$. Therefore, \(\widehat{1_{T}}((n-k,1^k)) \neq 0\). This will be useful in the proof of Theorem \ref{thm:linear-exist}.)

  In the other direction, let $f \in V_k$. Using the Fourier inversion
  formula, we have
  \begin{equation}
    f(\sigma) =
    \sum_{\rho \geq (n-k,1^k)}
         \dim[\rho] \tr \left[ \Hat{f}(\rho) \rho(\sigma^{-1}) \right].
  \end{equation}
  Consider the permutation module $M^{(n-k,1^k)}$. By Young's Rule (Theorem \ref{thm:young-rule}), $M^{(n-k,1^k)}$ must contain at least one copy of every Schur module
  $S^{\rho}$ with $\rho \geq (n-k,1^k)$,
  and no others. We can therefore rewrite the previous formula as:
  \begin{equation} \label{eqn:what-we-need}
    f(\sigma) = \tr \left[ A \psi(\sigma^{-1}) \right],
  \end{equation}
  where $\psi$ is a matrix representation corresponding to
  $M^{(n-k,1^k)}$, and $A$ is a block-diagonal matrix whose blocks correspond to
  the irreducible modules $S^{\rho}$, i.e. it has $K_{\rho,(n-k,1^k)}$ blocks corresponding to $\rho$,
  all equal to
  $$
  \frac{\dim[\rho]}{K_{\rho,(n-k,1^k)}}\Hat{f}(\rho),
  $$
  where the Kostka number $K_{\rho, (n-k,1^k)}$ is the multiplicity of $S^{\rho}$ in $M^{(n-k,1^k)}$.

  Next, recall that the permutation module $M^{(n-k,1^k)}$ corresponds to the permutation action of $S_n$
  on $(n-k,1^k)$-tabloids, which can be identified
  with ordered $k$-tuples of distinct numbers between 1 and \(n\). Since trace is conjugation-invariant, we can
  perform a change of basis, replacing $A$ and $\psi$ by similar
  matrices $B$ and $\phi$, so that
   \begin{equation} \label{eqn:what-we-need-2}
    f(\sigma) = \tr \left[B \phi(\sigma^{-1})\right],
  \end{equation}
  where $\phi_{\alpha,\beta}(\tau)$ is 1 if $\tau$ takes the ordered $k$-tuple
  $\alpha$ to the ordered $k$-tuple $\beta$, and 0 otherwise.  But then
  equation (\ref{eqn:what-we-need-2}) means precisely that
$$f =
  \sum_{\alpha,\beta} B_{\alpha,\beta}T_{\alpha \mapsto \beta},$$
completing the proof.
\end{proof}

\remove{Here is an alternative proof, for the more algebraically minded
reader. If $G$ is a finite group, and $\rho \in R$ (where \(R\) is as defined in section \ref{sec:background}, let $U_{\rho}$ denote the space of complex-valued functions on \(G\) whose Fourier transform is concentrated on \(\rho\). It is well-known that \(U_{\rho}\) is the sum of all copies of \(\rho\) in the group module \(\mathbb{C}[G]\). Note that

sum of all
submodules of the group module $\mathbb{C}[G]$ which are isomorphic to
$\rho$. It is well known that $V_{\rho}$ is a two-sided ideal of
$\mathbb{R}[G]$. We have
\[V_{k} = \bigoplus_{\alpha \geq (n-k,1^{k})} V_{[\alpha]},\]
where $V_{[\alpha]}$ denotes the sum of all submodules of $\mathbb{R}[S_{n}]$
isomorphic to the Specht module $S^{\alpha}$, so $V_{k}$ is a two-sided ideal of $\mathbb{R}[S_{n}]$.
Now notice that the set of vectors
\[\left\{\sum \{\sigma: \sigma(1)=j_{1},\ldots,\sigma(k)=j_{k}\}:\ j_{1},\ldots,j_{k} \textrm{ are distinct}\right\}\]
is a basis for a copy $W$ of the permutation module $M^{(n-k,1^{k})}$
in the group module $\mathbb{R}[S_{n}]$. Let $V$ be the sum of all right translates of $W$,
i.e. the subspace of $\mathbb{R}[S_{n}]$ spanned by all $k$-cosets. We are aiming to show that $V=V_{k}$.
As observed above, we have the following decomposition:
\[M^{(n-k,1^{k})} = \bigoplus_{\lambda \leq (n-k,1^{k})} K_{\lambda,(n-k,1^{k})} S^{\lambda}\]
with $K_{\lambda,(n-k,1^{k})} \geq 1$ for all $\lambda \geq (n-k,1^{k})$. Therefore, $W \leq V_{k}$.
Since $V_{k}$ is a two-sided ideal of $\mathbb{R}[S_{n}]$, $V \leq V_{k}$. Recall that if $G$ is any finite group as above,
and $T,T'$ are two isomorphic submodules of $\mathbb{R}[G]$, then there exists $s \in \mathbb{R}[G]$
such that the right multiplication map $x \mapsto xs$ is an isomorphism from $T$ to $T'$.
Hence, the sum of all right translates of $W$ contains all submodules of $\mathbb{R}[S_{n}]$
isomorphic to $S^{\lambda}$,
for each $\lambda \geq (n-k,1^{k})$, i.e. $V_{[\lambda]} \leq V \ \forall \lambda \geq (n-k,1^{k})$.
Hence, $V_{k} \leq V$, so $V=V_{k}$ as required.
}
\section{Proof of main theorems} \label{sec:main-proofs}
\subsection{Proof strategy}
In this section we finally proceed to eat the pudding. In view of
Theorem \ref{thm:cosets-span}, we would like to prove that whenever $I$ is a maximum-sized
$k$-intersecting family, the Fourier transform of its characteristic
function is supported on the irreducible representations corresponding
to partitions $\rho \geq (n-k,1^k)$, i.e. those whose Young diagram has first row of length at least \(n-k\).  Let us call
these representations the `fat' representations, and their
transposes (those whose Young diagram has first column of height at least \(n-k\)) the `tall' representations. The rest will be called `medium' representations.

Given a $k$-intersecting family $I \subset S_n$, we will first consider it as an independent
set in the Cayley graph \(\Gamma_{k}\) on $S_n$ generated by $\fpf_k$, the
set of permutations with less than $k$ fixed points. Since \(\fpf_k\) is a union of conjugacy classes, by Theorems \ref{thm:cayley-spectrum} and \ref{thm:spechtmodules}, the eigenvalues of \(\Gamma_{k}\) are given by
\begin{equation}\lambda^{(k)}_{\rho} = \frac{1}{\dim[\rho]} \sum_{\sigma \in \fpf_k} \chi_{\rho}(\sigma)\quad (\rho \vdash n).
\end{equation}

Fixed-point-free permutations are also called {\em derangements}, and \(\Gamma_1\) is also called the {\em derangement graph}. Let \(d_{n} = |\textrm{FPF}_{1}(n)|\); it is well-known (and easy to see, using the inclusion-exclusion formula), that
\[d_{n}= \sum_{i=0}^{n} (-1)^{i}{n \choose i}(n-i)! = \sum_{i=0}^{n} (-1)^{i}\frac{n!}{i!} =  (1/e+o(1))n!.\]
For \(n \geq 5\), the eigenvalues of \(\Gamma_1\) satisfy:
\begin{eqnarray*}
\lambda^{(1)}_{(n)}& = &d_{n}\\
\lambda^{(1)}_{(n-1,1)} &=& -d_{n}/(n-1)\\
|\lambda_{\rho}^{(1)}| &<& cd_{n}/n^{2} < d_{n}/(n-1)\quad \textrm{for all other }\rho \vdash n
\end{eqnarray*}
(where \(c\) is an absolute constant). Hence, the matrix \(\Gamma_{1}\) has \[\lambda_{\textrm{\scriptsize{min}}}/\lambda_{1} = -1/(n-1).\]
As observed in \cite{godsil-2008} and \cite{renteln}, this implies via Hoffman's bound (Theorem \ref{thm:hoffman}) that any 1-intersecting family \(I \subset S_{n}\) satisfies \(|I| \leq (n-1)!\). If equality holds, then \(1_{I} \in V_{1}\), from which we may conclude (e.g. from the \(k=1\) case of Theorems \ref{thm:cosets-span} and \ref{thm:span-is-disjoint-union}) that \(I\) is a 1-coset. Also, we may conclude via Theorem \ref{thm:Xhoffman} that any 1-cross-intersecting pair of families \(I,J \subset S_{n}\) safisfy \(|I||J| \leq ((n-1)!)^{2}\). If equality holds, then \(1_{I}, 1_{J} \in V_{1}\), which enables us to conclude that \(I=J\) is a 1-coset of \(S_{n}\). This proves Leader's conjecture on 1-cross-intersecting families in \(S_{n}\) for \(n \geq 5\) (it can be verified directly for \(n=4\)).

However, for \(k\) fixed and \(n\) large, calculating the least eigenvalue of \(\Gamma_{k}\) and applying Hoffman's bound only gives an upper bound of \(\Theta((n-1)!)\) for the size of a \(k\)-intersecting subset of \(S_n\). Indeed,
\begin{eqnarray*}
\lambda^{(k)}_{(n-1,1)} & = & \frac{1}{\dim[n-1,1]}\sum_{\sigma \in \textrm{FPF}_{k}} \chi_{(n-1,1)}(\sigma)\\
& = & \frac{1}{n-1} \sum_{\sigma \in \textrm{FPF}_k} (\xi_{(n-1,1)}(\sigma)-1)\\
& = & \frac{1}{n-1}\sum_{i=0}^{k-1} {n \choose i}d_{n-i}(i-1)\\
& = & -\frac{1}{n-1}(1/e+o(1))n!\left(1-\sum_{i=1}^{k-2} \frac{i}{(i+1)!}\right).
\end{eqnarray*}
Note that
\[\sum_{i=1}^{\infty} \frac{i}{(i+1)!} = \frac{d}{dx} \frac{e^{x}-1}{x} \bigg|_{x=1} = 1,\]
so for any \(k \in \mathbb{N}\),
\[1-\sum_{i=1}^{k-2} \frac{i}{(i+1)!} >0,\]
and therefore \(\lambda^{(k)}_{(n-1,1)} = -\Theta_{k}((n-1)!)\). It turns out that \(|\lambda^{(k)}_{\rho}| \leq a_{k} n! / n^2\) for all partitions \(\rho \neq (n),(n-1,1)\), where \(a_{k} >0\) depends on \(k\) alone. Hence, if \(k\) is fixed and \(n\) is large, \(\Gamma_k\) has
\[\lambda_{\min} = \lambda^{(k)}_{(n-1,1)} = -\Theta_{k}((n-1)!).\]

Hence, applying Hoffman's bound only gives
\[|I| \leq \Theta_{k}((n-1)!),\]
for a \(k\)-intersecting \(I \subset S_n\).

Instead, we will construct a linear combination \(Y\) of subgraphs of \(\Gamma_{k}\) (each a Cayley graph generated by a conjugacy-class within \(\fpf_k\)) which has the correct eigenvalues for us to prove Theorems \ref{main-theorem} and \ref{thm:span-is-maximal} from Theorem \ref{thm:hoffman-linear}. By Theorem \ref{thm:cayley-spectrum}, if \(X_1,\ldots,X_t\) are conjugacy-classes within \(\fpf_k\), and \(\beta_1,\ldots,\beta_t \in \mathbb{R}\), then the eigenvalues of the linear combination
\[Y = \sum_{j=1}^{t} d_j \Cay(S_n,X_j)\]
are
\begin{equation}
\label{eq:linearcomb}
\lambda_{\rho} = \frac{1}{\dim[\rho]} \sum_{j=1}^{t} d_j \sum_{\sigma \in X_j} \chi_{\rho}(\sigma) = \frac{1}{\dim[\rho]} \sum_{j=1}^{t} d_j |X_j| \chi_{\rho}(\tau_j),\quad (\rho \vdash n)
\end{equation}
where \(\tau_j\) denotes any permutation in \(X_j\). So the eigenvalues of \(Y\) still correspond to partitions of \(n\), and are therefore relatively easy to handle.

The value of $\lambda_{\min}/\lambda_1$ required in Theorem \ref{thm:hoffman-linear} to produce the upper bound in Theorem \ref{main-theorem} is as follows:

\begin{cor} \label{thm:hoffman-calc}
  Define
$$\omega = \omega_{n,k} = -\frac{(n-k)!}{n!-(n-k)!} = -\frac{1}{n(n-1)\ldots(n-k+1)-1}.$$
  Assume the conditions of Theorem \ref{thm:hoffman-linear}. If $\lambda_{min} / \lambda_{1} = \omega$, and $I$ is an independent set in $G$, then $|I| \leq (n-k)!$.
\end{cor}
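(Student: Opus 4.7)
The plan is to apply the weighted Hoffman bound (Theorem \ref{thm:hoffman-linear}) directly and then simplify the resulting fraction. Since $I$ is an independent set in $G$ and we are given the conditions of Theorem \ref{thm:hoffman-linear}, we may invoke inequality (\ref{eq:weightedhoffman}) to obtain
\[
\frac{|I|}{|V|} \leq \frac{-\lambda_{\min}}{\lambda_1 - \lambda_{\min}} = \frac{-\lambda_{\min}/\lambda_1}{1 - \lambda_{\min}/\lambda_1}.
\]
Substituting $\lambda_{\min}/\lambda_1 = \omega$ reduces the problem to a purely arithmetic verification that $-\omega/(1-\omega) = (n-k)!/n!$, since $|V| = n!$.

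To carry out this verification, I would compute directly:
\[
1 - \omega = 1 + \frac{(n-k)!}{n! - (n-k)!} = \frac{n!}{n! - (n-k)!},
\]
so
\[
\frac{-\omega}{1-\omega} = \frac{(n-k)!/(n!-(n-k)!)}{n!/(n!-(n-k)!)} = \frac{(n-k)!}{n!}.
\]
Combining this with the displayed inequality above yields $|I|/n! \leq (n-k)!/n!$, i.e. $|I| \leq (n-k)!$, as required.

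There is no real obstacle here; the corollary is essentially a bookkeeping statement that records the precise value of the ratio $-\lambda_{\min}/(\lambda_1 - \lambda_{\min})$ needed to produce the bound $(n-k)!$ from Hoffman's inequality. The substantive work lies elsewhere — namely, in constructing a pseudo-adjacency matrix $A = \sum_j \beta_j A_j$ for $\Gamma_k$ whose eigenvalues actually satisfy $\lambda_{\min}/\lambda_1 = \omega$. That construction is the content of the later theorems (where the coefficients $\beta_j$ must be chosen using the representation theory developed in Section \ref{sec:rep-sn}), and not of this corollary.
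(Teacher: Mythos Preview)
Your proof is correct and takes the same approach as the paper, which simply says ``Immediate from Theorem \ref{thm:hoffman-linear}.'' You have just written out the arithmetic explicitly, which is fine; the implicit use of $|V|=n!$ is clear from context since the ambient graph is $\Gamma_k$ on $S_n$.
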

\begin{proof}
Immediate from Theorem \ref{thm:hoffman-linear}.
\end{proof}

Since rescaling the linear combination of graphs makes no difference to the above, our aim will be to construct a linear combination \(Y\) with \(\lambda_1=1\) and \(\lambda_{\min} = \omega\). Since equality has to hold in Theorem \ref{thm:hoffman-linear} when the independent set is a \(k\)-coset, we know that for any \(k\)-coset \(T\), we must have
\[1_{T} \in \Span \left( \{ v_1 \} \cup \{ v_i : \lambda_i = \omega \} \right).\]
By Theorem \ref{thm:cosets-span}, it follows that we must have \(\lambda_{\rho} = \omega\) for each fat partition \(\rho \neq (n)\).

We will in fact construct two linear combinations, $\yeven$ and $\yodd$, from Cayley graphs generated by respectively even/odd conjugacy-classes within \(\fpf_k\). We design these linear combinations so
that \(\lambda_{(n)} = 1\) and \(\lambda_{\rho} = \omega\) for all fat \(\rho \neq (n)\). Recalling for any partition \(\rho\), we have $\chi_{\rho^{t}}= \chi_{\rho} \cdot
sgn$, where $sgn$ is the sign character, we see from (\ref{eq:linearcomb}) that for any partition \(\rho\), we have:
\begin{itemize}
 \item \(\lambda_{\rho^t} = \lambda_{\rho}\) in \(\yeven\), whereas
\item \(\lambda_{\rho^t} = -\lambda_{\rho}\) in \(\yodd\).
\end{itemize}
Hence, simply taking
\[Y = \tfrac{1}{2}(\yeven+\yodd)\]
will ensure that the eigenvalues of \(Y\) corresponding to the
tall representations are all zero.

Moreover, we will show that in both $\yeven$ and $\yodd$ (and consequently in \(Y\)), the eigenvalues corresponding to the medium
representations all have absolute value \(|\lambda| \leq c_{k} |\omega|/\sqrt{n} = o(|\omega|)\), where \(c_k >0\) depends on \(k\) alone. So provided \(n\) is sufficiently large, \(\omega\) is both the minimal eigenvalue of \(Y\) and the second-largest in absolute value, and is attained only on the non-trivial fat irreducible representations. The situation is summarized in Table \ref{table:eigenvalues}; note that the \(o(|\omega|)\) function is always the same function.

\begin{table}
\label{table:eigenvalues}
\center{
\begin{tabular}{|c|c|c|c|c|c|}
  \hline
            & $(n)$ & fat, $\neq (n)$      & $(1^n)$ & tall, $\neq (1^n)$     & medium        \\ \hline
  $\yeven$  & 1       & $\omega$ &  1   & $\omega$  & $o(|\omega|)$   \\ \hline
  $\yodd$   & 1       & $\omega$ & $-1$   & $-\omega$ & $o(|\omega|)$   \\ \hline
  $Y$       & 1       & $\omega$ &  0   & 0         & $o(|\omega|)$   \\ \hline
\end{tabular}
}
\caption{Eigenvalues}
\end{table}

Applying Theorem \ref{thm:hoffman-linear} to $Y$ will not only prove that $|I| \le (n-k)!$, but also that if equality holds, then the Fourier transform of the characteristic function of \(I\) is totally supported
on the fat representations, yielding the proof of our main
theorem (pending the results of section
\ref{sec:boolean-functions}). Since $\omega$ is also the eigenvalue of second-largest absolute value, we will also be able to deduce Theorem \ref{thm:X-main}, concerning $k$-cross-intersecting families.

In order to carry out our plan, we will identify appropriate conjugacy-classes \(X_j\) to use as the generating sets for our Cayley graphs, and appropriate coefficients \(d_j\). The set of linear equations the \(d_j\)'s must satisfy will always correspond to a specific square minor of the character table of $S_n$, which turns out to be non-singular and independent of \(n\) (for \(n\) sufficiently large). The latter statement is precisely the content of Theorems \ref{thm:minor-is-invertible} and \ref{thm:minor-is-invertible-2}.

As for the medium representations, we will show that their eigenvalues
have small absolute value using the lower bound we proved on their
dimensions in Lemma \ref{thm:dimension-for-short-2}.

\subsection{Calculations of eigenvalues of Cayley graphs}
The Cayley graphs we will use to construct our linear combination will be generated by conjugacy-classes of permutations with specific cycle-types. These cycle-types will fall into two categories: those
corresponding to partitions $> (n-k,1^k)$,
and those corresponding to partitions $\pi$, where
$$(n-k-1,k+1) \geq \pi > (n-2k-1, k+1, 1^k).$$

Note that the second category is exactly
the split of the first category (see definition \ref{split}). We need to use the split partitions in order to ensure that our conjugacy-classes have the correct sign.

We will need the following bounds on the sizes of the above conjugacy-classes:

\begin{lemma} \label{lemma:cycle-measure}
  Let $X$ be a conjugacy class of $S_n$ with a cycle of
  length $n-t$, where $t < n/2$. Then
\[\frac{n!}{(n-t)t^t}\leq |X| \leq 2(n-1)!.\]
\end{lemma}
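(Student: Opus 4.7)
The plan is straightforward combinatorics using the standard formula for the size of a conjugacy class of $S_n$: if the cycle-type of $X$ has $m_i$ cycles of length $i$, then
\[
|X| = \frac{n!}{\prod_i i^{m_i}\, m_i!}.
\]
The hypothesis gives $m_{n-t}\geq 1$, and since $t<n/2$ forces $n-t>t$, two cycles of length $n-t$ cannot coexist, so $m_{n-t}=1$. I would factor out this cycle and write
\[
|X| = \frac{n!}{(n-t)\cdot\prod_{i\neq n-t} i^{m_i}\, m_i!},
\]
noting that the remaining $m_i$ satisfy $\sum_{i\neq n-t} i\, m_i = t$.

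For the upper bound, I would simply observe that each factor in $\prod_{i\neq n-t} i^{m_i} m_i!$ is at least $1$, so
\[
|X| \leq \frac{n!}{n-t} < \frac{n!}{n/2} = 2(n-1)!,
\]
using $t<n/2$.

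For the lower bound, the key observation is that $\prod_{i\neq n-t} i^{m_i} m_i!$ is exactly the order of the centralizer in $S_t$ of any permutation whose cycle-type consists of the parts $(i^{m_i})_{i\neq n-t}$ (a genuine partition of $t$). Since the corresponding $S_t$-conjugacy class has size at least one, its centralizer has order at most $t!$. Combined with the elementary bound $t!\leq t^{t}$ (trivial for $t=0$ by convention $0^0=1$, and otherwise immediate since $j\leq t$ for each $j\leq t$), this yields
\[
\prod_{i\neq n-t} i^{m_i}\, m_i! \;\leq\; t!\;\leq\; t^{t},
\]
and hence $|X|\geq n!/((n-t)t^t)$, as required.

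There is no real obstacle here; the only slightly delicate point is ruling out a second $(n-t)$-cycle (handled by $t<n/2$) and the bookkeeping convention $0^0=1$ for the case $t=0$ (where $X$ is the class of $n$-cycles and both bounds reduce to $(n-1)!$).
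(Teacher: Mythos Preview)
Your proof is correct and follows essentially the same route as the paper: both start from the centralizer formula $|X|=n!/N$ with $N=(n-t)\prod_{i\neq n-t} i^{m_i}m_i!$, get the upper bound from $N\geq n-t>n/2$, and bound $N$ from above for the lower bound. The only difference is in this last step: the paper bounds $\prod_j a_j!\cdot\prod_i c_i$ via the AM/GM inequality (obtaining $\leq l!(t/l)^l\leq t^l\leq t^t$), whereas you observe directly that $\prod_{i\neq n-t} i^{m_i}m_i!$ is the order of a centralizer in $S_t$, hence at most $t!\leq t^t$. Your argument is slightly cleaner and avoids the AM/GM step, but the two proofs are otherwise the same.
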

\begin{proof}
  Suppose the cycle-type of \(X\) is \((n-t,c_1,\ldots,c_l)\), where \(l \leq t\) and \(\sum_{i=1}^{l}c_i = t\). Define a mapping
\[F:\ S_n \to X\]
by taking a permutation \(\sigma \in S_n\), writing it in sequence-notation
\[\sigma(1),\sigma(2),\ldots,\sigma(n),\]
and then placing parentheses at the appropriate points, producing a permutation in \(X\) (written in disjoint cycle notation). This mapping is clearly surjective, and each element of \(X\) has the same number of preimages, \(N\) say. Note that the preimages of a permutation in \(X\) are obtained by rotating each cycle and then permuting the cycles of length \(j\) for each \(j\). Let \(a_j = |\{i:\ c_i = j\}|\) for each \(j\); then
\[N = (n-t)\prod_j a_j!\prod_{i=1}^{l}c_i.\]
Observe that
\[n/2 < n-t \leq N \leq (n-t)l!\prod_{i=1}^{l}c_i \leq (n-t)l!(t/l)^{l} \leq (n-t)t^l \leq (n-t)t^t,\]
using the AM/GM inequality. Hence,
\[\frac{n!}{(n-t)t^t}\leq |X| \leq 2(n-1)!,\]
as required.
\end{proof}

We now proceed to the key step of the proof.

\begin{theorem} \label{thm:linear-exist}
  If $\rho$ is a partition of $n$, let $X_{\rho}$ denote the conjugacy class of
  permutations with cycle-type $\rho$. Assume that \(n >3k+1\), and let \(q_{k}\) denote the number of partitions $\geq (n-k,1^k)$. Let $\phi_1, \ldots, \phi_{q_{k}-1}$ be the partitions of $n$
  which are $>(n-k,1^k)$ (in decreasing lexicographic order). Let $\mu_1, \ldots, \mu_{q_{k}-1}$ be
  such that for all $j$, either $\mu_j = \phi_j$ or else $\mu_j =
  \Split(\phi_j)$.  Let $G_j = \Cay(S_n,X_{\mu_{j}})$, and let $(\lambda_i^{(j)})_i$ be the eigenvalues of $G_j$,
  in decreasing lexicographic order of their representations (so
  that $\lambda_1^{(j)} = |X_{\mu_j}|$).

  Then there exist $d_1, \ldots, d_{q_{k}-1} \in \mathbb{R}$ such that:

  \begin{equation} \label{eqn:linear-combination-of-lambda}
  \sum_j d_j \lambda_i^{(j)} =
  \left\{
  \begin{array}{rl}
    1,      & \textrm{for } i = 1 \\
    \omega, & \textrm{for } 1 < i \leq q_k
  \end{array}
  \right.
  \end{equation}

  Moreover, there exists $B_k > 0$ depending only on $k$ such
  that
$$\max_j |d_j| \leq \frac{B_k}{(n-1)!}.$$
\end{theorem}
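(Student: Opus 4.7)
The plan is to recast the existence of $d_1,\ldots,d_{q_k-1}$ as a square linear system whose coefficient matrix is the invertible, $n$-independent character-table minor supplied by Theorem \ref{thm:minor-is-invertible-2}, and then to control the solution's size via Lemma \ref{lemma:cycle-measure}. Using the Cayley eigenvalue formula \eqref{cayley-eigenvalue}, I would rewrite
\[
\lambda_i^{(j)} \;=\; \frac{|X_{\mu_j}|\,\chi_{\phi_i}(\tau_j)}{\dim[\phi_i]},
\]
and change variables via $a_j := d_j\,|X_{\mu_j}|$ to absorb the class sizes. The system \eqref{eqn:linear-combination-of-lambda} then becomes
\[
\sum_{j=1}^{q_k-1} a_j\,\chi_{\phi_i}(\tau_{\mu_j}) \;=\; \kappa_i \qquad (1 \le i \le q_k),
\]
where $\kappa_1 = 1$ and $\kappa_i = \omega\dim[\phi_i]$ for $i \ge 2$ (writing $\phi_{q_k} := (n-k,1^k)$).

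Next, the top $(q_k-1) \times (q_k-1)$ block of this coefficient matrix is precisely the minor $\breve{C} = \tilde{C}$ of Theorem \ref{thm:minor-is-invertible-2}, which is invertible with entries independent of $n$. Solving this square subsystem therefore determines $a$ uniquely, and it remains to check that the $q_k$-th equation (at $\phi_{q_k} = (n-k,1^k)$) is automatically satisfied. I expect this to follow from a direct character computation exploiting the hook shape of $(n-k,1^k)$, together with the splitting invariance of the fat characters developed inside the proof of Theorem \ref{thm:minor-is-invertible-2}.

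To bound $|a_j|$ independently of $n$, note that $\tilde{C}^{-1}$ has operator norm depending only on $k$, and the right-hand side is bounded: by the hook formula, every fat partition satisfies $\dim[\phi_i] = O_k(n^k)$, while $\omega = -\Theta(n^{-k})$, so $\kappa_i = O_k(1)$ for every $i$. Hence $\max_j |a_j| \le A_k$ for a constant $A_k$ depending only on $k$.

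Converting back via $d_j = a_j / |X_{\mu_j}|$, each $\mu_j$ (being either $\phi_j$ or $\Split(\phi_j)$) has a cycle of length at least $n-2k-1$, so Lemma \ref{lemma:cycle-measure} gives
\[
|X_{\mu_j}| \;\ge\; \frac{n!}{(n-2k-1)(2k+1)^{2k+1}} \;\ge\; c_k (n-1)!
\]
for some $c_k > 0$ depending only on $k$. Combining, $\max_j |d_j| \le B_k/(n-1)!$ with $B_k := A_k/c_k$, as required. The main obstacle I anticipate is precisely the consistency check at the $q_k$-th equation: the invertible minor of Theorem \ref{thm:minor-is-invertible-2} directly handles only $q_k - 1$ of the eigenvalue constraints, so the identity $\sum_j a_j\,\chi_{(n-k,1^k)}(\tau_{\mu_j}) = \omega\dim[(n-k,1^k)]$ must be extracted from the representation-theoretic machinery of Section \ref{sec:rep-sn}, and this is the delicate combinatorial heart of the argument.
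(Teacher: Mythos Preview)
Your framework matches the paper's proof almost exactly: the paper also writes the system as $Md=b$ with $M = N_1\,\tilde{C}^{t}\,N_2$ (your change of variables $a_j = d_j|X_{\mu_j}|$ is just absorbing the diagonal $N_2$), invokes Theorem~\ref{thm:minor-is-invertible-2} to solve the $(q_k-1)\times(q_k-1)$ subsystem, and bounds the solution via the $n$-independence of $\tilde{C}^{-1}$ together with Lemma~\ref{lemma:cycle-measure}. Steps~1 and~3 of your outline are correct and are essentially the paper's Steps~1 and~3.

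The genuine gap is exactly the point you flag: the $q_k$-th equation. You propose to extract it via ``a direct character computation exploiting the hook shape of $(n-k,1^k)$'', but the paper does \emph{not} do this, and it is far from clear such a computation would close. The paper's argument is quite different and rather slick: take any $k$-coset $T$, set $f=1_T$, and feed $f$ into the quadratic form $f^{t}Af$ underlying the weighted Hoffman bound (Theorem~\ref{thm:hoffman-linear}) with the already-determined coefficients $d_j$. Since $T$ is independent in $\Gamma_k$, $f^{t}Af=0$. From the proof of Theorem~\ref{thm:cosets-span} one knows $\widehat{f}(\rho)=0$ unless $\rho$ is fat, and crucially $\widehat{f}\bigl((n-k,1^k)\bigr)\neq 0$. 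Expanding the quadratic form over the Fourier weights $W(\rho)$ gives
\[
0 \;=\; \alpha^{2} \;+\; \omega\bigl(\alpha-\alpha^{2}-W((n-k,1^k))\bigr) \;+\; \lambda\, W((n-k,1^k)),
\]
where $\alpha=(n-k)!/n!$ and $\lambda=\sum_j d_j \lambda_{q_k}^{(j)}$ is the unknown. By the definition of $\omega$ one has $\alpha^{2}+\omega(\alpha-\alpha^{2})=0$, so the equation collapses to $(\lambda-\omega)\,W((n-k,1^k))=0$, forcing $\lambda=\omega$. In short, the $q_k$-th constraint holds \emph{because} a $k$-coset is an extremal independent set for the Hoffman bound; no additional character identity is required.
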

Theorem \ref{thm:linear-exist} will enable us to get the correct eigenvalues on the fat representations, but we will also need the following, which
ensures that the eigenvalues for the medium representations are smaller
in absolute value:

\begin{theorem}\label{thm:medreps}
  Under the assumptions and notation of Theorem \ref{thm:linear-exist},
  let $\rho$ be a medium partition, and let $\lambda_{\rho} =
  \sum_{j=1}^{q_k-1} d_j \lambda_{\rho}^{(j)}$. Then
$$|\lambda_{\rho}| \leq c_k|\omega|/\sqrt{n} = o(|\omega|),$$
  where \(c_k >0\) depends only on \(k\).
\end{theorem}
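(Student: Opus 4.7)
The plan is to bound $|\lambda_\rho|$ term-by-term in the expansion
\[\lambda_\rho = \sum_{j=1}^{q_k - 1} d_j\,\lambda_\rho^{(j)} = \sum_{j=1}^{q_k - 1} \frac{d_j\, |X_{\mu_j}|\, \chi_\rho(\tau_j)}{\dim[\rho]},\]
where the second equality uses the eigenvalue formula (\ref{cayley-eigenvalue}) applied to a fixed $\tau_j \in X_{\mu_j}$. Theorem \ref{thm:linear-exist} supplies $|d_j| \leq B_k/(n-1)!$, Lemma \ref{lemma:cycle-measure} supplies $|X_{\mu_j}| \leq 2(n-1)!$, Lemma \ref{thm:dimension-for-short-2} supplies $\dim[\rho] \geq E_k n^{k+1}$ for medium $\rho$, and the number of summands $q_k-1$ is a constant depending only on $k$. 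Hence if we can show that $|\chi_\rho(\tau_j)|$ is bounded by a constant $C_k$ depending only on $k$, we will obtain $|\lambda_\rho| = O_k(1/n^{k+1})$. Since $|\omega| \sim 1/n^k$, this is $O_k(|\omega|/n)$, which is certainly $\leq c_k|\omega|/\sqrt{n}$ once $n$ is large enough in terms of $k$.

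The main---and, in fact, the only---piece of substantive work is the character bound. The key structural fact to exploit is that, whether $\mu_j = \phi_j$ or $\mu_j = \Split(\phi_j)$, the cycle-type $\mu_j$ contains a part of length at least $n - 2k - 1$: indeed, $\phi_j > (n-k,1^k)$ forces the first part of $\phi_j$ to be at least $n-k$, and splitting reduces this longest part only by $k+1$. So $\tau_j$ has a single cycle of length $m_j \geq n-2k-1$, and the remaining cycles jointly cover only $\leq 2k+1$ symbols. By the Murnaghan--Nakayama rule, $|\chi_\rho(\tau_j)|$ is at most the number of border-strip tableaux of shape $\rho$ and content $\mu_j$. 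Peeling off the border strip corresponding to the long cycle first, the complement of such a strip in $\rho$ is a sub-partition of $\rho$ of size $n-m_j \leq 2k+1$, so the number of choices of strip is at most $\sum_{t=0}^{2k+1} p(t)$, a constant in $k$; and the remaining border-strip tableau lives on a diagram with $\leq 2k+1$ cells, hence can be completed in at most $(2k+1)!$ ways. This yields $|\chi_\rho(\tau_j)| \leq C_k$ with $C_k$ depending only on $k$, independent of $\rho$ and $n$.

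Assembling the four bounds gives
\[|\lambda_\rho| \leq \frac{2(q_k - 1)\, B_k\, C_k}{E_k\, n^{k+1}},\]
which is $o(|\omega|)$ and in particular at most $c_k|\omega|/\sqrt{n}$ for a suitable $c_k$ depending only on $k$ and all $n$ sufficiently large. The character bound is the sole obstacle of substance, and what makes it go through is precisely the design feature that every $\mu_j$ has one essentially-full cycle, reducing the Murnaghan--Nakayama sum to a combinatorial count on a diagram whose size is bounded in terms of $k$ alone; the dimension bound from Lemma \ref{thm:dimension-for-short-2}, which supplies the large denominator $n^{k+1}$, then finishes the argument.
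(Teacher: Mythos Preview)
Your argument is correct, but it differs from the paper's. The paper never bounds the individual character values $|\chi_\rho(\tau_j)|$; instead it bounds each eigenvalue $|\lambda_\rho^{(j)}|$ directly via Cauchy--Schwarz applied to $\langle \chi_\rho, 1_{X_{\mu_j}}\rangle$ (their Lemma~\ref{thm:cauchy-schwarz}), obtaining $|\lambda_\rho^{(j)}| \le \sqrt{n!\,|X_{\mu_j}|}/\dim[\rho]$. Combined with $|X_{\mu_j}| \le 2(n-1)!$ and $\dim[\rho] \ge E_k n^{k+1}$, this gives $|\lambda_\rho^{(j)}| = O_k\big((n-1)!\,n^{-k-1/2}\big)$, and then the bound on $|d_j|$ yields exactly $|\lambda_\rho| \le c_k|\omega|/\sqrt{n}$. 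Your route via the Murnaghan--Nakayama rule exploits the long cycle in each $\mu_j$ to get the sharper pointwise estimate $|\chi_\rho(\tau_j)| \le C_k$, and hence the stronger conclusion $|\lambda_\rho| = O_k(|\omega|/n)$; this makes the role of the long cycle in the construction more transparent. The trade-off is that the paper's Cauchy--Schwarz argument is entirely self-contained within the background already developed, whereas yours imports the Murnaghan--Nakayama rule, which the paper does not state or use.
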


We defer the proof of this until later, and turn first to the proof of
Theorem \ref{thm:linear-exist}.

\begin{proof}[Proof of Theorem \ref{thm:linear-exist}:] There are three steps in the proof. Step 1 is showing the existence of coefficients $d_j$ such that
equation (\ref{eqn:linear-combination-of-lambda}) holds for $i < q_{k}$. Step 2 is showing that with these
coefficients, the equation also holds for $i=q_k$. Step 3 is obtaining the bound on $\max_{j}|d_j|$.

{\bf Step 1 ($i < q_{k}$):} Consider the system of linear equations $Md = b$, where $d = (d_1,
  \ldots, d_{q_{k}-1})$, $b = (1, \omega, \ldots, \omega)$, and
  $M_{ji} = \lambda_i^{(j)}$. By Theorem \ref{thm:cayley-spectrum},
  \begin{equation}
    \lambda_i^{(j)} = \frac{1}{\dim[\phi_i]} \sum_{\tau \in X_j} \chi_{\phi_i}(\tau) = \frac{|X_j|}{\dim[\phi_i]} \chi_{\phi_i}(\tau_j),
  \end{equation}
where \(\tau_j\) is a representative for the conjugacy-class \(X_j\) (recall that characters are class-functions). Equivalently,
  \[ M = N_1 \breve{C}^t N_2,\]
  where $\breve{C}$ denotes the minor of the character table of \(S_n\) with rows $\phi_1, \ldots, \phi_{q_{k}-1}$ and columns $\mu_1, \ldots, \mu_{q_{k}-1}$, and $N_1$ and $N_2$ are the
  diagonal row- and column- normalization matrices respectively; explicitly,
\begin{equation}
\label{eq:diagonal}
(N_1)_{i,j} = \frac{\delta_{i,j}}{\dim[\phi_i]},\quad (N_2)_{i,j} = \delta_{i,j}|X_j|.
\end{equation}
 Recall from Theorem \ref{thm:minor-is-invertible-2} that \(\breve{C}\) is always equal to the top-left minor \(\tilde{C}\), and that $\tilde{C}$ (and therefore \(\tilde{C}^t\)) is invertible. Therefore, there is a (unique) solution for $d$, so we can find
  appropriate values for $d_1, \ldots, d_{q_{k}-1}$.

{\bf Step 2 ($i = q_{k}$):}  Write
  $$\lambda= \sum_j d_j \lambda_{q_{k}}^{(j)}.$$
 We must show that $\lambda= \omega.$
  This will follow from analyzing the proof of the generalized Hoffman bound, Theorem \ref{thm:hoffman-linear}, when \(G=\Gamma_k\) and the independent set \(I\) is a \(k\)-coset.

Let $T$ be a $k$-coset of $S_n$, and let $f = 1_T$ be its characteristic function. Choose the orthonormal basis of eigenvectors \(\{v_i\}\) in Theorem \ref{thm:hoffman-linear} to be the normalization of the orthogonal basis formed by the entries of (matrix-equivalents of) the irreducible representations \([\rho]\) (see section \ref{subsec:cayleygraphs}). Observe that
\[\alpha = \mathbb{E}f = \frac{(n-k)!}{n!}.\]
By the argument in the proof of Theorem \ref{thm:cosets-span}, we see that
\begin{equation}
    \widehat{f}(\rho) \neq 0 \Rightarrow \rho \geq (n-k,1^k),
  \end{equation}
 and moreover, $\widehat{f}((n-k,1^k)) \neq 0$.

 Let $W(\rho) = Tr[\Hat{f}(\rho)(\Hat{f}(\rho))^t]$. This is simply the $L^2$-weight of the coefficients of eigenvectors \(v_i\) which correspond to entries of (the matrix-equivalent of) \([\rho]\), i.e. if we write as usual $f = \sum a_i v_i$,
 then $W(\rho) = \sum a_i^2$, where the sum is over all
 $i$ such that $v_i$ is (the normalization of) one of the $(\dim \rho)^2$ eigenvectors which are entries of $[\rho]$. Hence,
\[W(\rho) \neq 0 \Rightarrow \rho \geq (n-k,1^k),\]
and moreover, \(W((n-k,1^k)) \neq 0\).

  Using the coefficients $d_1, \ldots, d_{q_k-1}$
  in Theorem \ref{thm:hoffman-linear}, we have $\lambda_1 = 1$
  and $\lambda_t = \omega$ for $1 < t < q_k$. Since \(T\) is an independent set in the graph \(G=\Gamma_k\), we have
  \begin{eqnarray*}
   0 & = & \sum_{\rho \geq (n-k,1^k)} \lambda_{\rho} W(\rho)\\
& = & \alpha^2 +
    \omega(\alpha - \alpha^2 - W((n-k,1^k)) )+
    \lambda W((n-k,1^k)).
  \end{eqnarray*}
By definition of \(\omega\), we have $\omega(\alpha - \alpha^2) = - \alpha^2$; combining this with the above yields $\lambda= \omega$, as required.

{\bf Step 3 (bounding the coefficients $ d_j$) :}  Recall that $M = N_1 \tilde{C}^t N_2$, and by Theorem
  \ref{thm:minor-is-invertible-2}, \(\tilde{C}\) is invertible and independent of \(n\). Hence, the entries of \((\tilde{C}^t)^{-1}\) are uniformly bounded by some function of \(k\) alone. Since $Md =
  b$, we have 
\begin{equation}
\label{eq:d}
 d = M^{-1} b = (N_2)^{-1} (\tilde{C}^{t})^{-1} (N_1^{-1}b).
\end{equation}
We now proceed to bound uniformly the entries of the vector $N_1^{-1} b$. We have
$$(N_1^{-1})_{ij} = \delta_{ij} \dim[\phi_i],$$
and therefore
$$(N_1^{-1} b)_i =
  \dim[\phi_i] b_i.$$
For \(i=1\), we have
$$(N_1^{-1} b)_1 =
  \dim[\phi_1] b_1 = 1 \cdot 1 = 1.$$
For $i>1$, we have
\[\dim[\phi_i] + 1 \leq \dim(M^{(n-k,1^k)}) = n(n-1)\ldots(n-k+1),\]
since both \(\phi_i\) and the trivial representation are constituents of \(M^{(n-k,1^k)}\). Hence,
\[|(N_1^{-1} b)_i| = |\dim[\phi_i]||\omega| \leq 1.\]

We now bound uniformly the entries of the matrix $(N_2)^{-1}$. Using Lemma \ref{lemma:cycle-measure}, we have
  \[ (N_{2}^{-1})_{ij} = \frac{\delta_{ij}}{|X_j|} \leq \frac{(n-2k-1)(2k-1)^{2k-1}}{n!} \leq \frac{b_{k}}{(n-1)!},\]
where \(b_k >0\) depends only on \(k\).

Combining these bounds with (\ref{eq:d}), we see that there exists \(B_k>0\) depending only on \(k\) such that 
\[|d_j| \leq \frac{B_k}{(n-1)!}\ \forall j < q_k,\]
completing the proof.
\end{proof}

Next, we need two more lemmas to assist in the proof of Theorem
\ref{thm:medreps}.

\begin{lemma} \label{thm:cauchy-schwarz}
  Let \(G\) be a finite group, let \(X \subset G\) be inverse-closed and conjugation-invariant, and let $\Cay(G,X)$ be the Cayley graph on \(G\) with generating set $X$. Let $\rho$
  be an irreducible representation of $G$ with dimension $d$, and let $\lambda_{\rho}$ be
  the corresponding eigenvalue of \(\Cay(G,X)\), as in Theorem
  \ref{thm:cayley-spectrum}. Then
  $$
    |\lambda_{\rho}| \leq \frac{\sqrt{|G||X|}}{d}.
  $$
\end{lemma}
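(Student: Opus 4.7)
The plan is to combine the character formula for $\lambda_\rho$ provided by Theorem \ref{thm:cayley-spectrum} with a single application of the Cauchy--Schwarz inequality, using the inner product $\langle\phi,\psi\rangle = \tfrac{1}{|G|}\sum_{g\in G}\phi(g)\overline{\psi(g)}$ on the group algebra.

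Concretely, I would start by recalling the identity
\[
\lambda_\rho \;=\; \frac{|G|\,\langle \chi_\rho, 1_X\rangle}{d}
\]
from Theorem \ref{thm:cayley-spectrum}. By Cauchy--Schwarz on the above inner product,
\[
|\langle \chi_\rho, 1_X\rangle| \;\leq\; \|\chi_\rho\|_2 \,\|1_X\|_2.
\]
Since $\rho$ is irreducible, $\|\chi_\rho\|_2 = 1$ by the orthonormality of irreducible characters (the ``Fact'' quoted just before Schur's Lemma in the paper). A direct computation gives
\[
\|1_X\|_2 \;=\; \sqrt{\frac{1}{|G|}\sum_{g\in G}1_X(g)^2} \;=\; \sqrt{\frac{|X|}{|G|}}.
\]
Plugging both estimates into the formula for $\lambda_\rho$ yields
\[
|\lambda_\rho| \;\leq\; \frac{|G|}{d}\cdot \sqrt{\frac{|X|}{|G|}} \;=\; \frac{\sqrt{|G|\,|X|}}{d},
\]
which is the desired bound.

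There is no real obstacle here; the only thing to be careful about is normalization, since the paper's inner product includes the factor $\tfrac{1}{|G|}$, and a naive Cauchy--Schwarz in the counting inner product would give a bound off by a factor of $|G|$. Using the inner product under which the irreducible characters are orthonormal and the squared norm of $1_X$ is $|X|/|G|$ makes the two factors of $|G|$ cancel correctly.
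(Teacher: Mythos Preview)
Your proof is correct and essentially identical to the paper's: both invoke the eigenvalue formula $\lambda_\rho = |G|\langle \chi_\rho, 1_X\rangle/d$ from Theorem~\ref{thm:cayley-spectrum}, apply Cauchy--Schwarz with $\|\chi_\rho\|_2=1$ and $\|1_X\|_2=\sqrt{|X|/|G|}$, and simplify. Your remark about the normalization is apt but not an additional step --- the paper uses the same inner product throughout.
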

\begin{proof}
  Since the irreducible characters of \(G\) are orthonormal, we have
  \[
  \langle \chi_{\rho},\chi_{\rho} \rangle = \frac{1}{|G|} \sum_{g \in G} |\chi_{\rho}(g)|^2 = 1.
  \]
  By the Cauchy-Schwarz inequality,
  \[|\langle \chi_{\rho}, 1_X \rangle| \leq \sqrt{ \langle \chi_\rho,\chi_{\rho} \rangle\langle 1_X,1_X\rangle } = \sqrt{|X|/|G|}.\]
  Substituting into Equation \ref{cayley-eigenvalue}, we obtain
  \[
  |\lambda_{\rho}| = \frac{|G| |\langle \chi_{\rho},1_X \rangle|}{d} \leq \frac{\sqrt{|G||X|}}{d},
  \]
  as required.
\end{proof}

Combining this with Lemma \ref{lemma:cycle-measure} yields:
\begin{lemma} \label{thm:high-dimension-eigenvalue}
  Let \(X\) be a conjugacy-class of \(S_n\), with cycle-type having a cycle of length at least \(n-2k-1\). Let $\Cay(S_n,X)$ be the Cayley graph on \(S_n\) generated by \(X\). For any \(C_k >0\), there exists \(D_k >0\), depending only on \(k\), such that if $\rho$ is an irreducible representation of dimension at least $C_k
  n^{k+1}$, then the corresponding eigenvalue \(\lambda_\rho\) of $\Cay(S_n,X)$ satisfies
  \[|\lambda_{\rho}| < D_{k}\frac{(n-1)! |\omega_{n,k}|}{\sqrt{n}}.\]
\end{lemma}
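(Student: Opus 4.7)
My plan is to string together the two ingredients already on the table: the Cauchy--Schwarz bound on eigenvalues of Cayley graphs (Lemma~\ref{thm:cauchy-schwarz}) and the size bound for conjugacy classes containing a long cycle (Lemma~\ref{lemma:cycle-measure}). The dimension lower bound $\dim[\rho]\geq C_k n^{k+1}$ enters in the denominator of Lemma~\ref{thm:cauchy-schwarz}, and the comparison to $|\omega_{n,k}|$ is a straightforward estimate using $|\omega_{n,k}|\geq 1/n^k$.

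First I would apply Lemma~\ref{thm:cauchy-schwarz} with $G = S_n$, obtaining
\[
|\lambda_{\rho}| \leq \frac{\sqrt{n!\,|X|}}{\dim[\rho]}.
\]
Next, since by hypothesis $X$ has a cycle of length at least $n-2k-1$, I set $t \leq 2k+1$ in Lemma~\ref{lemma:cycle-measure}; for $n$ sufficiently large depending on $k$ we have $t < n/2$, so the lemma yields $|X| \leq 2(n-1)!$. Using the identity $\sqrt{n!\,(n-1)!} = \sqrt{n}\,(n-1)!$, this gives
\[
|\lambda_{\rho}| \leq \frac{\sqrt{2n}\,(n-1)!}{C_k n^{k+1}} = \frac{\sqrt{2}\,(n-1)!}{C_k\, n^{k+1/2}}.
\]

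Finally, I would compare this with the target $(n-1)!\,|\omega_{n,k}|/\sqrt{n}$. Since $n(n-1)\cdots(n-k+1)-1 < n^k$, we have $|\omega_{n,k}| > 1/n^k$, and therefore
\[
\frac{(n-1)!\,|\omega_{n,k}|}{\sqrt{n}} > \frac{(n-1)!}{n^{k+1/2}}.
\]
Combining this with the previous displayed inequality yields
\[
|\lambda_{\rho}| \leq \frac{\sqrt{2}}{C_k}\cdot\frac{(n-1)!}{n^{k+1/2}} < \frac{\sqrt{2}}{C_k}\cdot\frac{(n-1)!\,|\omega_{n,k}|}{\sqrt{n}},
\]
so we may take $D_k = \sqrt{2}/C_k$ (or any slightly larger constant to absorb the small-$n$ cases where Lemma~\ref{lemma:cycle-measure} needs $n$ large enough that $2k+1 < n/2$).

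There is essentially no obstacle here; the lemma is a bookkeeping step combining two results already proved. The only subtlety is verifying that $t = 2k+1 < n/2$, which holds automatically once $n$ is large compared with $k$, as is assumed throughout.
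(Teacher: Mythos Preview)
Your proof is correct and follows essentially the same approach as the paper: apply Lemma~\ref{thm:cauchy-schwarz}, bound $|X|$ via Lemma~\ref{lemma:cycle-measure}, and compare with $|\omega_{n,k}|$ using $|\omega_{n,k}|>1/n^k$. You even make the constant $D_k=\sqrt{2}/C_k$ explicit and handle the small-$n$ caveat (the paper absorbs it the same way, by enlarging $D_k$ so that one may assume $n>4k+2$).
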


\begin{proof}
  Note that
  $|\omega_{n,k}| = \Theta(1/n^k)$. By choosing \(D_k\) large enough, we may assume that \(n>4k+2\), so \(n-2k-1 > n/2\), and therefore the hypotheses of Lemma \ref{lemma:cycle-measure} hold. Assume that $\dim \rho \geq C_k n^{k+1}$. Then

  \begin{eqnarray*}
    |\lambda_{\rho}|
    & \leq &
    \frac{\sqrt{|G||X|}}{C_k n^{k+1}} \\
    & \leq &
    \frac{\sqrt{2n!(n-1)!}}{C_k n^{k+1}} \\
    & = &
    \frac{\sqrt{2}(n-1)!}{C_k n^{k+1/2}} \\
    & \leq &
    D_{k}\frac{(n-1)! |\omega_{n,k}|}{\sqrt{n}},
  \end{eqnarray*}
as required.
\end{proof}

We can now prove Theorem \ref{thm:medreps}.

\begin{proof}[Proof of Theorem \ref{thm:medreps}:]
  Assume the hypotheses of Theorem \ref{thm:medreps}. By Lemma \ref{thm:dimension-for-short-2}, we have \(\dim[\rho] \geq E_{k} n^{k+1}\). Hence, we see that
    \begin{eqnarray*}
|\lambda_{\rho}| & =& \left|\sum_{j=1}^{q_{k}-1} d_j\lambda_{\rho}^{(j)}\right|\\
& \leq & (q_k-1) \max_j |d_j| \max_j |\lambda_{\rho}^{(j)}|\\
& \leq & (q_k-1) \frac{B_k}{(n-1)!} D_k \frac{(n-1)! |\omega_{n,k}|}{\sqrt{n}}\\
& = & c_{k}|\omega|/\sqrt{n},
    \end{eqnarray*}
  as required.
\end{proof}

\subsection{The linear combination of Cayley graphs}
Finally, we can now construct the linear combination of Cayley graphs we will use to
prove our main theorem.

\begin{theorem} \label{thm:yeven-is-ok}
  There exists a linear combination \(\yeven\) of Cayley graphs generated by conjugacy-classes of even permutations within \(\fpf_k\), such that its eigenvalues
  are as described in the first line of Table 1, i.e.,
  \begin{itemize}
 \item $\lambda_{(n)} = 1$,
\item $\lambda_{\rho} = \omega$
  for each fat $\rho \neq (n)$,
\item $\lambda_{(1^n)} = 1$,
\item $\lambda_{\rho} = \omega$ for each tall $\rho \neq (1^n)$, and
\item $|\lambda_{\rho}| \leq c_{k} |\omega| / \sqrt{n} \leq o(|\omega|)$ for each medium $\rho$, where \(c_k>0\) depends only on \(k\).
\end{itemize}
\end{theorem}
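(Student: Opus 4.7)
The plan is to invoke Theorem \ref{thm:linear-exist} after choosing the partitions $\mu_j$ so that every generating conjugacy-class consists of even permutations. As observed after Definition \ref{split}, splitting a long cycle increases the number of cycles by one and therefore flips parity, so exactly one of $\phi_j$ and $\Split(\phi_j)$ is even. For each $j \in \{1,\ldots,q_k-1\}$ I would take $\mu_j$ to be whichever of the two is even, and define
$$\yeven = \sum_{j=1}^{q_k-1} d_j\,\Cay(S_n, X_{\mu_j}),$$
with $d_j$ the coefficients produced by Theorem \ref{thm:linear-exist}. A quick sanity check is required: every such $\mu_j$ has first part at least $n-2k-1$, so permutations in $X_{\mu_j}$ have at most $k-1$ fixed points and hence $X_{\mu_j} \subset \fpf_k$, as required.

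With this choice, four of the five bullet points fall out almost for free. The eigenvalues $\lambda_{(n)} = 1$ and $\lambda_\rho = \omega$ on the non-trivial fat representations are given directly by Theorem \ref{thm:linear-exist}. For the tall representations I would apply Theorem \ref{thm:transpose-sign}: picking any representative $\tau_j \in X_{\mu_j}$, we have
$$\chi_{\rho^t}(\tau_j) = \chi_\rho(\tau_j)\sgn(\tau_j) = \chi_\rho(\tau_j),$$
since $\tau_j$ is even. Combined with $\dim[\rho^t] = \dim[\rho]$, this yields $\lambda_{\rho^t}^{(j)} = \lambda_\rho^{(j)}$ for every $\rho$ and every $j$. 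Summing over $j$ then gives $\lambda_{(1^n)} = \lambda_{(n)} = 1$ and $\lambda_{\rho^t} = \lambda_\rho = \omega$ whenever $\rho$ is fat and $\rho \neq (n)$, covering the sign representation and the remaining tall representations. Finally, Theorem \ref{thm:medreps} delivers the desired bound $|\lambda_\rho| \leq c_k|\omega|/\sqrt{n}$ for each medium $\rho$; the class of medium partitions is closed under transposition (being neither fat nor tall is a symmetric condition), so no separate argument is needed for transposes of medium partitions.

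I anticipate no real obstacle: the design trick is just the parity-aware choice of the $\mu_j$, which simultaneously keeps the generating set inside $\fpf_k$ and makes $\sgn$ act trivially on it, so eigenvalues on tall representations automatically equal those on the corresponding fat ones. The heavy lifting was already packaged into Theorems \ref{thm:linear-exist} and \ref{thm:medreps}. The only thing one must verify in passing is that the standing hypothesis $n > 3k+1$ suffices for $\Split$ to produce a legitimate partition of $n$ and for the parity computation to go through without degeneracies, both of which are immediate from the definitions.
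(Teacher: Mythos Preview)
Your proposal is correct and follows essentially the same approach as the paper: choose $\mu_j$ to be the even one of $\phi_j,\ \Split(\phi_j)$, invoke Theorem~\ref{thm:linear-exist} for the fat eigenvalues, use $\chi_{\rho^t}=\chi_\rho\cdot\sgn$ together with $\dim[\rho^t]=\dim[\rho]$ to transfer these to the tall ones, and apply Theorem~\ref{thm:medreps} for the medium ones. Your added sanity checks (that $X_{\mu_j}\subset\fpf_k$ and that the medium class is transpose-closed) are correct and even slightly more explicit than the paper's write-up.
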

\begin{proof}
  Recall that for each partition \(\phi_j > (n-k,1^k)\), exactly one of \(\phi_j\) and \(\Split(\phi_j)\) is even. For each partition \(\phi_j > (n-k,1^k)\), define
\[\mu_j =
  \left\{
  \begin{array}{rl}
    \phi_j,  & \textrm{if }X_{\phi_j} \textrm{ is an even conjugacy class}; \\
    \textrm{Split}(\phi_j), & \textrm{if }X_{\phi_j} \textrm{ is an odd conjugacy class}.
  \end{array}
  \right.\]
Then each \(X_{\mu_j}\) consists of even permutations. Take
\[Y_{\textrm{even}} = \sum_{j=1}^{q_k-1} d_j \Cay(S_n,X_{\mu_j}),\]
where the \(d_j\)'s are as defined in Theorem \ref{thm:linear-exist}. Then we have \(\lambda_{(n)} = 1\), and for each fat \(\rho \neq (n)\), we have \(\lambda_{\rho} = \omega\). By Theorem \ref{thm:transpose-sign}, for any partition \(\rho\), we have \(\chi_{\rho^{t}} = \chi_{\rho} \cdot \sgn\). So by equation (\ref{eq:linearcomb}), for any partition \(\rho\), we have \(\lambda_{\rho^t} = \lambda_{\rho}\), since our Cayley graphs are all generated by conjugacy-classes of even permutations. Therefore, \(\lambda_{(1^n)} = 1\), and \(\lambda_{\rho} = \omega\) for each tall \(\rho \neq (1^n)\). By Theorem \ref{thm:medreps}, \(|\lambda_\rho| \leq c_{k} |\omega| / \sqrt{n} \leq o(|\omega|)\) for each medium \(\rho\), completing the proof.
\end{proof}
\begin{theorem} \label{thm:yodd-is-ok}
  There exists a linear combination \(\yodd\) of Cayley graphs generated by conjugacy-classes of odd permutations within \(\fpf_k\), such that its eigenvalues satisfy
  \begin{itemize}
 \item $\lambda_{(n)} = 1$,
\item $\lambda_{\rho} = \omega$
  for all fat $\rho \neq (n)$,
\item $\lambda_{(1^n)} = -1$,
\item $\lambda_{\rho} = -\omega$ for each tall $\rho \neq (1^n)$, and
\item $|\lambda_{\rho}| \leq c_{k} |\omega| / \sqrt{n} \leq o(|\omega|)$ for each medium $\rho$, where \(c_k>0\) depends only on \(k\).
\end{itemize}
\end{theorem}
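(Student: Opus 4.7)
The proof will mirror that of Theorem \ref{thm:yeven-is-ok}, swapping parities. Recall that by Definition \ref{split}, $\Split(\mu)$ has exactly one more cycle than $\mu$, so $\mu$ and $\Split(\mu)$ always have opposite parity; in particular, for each partition $\phi_j > (n-k,1^k)$, exactly one of $\phi_j$ and $\Split(\phi_j)$ is odd. For each $j = 1, \ldots, q_k-1$, define
\[
\mu_j = \begin{cases} \phi_j, & \text{if } X_{\phi_j} \text{ is an odd conjugacy class}; \\ \Split(\phi_j), & \text{if } X_{\phi_j} \text{ is an even conjugacy class}. \end{cases}
\]
By construction, each $X_{\mu_j}$ consists of odd permutations. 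Let $d_1, \ldots, d_{q_k-1}$ be the coefficients supplied by Theorem \ref{thm:linear-exist} for this choice of $(\mu_j)_j$, and set
\[
\yodd \;=\; \sum_{j=1}^{q_k-1} d_j \Cay(S_n, X_{\mu_j}).
\]

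For the fat representations, the conclusion of Theorem \ref{thm:linear-exist} gives directly that $\lambda_{(n)} = 1$ and $\lambda_\rho = \omega$ for every fat $\rho \neq (n)$, together with the uniform bound $\max_j |d_j| \le B_k/(n-1)!$ needed below.

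For the tall representations, I use Theorem \ref{thm:transpose-sign}: for any partition $\rho$ we have $\chi_{\rho^t} = \chi_\rho \cdot \sgn$ and $\dim[\rho^t] = \dim[\rho]$. Since every representative $\tau_j \in X_{\mu_j}$ satisfies $\sgn(\tau_j) = -1$, formula (\ref{eq:linearcomb}) yields
\[
\lambda_{\rho^t} \;=\; \frac{1}{\dim[\rho]} \sum_{j=1}^{q_k-1} d_j |X_{\mu_j}| \chi_\rho(\tau_j)\,\sgn(\tau_j) \;=\; -\lambda_\rho,
\]
so in particular $\lambda_{(1^n)} = -1$ and $\lambda_{\rho^t} = -\omega$ for every fat $\rho \neq (n)$.

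For the medium representations, I invoke Theorem \ref{thm:medreps} verbatim. Its statement applies to any choice of $(\mu_j)_j$ allowed by Theorem \ref{thm:linear-exist} and its proof uses only the uniform bound on $\max_j |d_j|$ together with Lemma \ref{thm:high-dimension-eigenvalue}; the hypothesis of the latter (a cycle of length at least $n - 2k - 1$) is satisfied because $\mu_j$ is either $\phi_j$ (with first part $\ge n-k$) or $\Split(\phi_j)$ (with largest part $\ge n-2k-1$). This yields $|\lambda_\rho| \le c_k|\omega|/\sqrt{n} = o(|\omega|)$ for each medium $\rho$, completing the proof. No step here is a genuine obstacle: the whole argument is a parity-swapped parallel of Theorem \ref{thm:yeven-is-ok}, with the only subtlety being the verification that the sign-twist in Theorem \ref{thm:transpose-sign} combines correctly with (\ref{eq:linearcomb}) to flip, rather than preserve, the eigenvalues on transposed partitions.
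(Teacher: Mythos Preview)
Your proof is correct and follows essentially the same approach as the paper's own proof: you make the identical choice of $\mu_j$, invoke Theorem \ref{thm:linear-exist} and Theorem \ref{thm:medreps} in the same way, and use $\chi_{\rho^t}=\chi_\rho\cdot\sgn$ together with (\ref{eq:linearcomb}) to obtain $\lambda_{\rho^t}=-\lambda_\rho$. The only difference is that you spell out a few details (the parity flip from Definition \ref{split}, the explicit computation of $\lambda_{\rho^t}$, and the cycle-length check for Lemma \ref{thm:high-dimension-eigenvalue}) that the paper leaves implicit.
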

\begin{proof}
  For each partition \(\phi_j > (n-k,1^k)\), define
\[\mu_j =
  \left\{
  \begin{array}{rl}
    \phi_j,  & \textrm{if }X_{\phi_j} \textrm{ is an odd conjugacy class}; \\
    \textrm{Split}(\phi_j), & \textrm{if }X_{\phi_j} \textrm{ is an even conjugacy class}.
  \end{array}
  \right.\]
Then each \(X_{\mu_j}\) consists of odd permutations. Take
\[\yodd = \sum_{j=1}^{q_k-1} d_j \Cay(S_n,X_{\mu_j}),\]
where the \(d_j\)'s are as defined in Theorem \ref{thm:linear-exist}. Then we have \(\lambda_{(n)} = 1\), and for each fat \(\rho \neq (n)\), we have \(\lambda_{\rho} = \omega\). This time, our Cayley graphs are all generated by odd permutations, so we have \(\lambda_{\rho^t} = -\lambda_{\rho}\) for every partition \(\rho\). Hence, \(\lambda_{(1^n)} = -1\), and \(\lambda_{\rho} = -\omega\) for each tall \(\rho \neq (1^n)\). Again, by Theorem \ref{thm:medreps}, \(|\lambda_\rho| \leq c_{k} |\omega| / \sqrt{n} \leq o(|\omega|)\) for each medium \(\rho\), completing the proof.
\end{proof}
\begin{theorem} \label{thm:y-is-ok}
  There exists a linear combination \(Y\) of Cayley graphs generated by conjugacy-classes within \(\fpf_k\), such that its eigenvalues satisfy:
\begin{itemize}
 \item $\lambda_{(n)} = 1$,
\item $\lambda_{\rho} = \omega$
  for each fat $\rho \neq (n)$,
\item $\lambda_{\rho} = 0$ for each tall $\rho$, and
\item $|\lambda_{\rho}| \leq c_{k} |\omega| / \sqrt{n} \leq o(|\omega|)$ for each medium $\rho$, where \(c_k>0\) depends only on \(k\).
\end{itemize}
\end{theorem}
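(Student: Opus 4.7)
The plan is simply to set $Y = \tfrac{1}{2}(\yeven + \yodd)$, using the linear combinations constructed in Theorems~\ref{thm:yeven-is-ok} and \ref{thm:yodd-is-ok}. Since both $\yeven$ and $\yodd$ are already real linear combinations of Cayley graphs on $S_n$ generated by conjugacy-classes contained in $\fpf_k$, their half-sum is again such a linear combination. The whole point of separating even and odd generators in those two theorems was precisely to arrange that the tall eigenvalues cancel when averaging.

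The verification proceeds case-by-case, using the fact (via Theorem~\ref{thm:cayley-spectrum}) that all adjacency matrices of Cayley graphs on $S_n$ generated by unions of conjugacy-classes share a common orthonormal eigenbasis indexed by the entries of matrix representations, so eigenvalues of linear combinations are just the corresponding linear combinations of eigenvalues. First I would check the trivial representation $(n)$: both summands give $1$, so the average is $1$. Next, for a fat $\rho \neq (n)$: both summands give $\omega$, so the average is $\omega$. For $\rho = (1^n)$: $\yeven$ gives $1$ while $\yodd$ gives $-1$, so the average is $0$; for a tall $\rho \neq (1^n)$: $\yeven$ gives $\omega$ while $\yodd$ gives $-\omega$, so the average is $0$. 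Finally, for a medium $\rho$, the triangle inequality gives
\[
|\lambda_\rho| \leq \tfrac{1}{2}\bigl(|\lambda_\rho^{\text{even}}| + |\lambda_\rho^{\text{odd}}|\bigr) \leq c_k |\omega|/\sqrt{n},
\]
with the same constant $c_k$ (depending only on $k$) as in the previous two theorems.

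There is no real obstacle here: all the hard work, namely producing the coefficients $d_j$ via the invertibility of the minor $\tilde{C}$ (Theorem~\ref{thm:minor-is-invertible-2}), the derivation $\lambda = \omega$ on the $(n-k,1^k)$ row by analyzing a $k$-coset as an equality case in the weighted Hoffman bound, the uniform bound on $|d_j|$, and the control of medium eigenvalues via the dimension lower bound of Lemma~\ref{thm:dimension-for-short-2} combined with the Cauchy--Schwarz bound on characters, is already packaged inside Theorems~\ref{thm:linear-exist}, \ref{thm:medreps}, \ref{thm:yeven-is-ok}, and \ref{thm:yodd-is-ok}. The only thing to be careful about is to invoke Theorem~\ref{thm:transpose-sign} implicitly through the parity of the generators: because every generator used in $\yeven$ is even and every generator used in $\yodd$ is odd, the sign character forces $\lambda_{\rho^t} = +\lambda_\rho$ in the even case and $\lambda_{\rho^t} = -\lambda_\rho$ in the odd case, so the cancellation on tall representations is automatic rather than requiring any fresh computation.
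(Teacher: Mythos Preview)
Your proposal is correct and is exactly the paper's approach: the paper also sets \(Y = \tfrac{1}{2}(\yeven + \yodd)\) and relies on Theorems~\ref{thm:yeven-is-ok} and \ref{thm:yodd-is-ok} for all the substance. Your case-by-case verification is precisely the content implicit in Table~1.
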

\begin{proof}
Set \(Y = \tfrac{1}{2}(Y_{\textrm{even}}+Y_{\textrm{odd}})\).
\end{proof}

\begin{proof}[Proof of Theorems \ref{main-theorem} and \ref{thm:span-is-maximal}:] If \(n\) is sufficiently large depending on \(k\), then \(c_{k}|\omega|/\sqrt{n} < |\omega|\), so \(\omega\) is both the minimum eigenvalue of \(Y\) and the second-largest in absolute value. By our choice of \(\omega\), applying the generalized Hoffman Theorem \ref{thm:hoffman-linear} to \(Y\), we see that if \(I \subset S_n\) is \(k\)-intersecting, then \(|I| \leq (n-k)!\), and that if equality holds, then the characteristic function \(1_{I} \in V_k\). Similarly, applying Theorem \ref{thm:Xhoffman} to \(Y\), we see that if \(I,J \subset S_n\) are \(k\)-cross-intersecting, then $|I||J| \leq ((n-k)!)^{2}$, and that if equality holds, then \(1_I,1_J \in V_k\).
\end{proof}

\section{Boolean functions} \label{sec:boolean-functions}
Recall that $V_k$ is the linear space of real-valued functions on \(S_n\) whose Fourier
transform is supported only on irreducible representations $\geq (n-k,1^k)$. Theorem \ref{thm:cosets-span} states that \(V_k\) is spanned by the
$k$-cosets of \(S_n\). We now wish to characterize the Boolean functions in \(V_k\), proving (a strengthened version of) Theorem \ref{thm:span-is-disjoint-union}.

We will show that every {\em non-negative} function in $V_k$ can be written
as a linear combination of the characteristic functions of $k$-cosets {\em with non-negative coefficients},
and every $0/1$ valued function in \(V_k\) can be written with $0/1$ coefficients.

Let $\A_k$ be the set of ordered $k$-tuples of distinct numbers between
$1$ and $n$, and let \((n)_k = |\mathcal{A}_k| = \frac{n!}{(n-k)!} = n(n-1)\ldots(n-k+1)\). We will prove the following
\begin{theorem} \label{thm:linear-combination-is-union}
Let $f \in V_k$ be non-negative. Then there exist non-negative coefficients
$(b_{\alpha,\beta})_{\alpha,\beta \in \A_k}$ such that $f = \sum b_{\alpha,\beta}1_{T_{\alpha \mapsto \beta}}.$
Furthermore, if $f$ is Boolean, then $f$ is the characteristic function of a disjoint union of $k$-cosets.
\end{theorem}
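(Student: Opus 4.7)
I will establish the non-negative representation first, and then deduce the Boolean characterization as a short consequence.

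\textbf{Part 1 (non-negative representation).} By Theorem~\ref{thm:cosets-span}, every $f \in V_k$ admits some real representation $f = \sum_{\alpha,\beta} b_{\alpha,\beta}\, 1_{T_{\alpha \mapsto \beta}}$. The task is to show that when $f \geq 0$ the coefficients can be chosen non-negative; equivalently, that $f$ lies in the convex cone $\mathcal{C}$ generated by the $k$-coset indicators. I will argue by Farkas' lemma. If $f \notin \mathcal{C}$, there is a separating $h : S_n \to \mathbb{R}$ with $\langle h, 1_{T_{\alpha \mapsto \beta}} \rangle \geq 0$ for every $k$-coset but $\langle h, f \rangle < 0$. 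Since $f$ and all coset indicators lie in $V_k$, replacing $h$ by its orthogonal projection onto $V_k$ preserves every constraint as well as $\langle h, f \rangle$; so I may assume $h \in V_k$.

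The crux is then the following \emph{generalized Birkhoff lemma}: if $h \in V_k$ satisfies $\sum_{\sigma \in T_{\alpha \mapsto \beta}} h(\sigma) \geq 0$ for every $(\alpha,\beta) \in \A_k \times \A_k$, then $h \geq 0$ pointwise on $S_n$. Granting this, $\langle h, f \rangle = \frac{1}{n!}\sum_\sigma h(\sigma) f(\sigma) \geq 0$ whenever $f \geq 0$, contradicting the choice of $h$ and completing Part 1. To prove the lemma I would proceed by induction on $k$: for $k=1$, representing $h \in V_1$ as $h(\sigma) = \sum_i B[i,\sigma(i)]$ and dualizing against the classical Birkhoff theorem on doubly stochastic matrices settles it, since the hypothesis on coset sums corresponds exactly to the row/column-sum constraints. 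For general $k$ one needs an analogue handling ``doubly stochastic tensors'' on $\A_k \times \A_k$, or else a reduction through $V_{k-1} \subset V_k$ that isolates the new irreducible components $S^{\rho}$ with $\rho = (n-k,1^k)$ and runs a separate argument at the top layer.

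\textbf{Part 2 (Boolean characterization).} Given Part 1 the Boolean case is a short peeling argument. Suppose $f = 1_A$ is Boolean and lies in $V_k$; by Part 1, write $f = \sum b_{\alpha,\beta}\, 1_{T_{\alpha \mapsto \beta}}$ with every $b_{\alpha,\beta} \geq 0$. If $A$ is non-empty, some $b_{\alpha^*, \beta^*} > 0$, and then for each $\sigma \in T_{\alpha^* \mapsto \beta^*}$ the non-negativity of the remaining coefficients forces $f(\sigma) \geq b_{\alpha^*,\beta^*} > 0$, hence $f(\sigma) = 1$; in particular $T_{\alpha^* \mapsto \beta^*} \subseteq A$. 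Therefore $f - 1_{T_{\alpha^* \mapsto \beta^*}}$ is again Boolean, still lies in $V_k$, and has strictly smaller support. Induction on $|A|$ now writes $A$ as a disjoint union of $k$-cosets.

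\textbf{Main obstacle.} I expect the main obstacle to be the generalized Birkhoff lemma in Part 1: once it is available, both the non-negative decomposition and the Boolean characterization follow readily. This lemma is presumably exactly the ``generalization of Birkhoff's theorem'' the authors allude to in the introduction, and its proof is the genuinely non-trivial ingredient here.
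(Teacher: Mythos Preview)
Your Farkas/separation set-up and the peeling argument in Part 2 are both fine and match the paper. The genuine gap is your ``generalized Birkhoff lemma'': as stated, it is \emph{false}, already for $k=1$. Take $n=3$ and define $h$ by
\[
h(e)=-1,\quad h((12))=h((13))=h((23))=1,\quad h((123))=h((132))=2.
\]
Then $\sum_\sigma h(\sigma)\,\sgn(\sigma)=0$, so $h\in V_1$, and one checks that $\sum_{\sigma\in T_{i\mapsto j}}h(\sigma)\geq 0$ for all nine $1$-cosets; yet $h(e)<0$. In particular, your claim that ``the hypothesis on coset sums corresponds exactly to the row/column-sum constraints'' on the representing matrix $B$ is incorrect: if $h(\sigma)=\sum_i B[i,\sigma(i)]$ then $\sum_{\sigma\in T_{i\mapsto j}}h(\sigma)=(n-1)!\,B[i,j]+(n-2)!\sum_{l\neq i}\sum_{m\neq j}B[l,m]$, which is not a row- or column-sum. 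Abstractly, your lemma is equivalent (by cone duality) to the inclusion $\mathrm{proj}_{V_k}(\mathbb{R}_{\geq 0}^{S_n})\subseteq\mathcal{C}$, which is strictly stronger than the theorem's assertion $V_k\cap\mathbb{R}_{\geq 0}^{S_n}\subseteq\mathcal{C}$, and the stronger inclusion simply fails.

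The paper's argument is set up differently: it does not try to separate in $V_k$, but rather runs LP duality in the space of $(n)_k\times(n)_k$ \emph{representing matrices} $M$ for $f$, using the freedom to add constants along ``$k$-lines'' (each $k$-line corresponding to a partition of $S_n$ into $k$-cosets). The dual variable is then a non-negative $(n)_k\times(n)_k$ matrix $C$ whose sum along every $k$-line equals $1$, i.e.\ a \emph{$k$-bistochastic} matrix. The generalized Birkhoff theorem they actually prove (Theorem~\ref{thm:genbirkhoff}) says any such $C$ is a convex combination of the matrices $P_\sigma^{(k)}$ induced by permutations $\sigma\in S_n$, whence $\sum_{\alpha,\beta}C_{\alpha,\beta}M_{\alpha,\beta}$ is a convex combination of values of $f$ and hence non-negative. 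So the key Birkhoff-type statement lives in matrix space, not in $V_k$; your function-space version does not survive.
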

For didactic reasons, we begin by dealing with the case $k=1$.

\begin{theorem} \label{thm:linear-second-proof-1}
  If $f \in V_1$ is nonnegative, then there exist $b_{i,j} \geq 0$
  such that $f = \sum_{i,j} b_{i,j} 1_{T_{i \mapsto j}}$.
\end{theorem}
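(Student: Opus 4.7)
The plan is to combine the representation provided by Theorem \ref{thm:cosets-span} with Birkhoff's theorem on doubly stochastic matrices, via linear programming duality. By Theorem \ref{thm:cosets-span} applied with $k=1$, we may write $f = \sum_{i,j} b_{ij} 1_{T_{i \mapsto j}}$ for some real matrix $B = (b_{ij})$, and evaluating at $\sigma \in S_n$ yields the formula $f(\sigma) = \sum_{i} b_{i,\sigma(i)}$. This representation is not unique: since $\sum_{j} 1_{T_{i \mapsto j}} = 1_{S_n}$ for each $i$, and $\sum_{i} 1_{T_{i \mapsto j}} = 1_{S_n}$ for each $j$, the matrix $B$ may be replaced by the matrix with entries $b_{ij} + u_i + v_j$ for any $u, v \in \mathbb{R}^{n}$ with $\sum_i u_i + \sum_j v_j = 0$, without changing $f$. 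The theorem thus reduces to showing the feasibility of the system
\[
b_{ij} + u_i + v_j \geq 0 \ \ \forall i,j \in [n], \qquad \sum_{i} u_i + \sum_{j} v_j = 0,
\]
in the unknowns $(u,v) \in \mathbb{R}^{2n}$.

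I would establish feasibility via Farkas' lemma for systems with mixed equality and inequality constraints. Infeasibility would produce a non-trivial pair $(y, \lambda)$, where $y = (y_{ij}) \geq 0$ entrywise and $\lambda \in \mathbb{R}$, such that every row sum and every column sum of $y$ equals $\lambda$, and $\sum_{ij} y_{ij} b_{ij} < 0$. A case analysis on the sign of $\lambda$ then derives a contradiction. The cases $\lambda < 0$ and $\lambda = 0$ are immediate: the former is incompatible with $y \geq 0$ having non-negative row sums, while the latter forces $y \equiv 0$, contradicting $\sum y_{ij} b_{ij} < 0$. In the remaining case $\lambda > 0$, the matrix $y/\lambda$ is doubly stochastic, so by Birkhoff's theorem it can be written as a convex combination $\sum_{\sigma \in S_n} \alpha_\sigma P_\sigma$ of permutation matrices, and then
\[
\sum_{ij} y_{ij} b_{ij} = \lambda \sum_{\sigma} \alpha_\sigma \sum_i b_{i,\sigma(i)} = \lambda \sum_{\sigma} \alpha_\sigma f(\sigma) \geq 0,
\]
using $f \geq 0$ together with $\lambda, \alpha_\sigma \geq 0$; this contradicts $\sum y_{ij} b_{ij} < 0$, establishing feasibility.

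The conceptual point I would emphasize is that the space of linear relations among the indicators $1_{T_{i \mapsto j}}$ coincides exactly with the row-and-column shifts of $B$, which is precisely what makes Birkhoff's theorem the natural tool for dualizing here. The hard part, in some sense, is hidden in the invocation of Birkhoff's theorem itself; once one has it, the argument is essentially a one-line LP-duality calculation. I expect the main obstacle in generalizing this argument to $k \geq 2$ (Theorem \ref{thm:linear-combination-is-union}) will be the need for a suitable higher-order analogue of Birkhoff's theorem, which presumably corresponds to the generalization mentioned in the introduction.
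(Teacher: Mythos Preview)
Your proposal is correct and follows essentially the same route as the paper: represent $f$ by a real matrix, observe that row/column shifts summing to zero leave $f$ unchanged, and use LP duality together with Birkhoff's theorem to find a non-negative representative. The only cosmetic difference is that the paper normalizes the dual certificate to have row and column sums equal to $1$ directly (absorbing your parameter $\lambda$), whereas you carry the scalar $\lambda$ and dispose of the degenerate sign cases explicitly; your closing remark about needing a generalized Birkhoff theorem for $k\geq 2$ is exactly what the paper does next.
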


\begin{proof}
We say a real \(n \times n\) matrix \(A = (a_{i,j})_{i,j \in [n]}\) \textit{represents} a function \(f \in V_1\) if
\[f = \sum_{i,j}a_{i,j} 1_{T_{i \mapsto j}},\]
or equivalently,
\[f(\sigma) = \sum_{i=1}^{n} a_{i,\sigma(i)}\quad\forall \sigma \in S_{n}.\]
Let \(A\) be a matrix representing \(f\). Our task is to find a {\em non-negative} matrix \(B\) which also represents \(f\). Let \(x_1,\ldots,x_n\) and \(y_1,\ldots,y_n\) be real numbers such that
\[\sum_{i=1}^{n}x_i + \sum_{j=1}^{n} y_j= 0.\]
Let \(B = (b_{i,j})_{i,j \in [n]}\) be the matrix produced from \(A\) by adding \(x_i\) to row \(i\) for each \(i\), and then adding \(y_j\) to column \(j\) for each \(j\), i.e.
\[b_{i,j} = a_{i,j}+x_i+y_j\quad (i,j \in [n]).\]
Note that \(B\) also represents \(f\). We wish to show that there exists a choice of \(x_i\)'s and \(y_j\)'s such that the matrix \(B\) has all its entries non-negative, i.e. we wish to solve the system of inequalities
\begin{equation}\label{eq:1cosetinequalities}
x_{i}+y_{j} \geq -a_{i,j}\ (1 \leq i,j \leq n)\quad \textrm{subject to}\quad \sum_{i}x_{i} + \sum_{j}y_{j}=0.
\end{equation}
By the strong duality theorem of linear programming (see \cite{matousek}), this is unsolvable if and only if there exist \(c_{i,j} \geq 0\) such that
\[\sum_{j}c_{i,j} = 1\ (1 \leq i \leq n),\quad \sum_{i}c_{i,j} = 1 \ (1 \leq j \leq n),\quad \sum_{i,j}c_{i,j}a_{i,j} < 0.\]
Suppose for a contradiction that this holds. The matrix \(C = (c_{i,j})_{i,j \in [n]}\) is bistochastic, and therefore by Birkhoff's theorem (see \cite{birkhoff}), it can be written as a convex combination of permutation matrices,
\[C = \sum_{t=1}^{r} s_{t} P_{\sigma_{t}},\]
where \(s_t \geq 0\ (1 \leq t \leq r)\), \(\sum_{t=1}^{r}s_t = 1\), \(\sigma_1,\ldots,\sigma_t \in S_n\), and \(P_{\sigma}\) denotes the permutation matrix of \(\sigma\), i.e. \((P_{\sigma})_{i,j} = 1_{\{\sigma(i)=j\}}\). But then \(\sum_{i,j}c_{i,j}a_{i,j}\) is a convex combination of values of \(f\):
\begin{equation}
 \label{eq:birkhoffsum}
\sum_{i,j} c_{i,j}a_{i,j} = \sum_{t=1}^{r}s_{t}\sum_{i,j} (P_{\sigma_{t}})_{i,j} a_{i,j} = \sum_{t=1}^{r}s_{t} \sum_{i=1}^{n}a_{i,\sigma_{t}(i)} = \sum_{t=1}^{r} s_{t} f(\sigma_{t})
\end{equation}
and is therefore non-negative, a contradiction. Hence, the system (\ref{eq:1cosetinequalities}) is solvable, proving the theorem.
\end{proof}

The following corollary is immediate.
\begin{cor}
If $f \in V_1$ is Boolean, then it is the characteristic function of a disjoint union of 1-cosets.
\end{cor}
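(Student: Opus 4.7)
The plan is to invoke Theorem~\ref{thm:linear-second-proof-1} twice: once for $f$ itself, and once for $1-f$, which also lies in $V_1$ (since constants do) and is non-negative. This yields two non-negative decompositions
\[f = \sum_{i,j} b_{i,j} 1_{T_{i \mapsto j}}, \qquad 1 - f = \sum_{i,j} c_{i,j} 1_{T_{i \mapsto j}}.\]
Since $f$ is $\{0,1\}$-valued and the $b_{i,j}$ are non-negative, any $\sigma \notin A := \mathrm{supp}(f)$ satisfies $\sum_i b_{i,\sigma(i)} = 0$---a sum of non-negative terms---so each summand vanishes. Setting $S = \{(i,j) : b_{i,j} > 0\}$ and $S' = \{(i,j) : c_{i,j} > 0\}$, this gives $A = \bigcup_{(i,j) \in S} T_{i \mapsto j}$ and $A^c = \bigcup_{(i,j) \in S'} T_{i \mapsto j}$; in particular, $T_{i_1 \mapsto j_1}$ and $T_{i_2 \mapsto j_2}$ must be disjoint whenever $(i_1, j_1) \in S$ and $(i_2, j_2) \in S'$, which forces $i_1 = i_2$ or $j_1 = j_2$.

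The aim is to show that $S$ lies in a single row $\{i_0\} \times [n]$ or a single column $[n] \times \{j_0\}$: in either case the constituent 1-cosets are pairwise disjoint, and $A$ is visibly a disjoint union of 1-cosets. Suppose otherwise, so $S$ contains $(i_1, j_1)$ and $(i_2, j_2)$ with $i_1 \neq i_2$ and $j_1 \neq j_2$. Then the disjointness constraint, imposed by both of these against an arbitrary $(i',j') \in S'$, forces $(i',j') \in \{(i_1, j_2),\, (i_2, j_1)\}$; in particular $|S'| \leq 2$. If $S' = \emptyset$ then $A = S_n = \bigsqcup_{j} T_{1 \mapsto j}$; if $|S'| = 1$, say $S' = \{(i_1, j_2)\}$, then $A^c = T_{i_1 \mapsto j_2}$ and hence $A = \bigsqcup_{j \neq j_2} T_{i_1 \mapsto j}$; either way $A$ is directly a disjoint union of 1-cosets.

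The crux is to rule out $|S'| = 2$. Here, applying the disjointness constraint in the reverse direction pins $S = \{(i_1, j_1), (i_2, j_2)\}$, whereupon inclusion--exclusion gives $|A| = |A^c| = 2(n-1)! - (n-2)!$ and so
\[|A| + |A^c| = 4(n-1)! - 2(n-2)! = n! - (n-2)(n-3)(n-2)!,\]
which is strictly less than $n!$ for every $n \geq 4$---contradicting $S_n = A \cup A^c$. The residual low-$n$ cases can be handled directly; for $n = 3$, the condition $1_A \in V_1$ is equivalent to $\langle 1_A, \mathrm{sgn} \rangle = 0$, and a short enumeration confirms that every subset of $S_3$ with equal numbers of even and odd permutations is a disjoint union of 1-cosets. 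The main obstacle is this pincer $|S'| = 2$ subcase, where the two decompositions (of $f$ and of $1-f$) must be exploited simultaneously to extract a sharp quantitative contradiction from a purely combinatorial disjointness constraint.
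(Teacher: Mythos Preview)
Your argument is correct, but it takes a considerably more circuitous route than the paper's. The paper proceeds by a one-line induction on the number of non-zero values of $f$: having written $f = \sum_{i,j} b_{i,j} 1_{T_{i\mapsto j}}$ with $b_{i,j} \geq 0$, pick any $(i,j)$ with $b_{i,j} > 0$; then $f \equiv 1$ on $T_{i\mapsto j}$, so $g := f - 1_{T_{i\mapsto j}}$ is again a non-negative Boolean function in $V_1$ with strictly fewer non-zero values, and the induction hypothesis finishes the job. This uses Theorem~\ref{thm:linear-second-proof-1} only once and never appeals to the complementary function $1-f$.

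Your approach, by contrast, exploits the symmetry between $f$ and $1-f$ to obtain two covers $A = \bigcup_{(i,j)\in S} T_{i\mapsto j}$ and $A^{c} = \bigcup_{(i,j)\in S'} T_{i\mapsto j}$, and then plays them off against one another via the disjointness constraint. This is a genuinely different idea, and it yields slightly more structural information along the way (for $n\geq 4$ you effectively show that the support $S$ of any non-negative representing matrix can be taken to lie in a single row or column). The cost is the case analysis on $|S'|$, the inclusion--exclusion counting in the $|S'|=2$ subcase, and the need to treat $n\leq 3$ separately by hand. The paper's inductive peeling-off argument avoids all of this and works uniformly for every $n$; it also generalises immediately to $k>1$ (see the proof of Theorem~\ref{thm:linear-combination-is-union}), whereas your pincer argument would become substantially harder to run in that setting.
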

\begin{proof}
By induction on the number of non-zero values of \(f\). Let \(f \in V_1\) be a Boolean function, and suppose the statement is true for all Boolean functions with fewer non-zero values. By Theorem \ref{thm:linear-second-proof-1}, there exist \(b_{i,j} \geq 0\) such that
\[f= \sum_{i,j}b_{i,j} 1_{T_{i \mapsto j}}.\]
Choose \(i,j\) such that \(b_{i,j} > 0\). Then \(f >0\) on \(T_{i \mapsto j}\), so \(f = 1\) on \(T_{i,j}\). Hence, \(f-1_{T_{i \mapsto j}}\) is also Boolean, with fewer non-zero values than \(f\), and so by the induction hypothesis, it is the characteristic function of a disjoint union of \(1\)-cosets. Hence, the same is true of \(f\), as required. 
\end{proof}

We now extend this proof to the case $k>1$, proving Theorem
\ref{thm:linear-combination-is-union}. However, some preliminaries are
necessary.

Let \(A\) be an \((n)_{k} \times (n)_{k}\) matrix with rows and columns indexed by \(\mathcal{A}_{k}\). Choose an ordering of the \(k\) coordinates of the \(k\)-tuples, or equivalently a permutation \(\pi \in S_{k}\), and consider the natural lexicographic ordering on \(\mathcal{A}_{k}\) induced by this ordering, i.e. \(\alpha < \beta\) iff \(\alpha_{\pi(m)} < \beta_{\pi(m)}\), where \(m = \min\{l:\alpha_{\pi(l)} \neq \beta_{\pi(l)}\}\). This lexicographic ordering on \(\mathcal{A}_{k}\) recursively partitions \(A\) into blocks: first it partitions \(A\) into \(n^{2}\) \((n-1)_{k-1} \times (n-1)_{k-1}\) blocks \(B_{i,j}\) according to the \(\pi(1)\)-coordinate of each \(k\)-tuple; then it partitions each block \(B_{i,j}\) into \((n-1)^{2}\) \((n-2)_{k-2} \times (n-2)_{k-2}\) sub-blocks according to the \(\pi(2)\)-coordinate of each \(k\)-tuple, and so on.

The following two recursive definitions concerning such matrices will be crucial.
\begin{definition}
We define a {\em \(k\)-line} in an \((n)_k \times (n)_k\) matrix \(A\) as follows. If \(k=1\), i.e. \(A\) is an \(n \times n\) matrix, a 1-line in \(A\) is just a row or column of \(A\). If \(k > 1\), and \(A\) is an \((n)_k \times (n)_k\) matrix, a \(k\)-line in \(A\) is given by choosing an ordering \(\pi\) of the \(k\)-coordinates, partitioning \(A\) into \(n^2\) blocks according to the \(\pi(1)\)-coordinate of each \(k\)-tuple as above, choosing a row or column of \((n-1)_{k-1} \times (n-1)_{k-1}\) blocks \(B_{i,j}\), and then taking a union of \((k-1)\)-lines, one from each block.
\end{definition}

\begin{definition}
We say that an \((n)_k \times (n)_k\) matrix $A$ is {\em \(k\)-bistochastic} if the following holds. If \(k=1\), an \(n \times n\) matrix \(A\) is {\em 1-bistochastic} if it is bistochastic in the usual sense, i.e. all its entries are non-negative, and all its row and column sums are 1. If $k>1$,
an $(n)_{k} \times (n)_k$ matrix \(A\) is {\em $k$-bistochastic} if,
for any partition into $n^2$ blocks $B_{i,j}$ of size $(n-1)_{k-1}
\times (n-1)_{k-1}$ according to a lexicographic order on \(\mathcal{A}_k\)
induced by any of the $k!$ orderings of the coordinates, there exists
a bistochastic \(n \times n\) matrix $R = (r_{i,j})$ and $n^2$
$(k-1)$-bistochastic matrices $M_{i,j}$ of order $(n-1)$, such that
$B_{i,j} = r_{i,j} M_{i,j}$.
\end{definition}

The following two self-evident observations indicate the relevance of these two notions.
\begin{Observation}\label{obs:linesum}
An $(n)_k \times (n)_k$ matrix with non-negative entries is $k$-bistochastic
if and only if the sum of the entries of every $k$-line in it
(coming from any of the $k!$ recursive partitions into blocks) is 1.
\end{Observation}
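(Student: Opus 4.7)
The plan is to prove both directions by induction on $k$, with base case $k=1$ being immediate (the condition reduces to the usual row/column-sum characterisation of bistochastic matrices).

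For the forward direction, suppose $A$ is $k$-bistochastic and let $L$ be a $k$-line. Then $L$ comes from choosing some coordinate $c \in [k]$ to sit first, taking the corresponding partition of $A$ into $n^2$ blocks $B_{i,j}$ of size $(n-1)_{k-1} \times (n-1)_{k-1}$, selecting a row or column of these blocks (say row $i_0$), and taking a $(k-1)$-line $L_j$ from each block $B_{i_0,j}$. By hypothesis, there exist a bistochastic matrix $R=(r_{i,j})$ and $(k-1)$-bistochastic matrices $M_{i,j}$ with $B_{i,j} = r_{i,j} M_{i,j}$. By the induction hypothesis, every $(k-1)$-line in $M_{i_0,j}$ sums to $1$, so $L_j$ sums to $r_{i_0,j}$, and therefore the total sum of $L$ equals $\sum_j r_{i_0,j} = 1$ since $R$ is bistochastic.

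For the reverse direction, assume every $k$-line of $A$ (under every ordering) sums to $1$. Fix a coordinate $c \in [k]$ and form the corresponding block partition $B_{i,j}$. The key step is to show that, within each fixed block $B_{i_0,j_0}$, every $(k-1)$-line has the same sum, independently of the sub-ordering of the remaining $k-1$ coordinates. This follows by a swapping argument: given two $(k-1)$-lines $L^{(1)}$ and $L^{(2)}$ in $B_{i_0,j_0}$ (coming from possibly different sub-orderings), extend both to $k$-lines of $A$ by picking any common $(k-1)$-lines $L_j$ in the blocks $B_{i_0,j}$ for $j \neq j_0$; both $k$-lines sum to $1$, so $L^{(1)}$ and $L^{(2)}$ have equal sum. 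Call this common value $r_{i_0,j_0}$. Taking a $k$-line along row $i_0$ or column $j_0$ of blocks gives $\sum_j r_{i_0,j} = 1 = \sum_i r_{i,j_0}$, so $R = (r_{i,j})$ is bistochastic. Define $M_{i,j} = B_{i,j}/r_{i,j}$ when $r_{i,j} > 0$, and set $M_{i,j} = \frac{1}{(n-1)_{k-1}} J$ (the rescaled all-ones matrix, easily checked to be $(k-1)$-bistochastic by induction) when $r_{i,j}=0$ (in which case $B_{i,j} = 0$ by non-negativity). Every $(k-1)$-line of $M_{i,j}$ then sums to $1$, so by induction $M_{i,j}$ is $(k-1)$-bistochastic, and we obtain the required decomposition $B_{i,j} = r_{i,j} M_{i,j}$. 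Since the choice of $c$ was arbitrary, $A$ is $k$-bistochastic.

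The only step that is not entirely mechanical is the sub-ordering independence of $r_{i,j}$ in the reverse direction; this is what allows us to read off a single scalar $r_{i,j}$ from each block despite the many ways to partition it further. The swapping trick handles this cleanly because the definition of a $k$-line allows the $(k-1)$-lines chosen in different blocks of a row/column to come from \emph{different} sub-orderings, so we are free to vary only one block at a time.
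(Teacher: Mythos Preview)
Your proof is correct. The paper itself declares this observation ``self-evident'' and offers no argument, so you have supplied a full proof where the authors give none; the inductive unpacking of the recursive definitions, together with the swapping trick to pin down a single scalar $r_{i,j}$ per block, is exactly the natural way to see it.

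One small remark: your last paragraph correctly identifies the only non-mechanical point, namely that $r_{i,j}$ is independent of the sub-ordering used inside $B_{i,j}$. Your swapping argument handles this under the reading of the definition in which the $(k-1)$-lines in the various blocks along a row or column are chosen independently (each with its own ordering). That is the natural reading of ``taking a union of $(k-1)$-lines, one from each block'', and it is the one implicitly used throughout the paper (in particular in the proof of Theorem~\ref{thm:linear-combination-is-union}), so you are on solid ground.
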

\begin{Observation}\label{obs:kcosetpartition}
Given a \(k\)-line \(\ell\), the corresponding \(k\)-cosets \(\{T_{\alpha \mapsto \beta}:\ (\alpha,\beta) \in \ell\}\)
partition \(S_{n}\).
\end{Observation}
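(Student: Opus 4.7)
The plan is to prove the observation by induction on \(k\), mirroring the recursive definitions of \(k\)-line and \(k\)-coset. The base case \(k=1\) is immediate: a row of an \(n\times n\) matrix indexed by \([n]\) corresponds to fixing some \(i\in[n]\) and letting \(j\) range over \([n]\), and the sets \(\{T_{i\mapsto j}:\ j\in[n]\}\) partition \(S_n\) since every permutation sends \(i\) to exactly one element of \([n]\); the column case is symmetric.

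For the inductive step, fix \(k\geq 2\) and assume the statement for \(k-1\). Let \(\ell\) be a \(k\)-line, determined by an ordering \(\pi\in S_k\) of the coordinates, together with, say, a row of blocks \(\{B_{i,j}:\ j\in[n]\}\) for some fixed \(i\in[n]\), and a choice of a \((k-1)\)-line \(\ell_j\) inside each \(B_{i,j}\). The rows of \(B_{i,j}\) are indexed by \(k\)-tuples \(\alpha\in\A_k\) with \(\alpha_{\pi(1)}=i\), whose other \(k-1\) coordinates form a \((k-1)\)-tuple \(\alpha'\) of distinct elements of \([n]\setminus\{i\}\); similarly the columns correspond to \(\beta\) with \(\beta_{\pi(1)}=j\) and remainder \(\beta'\) of distinct elements of \([n]\setminus\{j\}\). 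Thus \(\ell_j\) is canonically a \((k-1)\)-line in the matrix of size \((n-1)_{k-1}\times(n-1)_{k-1}\) indexed by ordered \((k-1)\)-tuples from \([n]\setminus\{i\}\) and \([n]\setminus\{j\}\) respectively.

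By the inductive hypothesis applied to this smaller symmetric group, the \((k-1)\)-cosets \(\{T_{\alpha'\mapsto\beta'}:\ (\alpha',\beta')\in\ell_j\}\) partition \(\mathrm{Sym}([n]\setminus\{i\}\to[n]\setminus\{j\})\). Conjoining the condition \(i\mapsto j\), these lift to a partition of \(\{\sigma\in S_n:\ \sigma(i)=j\}\) by \(k\)-cosets of the form \(T_{\alpha\mapsto\beta}\) with \(\alpha_{\pi(1)}=i\) and \(\beta_{\pi(1)}=j\). Since the sets \(\{\sigma\in S_n:\ \sigma(i)=j\}\) are themselves a partition of \(S_n\) as \(j\) ranges over \([n]\), taking the union over \(j\) shows that the \(k\)-cosets coming from \(\ell=\bigcup_j\ell_j\) partition all of \(S_n\). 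The case where one instead picks a column of blocks is entirely symmetric, with the roles of \(\alpha\) and \(\beta\) interchanged.

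The only step requiring any care is the translation between the recursive block structure and the coset structure, i.e.\ verifying that fixing the \(\pi(1)\)-coordinates to \(i\) on the row side and \(j\) on the column side is precisely what the first level of block decomposition encodes; once this dictionary is made explicit, everything follows from the inductive hypothesis and the trivial identity \(S_n=\bigsqcup_{j\in[n]}\{\sigma:\sigma(i)=j\}\). No obstacles of substance are expected, consistent with the observation being labelled as self-evident.
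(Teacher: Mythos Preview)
The paper does not prove this observation; it simply labels it self-evident. Your inductive argument is correct and is precisely the natural unwinding of the recursive definition of a $k$-line, so it is entirely in the spirit of what the authors had in mind.

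One small point worth making explicit: when you invoke the inductive hypothesis on ``$\mathrm{Sym}([n]\setminus\{i\}\to[n]\setminus\{j\})$'', this is not literally a symmetric group when $i\neq j$ but rather the set of bijections between two $(n-1)$-element sets. Either phrase the induction from the outset for bijections between two equinumerous sets (the definitions of $k$-line and $k$-coset carry over verbatim), or fix once and for all a bijection $[n]\setminus\{j\}\to[n]\setminus\{i\}$ to transport everything back to $S_{n-1}$. This is a purely cosmetic adjustment and does not affect the argument.
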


Any permutation $\sigma \in S_n$ induces a permutation of \(\mathcal{A}_{k}\); we write \(P_{\sigma}^{(k)}\) for the corresponding $(n)_k \times (n)_k$ permutation matrix, i.e.
\[(P_{\sigma}^{(k)})_{\alpha,\beta} = 1_{\{\sigma(\alpha)=\beta\}}.\]
These $n!$ permutations are only a small
fraction of all $((n)_k)!$ permutations of \(\mathcal{A}_k\). It is not hard to show that any \(P_{\sigma}^{(k)}\) is $k$-bistochastic. It
is a bit harder to show that any other permutation matrix is not
$k$-bistochastic (for this it is important to note that we demand
taking into account lexicographic orderings induced by all possible
orderings of the coordinates.)

The authors thank Siavosh Benabbas for suggesting the correct
formulation of the following theorem and for help in proving it.

\begin{theorem} (Generalized Birkhoff theorem)[Benabbas, Friedgut, Pilpel]\\
\label{thm:genbirkhoff}
An $(n)_k \times (n)_k$ matrix $M$ is $k$-bistochastic if and only if it is
a convex combination of \((n)_{k} \times (n)_{k}\) matrices of permutations of \(\mathcal{A}_{k}\) which are induced by permutations of $[n]$.
\end{theorem}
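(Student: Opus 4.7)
The plan is an induction on $k$ in both directions, centred on a peeling argument.

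The forward direction (``convex combinations are $k$-bistochastic'') is routine. One verifies by induction on $k$ that each $P^{(k)}_{\sigma}$ is $k$-bistochastic: under any of the $k!$ orderings $\pi$ of the coordinates, the top-level block decomposition of $P^{(k)}_{\sigma}$ has $R = P_{\sigma}$ (classically bistochastic) and each non-zero block equals $P^{(k-1)}_{\sigma|_{[n]\setminus\{i\}}}$, which is $(k{-}1)$-bistochastic by the inductive hypothesis. Since the defining conditions of $k$-bistochasticity (non-negativity and all $k$-line sums equal to $1$) are linear, $k$-bistochasticity is preserved under convex combinations.

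For the converse, my plan is a peeling argument. Given a non-zero $k$-bistochastic $M$, I aim to produce a single $\sigma \in S_n$ such that $M_{\alpha,\sigma(\alpha)} > 0$ for every $\alpha \in \A_k$, where $\sigma(\alpha) := (\sigma(\alpha_1),\ldots,\sigma(\alpha_k))$. Setting $\epsilon := \min_\alpha M_{\alpha,\sigma(\alpha)} > 0$, one checks that $M - \epsilon P^{(k)}_\sigma$ is non-negative with all $k$-line sums equal to $1 - \epsilon$, and the rescaled matrix $M' := (M - \epsilon P^{(k)}_\sigma)/(1 - \epsilon)$ is again $k$-bistochastic but has strictly smaller support. (The preservation of $k$-bistochasticity under this peeling step is a routine recursive check, reducing to the same statement at level $k-1$; the key point is that $\epsilon \leq r^{(\pi)}_{i,j}(M)$ whenever $r^{(\pi)}_{i,j}(P^{(k)}_\sigma) = 1$, which in turn follows because $M_{\alpha,\sigma(\alpha)}$ is bounded above by $r^{(\pi)}_{i,j}(M)$ for the relevant $\alpha$.) Iterating exhausts $M$ and yields the desired convex decomposition.

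The heart of the argument is therefore the following Key Lemma: every non-zero $k$-bistochastic $M$ admits some $\sigma \in S_n$ with $M_{\alpha,\sigma(\alpha)} > 0$ for all $\alpha \in \A_k$. I would prove it by induction on $k$. The base $k=1$ is Hall's marriage theorem applied to the bistochastic matrix $M$. For the inductive step, fix the identity ordering on coordinates; the top-level marginal $R$ is bistochastic, so Hall produces $\tau \in S_n$ with $r_{i,\tau(i)} > 0$ for each $i$, and applying the inductive hypothesis to each $(k{-}1)$-bistochastic block $M_{i,\tau(i)}$ yields bijections $\sigma_i \colon [n]\setminus\{i\} \to [n]\setminus\{\tau(i)\}$ with $(M_{i,\tau(i)})_{\alpha',\sigma_i(\alpha')} > 0$ for every $\alpha'$.

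The main obstacle is that these local bijections $\sigma_i$ are not a priori mutually compatible: nothing produced so far forces them to be the restrictions of a single $\sigma \in S_n$ with $\sigma(i) = \tau(i)$. Overcoming this is the technical heart of the proof, and it must exploit the constraints coming from the remaining $k!-1$ coordinate orderings: each ordering $\pi$ yields its own bistochastic $R^{(\pi)}$ and $(k{-}1)$-bistochastic blocks $M^{(\pi)}_{i,j}$, and I expect that combining the resulting Hall-type conditions forces the desired global consistency. A cleaner packaging would be to pass to an extreme-point analysis of the polytope of $k$-bistochastic matrices, showing directly that every vertex has $0/1$ entries whose support has the form $\{(\alpha,\sigma(\alpha)) : \alpha \in \A_k\}$ for some $\sigma \in S_n$, at which point standard convex-polytope theory would finish the proof.
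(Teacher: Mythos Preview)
Your peeling strategy is sound in outline, but the proof has a genuine gap at exactly the point you flag: the Key Lemma. The compatibility obstacle you identify is real, and neither of your proposed remedies closes it. The hope that Hall-type conditions from the other $k!-1$ orderings will ``force global consistency'' is not substantiated by any argument; and the extreme-point reformulation is just a restatement of the theorem, since showing that every vertex of the $k$-bistochastic polytope is some $P^{(k)}_\sigma$ is precisely what is to be proved.

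The paper avoids the gluing problem entirely by peeling at a different level. Instead of searching for a global $\sigma$, it applies ordinary Birkhoff to the top-level marginal $R$ (under the identity ordering $\pi = \mathrm{id}$) and, by a secondary induction on the number of nonzero entries of $R$, reduces to the case where $R$ is a permutation matrix, without loss of generality $R = I$. The decisive move is then to \emph{switch to a second ordering}, $\pi = (1\ 2)$. Under this ordering, every off-diagonal block $((\ast,i,\ldots),(\ast,j,\ldots))$ with $i \neq j$ has an entire row and an entire column of zero sub-blocks (the constraint $R = I$ in the first ordering forces first coordinates to agree on any nonzero entry), and a nonzero scalar multiple of a $(k{-}1)$-bistochastic matrix cannot have such a zero row; hence the block vanishes. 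Thus the new marginal is also $I$. The diagonal blocks are then $(k{-}1)$-bistochastic with nonzero sub-blocks only on the diagonal; the inductive hypothesis writes each as a convex combination of $P^{(k-1)}_\sigma$'s, and the diagonal sub-block constraint forces every such $\sigma$ to be the identity. Hence $M = I$ in the reduced case, and the theorem follows.

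So the missing idea is not a more refined Hall argument but the interaction between \emph{two different coordinate orderings}: once the marginal in one ordering is rigid, the constraints imposed by a second ordering propagate that rigidity to the entire matrix, with no need to glue local witnesses.
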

\begin{proof}
By induction on \(k\).

For $k=1$, this is Birkhoff's theorem.

For the induction step, let $M = r_{i,j} \cdot M_{i,j}$ be a block
decomposition of $M$ as in the definition of $k$-bistochasticity,
according to the natural ordering of the coordinates, i.e. \(\pi=\textrm{id}\). By Birkhoff's
theorem, $R = (r_{i,j})$ is a convex combination of permutation
matrices, and therefore it is either a permutation matrix, or else it can be written as a convex combination $R = sP + (1-s)T$, with $P$ a permutation
matrix, and $T$ a bistochastic matrix with more zero entries than
$R$. Treating $P$ separately, and proceeding in this manner by induction, we may
assume that $R$ is a permutation matrix, and without loss of
generality, we may assume that $R = I$, the identity matrix.  So in $M$, every non-zero entry is indexed by a pair of $k$-tuples of
the form $((i,*,*,\ldots,*),(i,*,*,\ldots,*))$ for some $i$.

Now, we reorder the rows and columns of $M$ with a lexicographic order on \(\mathcal{A}_{k}\)
determined by the transposition \(\pi=(1\ 2)\). Consider now an off-diagonal block in this ordering of the
form $((*,i,*,*,\ldots,*),(*,j,*,*,\ldots,*))$ with $i \neq j$. It
breaks into $(n-1)^2$ sub-blocks of size $(n-2)_{k-2} \times
(n-2)_{k-2}$, but only $n-2$ of them have nonzero entries, namely the sub-blocks
of the form $ ((l,i,*,*,\ldots,*),(l,j,*,*,\ldots,*))$ for some $l \neq i,j$. Hence, the block has a row of
zero sub-blocks, namely the sub-blocks
\[((j,i,*,*,\ldots,*),(m,j,*,*,\ldots,*))\]
for \(m \neq j\), and a column of zero sub-blocks, namely the sub-blocks
\[((m,i,*,*,\ldots,*),(i,j,*,*,\ldots,*))\]
for \(m \neq i\). Since the block is \((k-1)\)-bistochastic, it must be the zero matrix.

As for the diagonal blocks of \(M\) in the new ordering of $\mathcal{A}_{k}$,
blocks of the form $((*,i,*,*,\ldots,*),(*,i,*,*,\ldots,*))$, they can
only have nonzero sub-blocks on the diagonal. Since they are
$(k-1)$-bistochastic, by the induction hypothesis they are each equal
to a convex linear combination of \(P^{(k-1)}_{\sigma}\)'s where each \(\sigma\) is a permutation of $[n]\setminus \{i\}$. But if any of these \(\sigma\)'s has \(\sigma(l) \neq l\) for some \(l\), then the off-diagonal sub-block
\[((l,i,*,*,\ldots,*),(\sigma(l),i,*,*,\ldots,*))\]
is non-zero, a contradiction. Hence, each permutation \(\sigma\) must be the identity, and therefore all the diagonal blocks are copies of the $(n-1)_{k-1} \times (n-1)_{k-1}$ identity matrix. Hence, $M$ is the
identity matrix, and we are done.
\end{proof}
We now can prove Theorem \ref{thm:linear-combination-is-union}.

\begin{proof}[Proof of Theorem \ref{thm:linear-combination-is-union}:]
  We follow the lines of the proof of Theorem
  \ref{thm:linear-second-proof-1}. Let $f \in V_k$ be non-negative, and let \(M = (M_{\alpha,\beta})_{\alpha,\beta \in \mathcal{A}_{k}}\) be an \((n)_k \times (n)_k\) matrix (with rows and columns indexed by \(\mathcal{A}_k\)) which represents \(f\), meaning that
\[f = \sum_{\alpha,\beta \in \mathcal{A}_{k}}M_{\alpha,\beta}1_{T_{\alpha \mapsto \beta}},\]
i.e.
\[f(\sigma) = \sum_{\alpha \in \mathcal{A}_{k}} M_{\alpha,\sigma(\alpha)}\quad \forall \sigma \in S_{n}.\]
Let $\Ell_k$ be the set
  of all $k$-lines. Recall from Observation \ref{obs:kcosetpartition}
  that every $k$-line corresponds to a partition of $S_n$ into
  $k$-cosets. Therefore, adding a constant $x \in \mathbb{R}$ to a single $k$-line in $M$
  corresponds to increasing $f$ by $x$. We therefore associate with
  every line $\ell \in \Ell_k$ a variable $x_\ell$. We wish to solve the following system of linear inequalities
\[\sum_{\ell :(\alpha,\beta) \in \ell}x_{\ell} \geq -M_{\alpha,\beta}\ (\ell \in \mathcal{L}_{k}) \quad \textrm{subject to}\quad\sum_{\ell \in \Ell_{k}}x_{\ell}=0.\]

Again, by the strong duality theorem of linear programming, this is possible unless there exists an \((n)_{k} \times (n)_{k}\) matrix \(C = (c_{\alpha,\beta})_{\alpha,\beta \in \mathcal{A}_{k}}\) with non-negative entries, such that
\[\sum_{(\alpha,\beta)\in \ell} c_{\alpha,\beta} = 1\quad \forall \ell \in \Ell_{k},\]
and
\[\sum_{\alpha,\beta \in \mathcal{A}_{k}} c_{\alpha,\beta} M_{\alpha,\beta} <0.\]
 By Observation \ref{obs:linesum}, \(C\) is \(k\)-bistochastic. By the Generalized Birkhoff Theorem,
  $C$ is in the convex hull of the permutation matrices induced by
  permutations of $[n]$. This means, as before, that
  \[\sum_{\alpha,\beta \in \mathcal{A}_{k}} c_{\alpha,\beta}M_{\alpha,\beta}\]
   is a convex linear
  combination of values of $f$, and is therefore non-negative, a contradiction. Therefore, the system of linear inequalities above is solvable, proving the first part of the theorem. The second part follows as in the \(k=1\) case, by induction on the number of non-zero values of \(f\).
\end{proof}

This completes the proof of Theorems \ref{thm:final-result} and \ref{thm:X-main}.

\section{Open Problems}
In their landmark paper \cite{ak}, Ahlswede and Khachatrian characterized the largest \(k\)-intersecting subsets of \([n]^{(r)}\) for every value of \(k\), \(r\) and \(n\). In our opinion, the most natural open problem in the area is to characterize the largest \(k\)-intersecting subsets of \(S_{n}\) for every value of \(n\) and \(k\). We make the following

\begin{conj}
\label{conj:full}
A \(k\)-intersecting family in \(S_{n}\) with maximum size must be a `{\em double translate}' of one of the families
\[M_{i} = \{\sigma \in S_{n}:\ \sigma \textrm{ has }\geq k+i \textrm{ fixed points in } [k+2i]\}\ (0 \leq i \leq (n-k)/2),\]
i.e. of the form \(\sigma M_i \tau\) where \(\sigma,\tau \in S_n\).
\end{conj}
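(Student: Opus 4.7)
\section*{Proof Proposal for Conjecture \ref{conj:full}}

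The plan is to combine the spectral machinery developed in this paper with a compression/shifting argument in the spirit of Ahlswede--Khachatrian, so as to reduce, for every pair $(n,k)$, to verifying extremality of one of the finitely many families $M_i$. First I would dispose of the easy preliminaries: the fact that each $M_i$ is $k$-intersecting follows by pigeonhole (two permutations fixing at least $k+i$ points of $[k+2i]$ must agree on at least $k$ such points), and an inclusion--exclusion computation gives
\[
|M_i| = \sum_{j=k+i}^{k+2i}\binom{k+2i}{j}\,D_{n,k+2i,j},
\]
where $D_{n,m,j}$ counts permutations of $[n]$ with exactly $j$ fixed points among a prescribed $m$-subset. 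From these formulas one reads off, for each $(n,k)$, the unique index $i^\ast(n,k)$ for which $|M_{i^\ast}|$ is maximal; for $n$ large vs.\ $k$ one recovers $i^\ast=0$ (the Deza--Frankl case proved in Theorem \ref{thm:final-result}), while for $n\le 2k$ one expects the larger values of $i^\ast$ to come into play.

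The core of the proof would be the upper bound. My plan is to reinterpret the argument of Section \ref{sec:main-proofs} as a parametric one: for each candidate index $i$, design a pseudo-adjacency matrix $A^{(i)}$ for the Cayley graph $\Gamma_k$, written as a real linear combination of adjacency matrices of conjugacy-class Cayley graphs, whose least eigenvalue $\lambda_{\min}^{(i)}$ and top eigenvalue $\lambda_1^{(i)}$ realize equality in Theorem \ref{thm:hoffman-linear} exactly at $1_{M_i}$ (up to double translation). Concretely, the Fourier support of $1_{M_i}$ lives inside a prescribed set $\Lambda_i$ of Young diagrams (to be determined via Young's rule applied to the permutation modules associated with stabilizers of $[k+2i]$), and the linear system to be solved for the coefficients is the analogue of the one in Theorem \ref{thm:linear-exist}, but now indexed by $\Lambda_i$ rather than by the fat partitions $\ge(n-k,1^k)$. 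As in Theorem \ref{thm:minor-is-invertible-2}, the relevant minor of the character table should again be shown to be non-singular, so the coefficients exist; the main analytic work is then to show that all eigenvalues not in $\Lambda_i$ are strictly larger than $\lambda_{\min}^{(i)}$ in the desired regime of $(n,k)$.

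Granted the upper bound, the equality case is handled exactly as in this paper: the characteristic function of any extremal $I$ would lie in the span of the eigenspaces indexed by $\Lambda_i$, and one would then prove the analogue of Theorem \ref{thm:cosets-span} stating that this span is precisely the linear span of the double translates of $1_{M_i}$. The final step, passing from ``spanned by translates of $1_{M_i}$'' to ``equals a single translate'', should follow by a Boolean-function argument in the spirit of Section \ref{sec:boolean-functions}, using an appropriate generalization of Theorem \ref{thm:genbirkhoff} adapted to the block structure induced by $[k+2i]$.

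The hard part will be the middle step: exhibiting a single family of pseudo-adjacency matrices that works uniformly over all pairs $(n,k)$, including the transition regimes where two distinct $M_i$ have nearly equal size. In the set setting, Ahlswede--Khachatrian had to augment their shifting argument with a delicate ``pushing--pulling'' operation precisely at such transitions, and I expect an analogous difficulty here: the conjugacy-class Cayley graphs one would naturally use to favour $M_i$ may fail to favour $M_{i+1}$ simultaneously, so a new idea (perhaps a hybrid pseudo-adjacency matrix, or a genuinely combinatorial shifting on $S_n$ that monotonically transforms an arbitrary $k$-intersecting family into some $\sigma M_i \tau$) will be required to close the gap. I would expect partial progress first for small $k$ (where the list of $M_i$ is short) and then a uniform treatment via such a shifting operation.
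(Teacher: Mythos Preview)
The paper does not prove this statement: Conjecture \ref{conj:full} appears in the Open Problems section and is left open. So there is no proof in the paper to compare your proposal against. More importantly, the paper explicitly explains why the approach you sketch cannot work as stated. Immediately after the conjecture, the authors observe that the pseudo-adjacency-matrix method of Section~\ref{sec:main-proofs} already fails to give the bound $(n-k)!$ when $n=2k+2$ (it fails for $k=2$, $n=6$): there is no real linear combination of conjugacy-class Cayley graphs inside $\Gamma_k$ whose eigenvalues realize the required ratio $\omega_{n,k}$. They conclude that ``new techniques will be required'' for Conjecture~\ref{conj:full}. Your core step --- building a matrix $A^{(i)}$ as a linear combination of conjugacy-class Cayley graphs so that Hoffman's bound is tight at $M_i$ --- is therefore known to be infeasible in precisely the regime where the conjecture is interesting (small $n$ relative to $k$, where $i^\ast>0$).

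There is a second structural gap. Your equality analysis assumes an analogue of Theorem~\ref{thm:cosets-span}: that the relevant eigenspace $\Lambda_i$ is exactly the linear span of the double translates of $1_{M_i}$. For $i=0$ this works because the $k$-cosets are cosets of a subgroup and the permutation module $M^{(n-k,1^k)}$ has a clean decomposition. For $i\ge 1$, the set $M_i$ is not a coset, and its characteristic function has Fourier support strictly larger than that of a $k$-coset; there is no reason the span of its translates should coincide with a direct sum of isotypic components, and without that, the Boolean-function step (your analogue of Theorem~\ref{thm:span-is-disjoint-union} and of the generalized Birkhoff theorem) has nothing to bite on. You acknowledge the transition regimes as ``the hard part'', but the obstruction is present even away from transitions: the spectral certificate simply does not exist for all $(n,k)$, and the structural span statement is unproven and likely false in the form you need.
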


This would imply that the maximum size is \((n-k)!\) for \(n > 2k\). It is natural to ask how large \(n\) must be for our method to give this bound, i.e. when there exists a weighted graph \(Y\) which is a real linear combination of Cayley graphs on \(S_{n}\) generated by conjugacy-classes in \(\textrm{FPF}_k\), such that \(Y\) has maximum eigenvalue \(1\) and minimum eigenvalue
\[\omega_{n,k} = -\frac{1}{n(n-1)\ldots(n-k+1)-1}.\]
It turns out that this need not be the case when \(n = 2k+2\); indeed, it fails for \(k=2\) and \(n=6\). We believe that new techniques will be required to prove Conjecture \ref{conj:full}.\\

\subsection*{Acknowledgements}
The authors would like to thank Noga
Alon, Siavosh Benabbas, Imre Leader, Timothy Gowers, Yuval Roichman, and
John Stembridge for various useful remarks.

\subsection*{Erratum, added 7th July 2017}
Recently, Yuval Filmus \cite{filmus} has pointed out that Theorem \ref{thm:linear-combination-is-union} is false for each $k \geq 2$; the above proof of Theorem \ref{thm:genbirkhoff} contains a hole, and in fact Theorem \ref{thm:genbirkhoff} is false for each $k \geq 2$. (Both theorems are true in the special case $k=1$.) Counterexamples are given in \cite{filmus}. The equality part of Theorem \ref{thm:final-result} can of course be deduced from Theorem \ref{thm:ellis-stability}, instead of from the (false) Theorem \ref{thm:linear-combination-is-union}. Moreover, the equality part of Theorem \ref{thm:X-main} can be deduced from a stability result for cross-intersecting families of permutations, Theorem 10 in \cite{kstability}. The proof of the upper bound in Theorem \ref{thm:final-result} is of course unaffected.

\bibliography{tintersecting}

\begin{thebibliography}{10}

\bibitem{ak}
Rudolf Ahlswede and Levon~H. Khachatrian.
\newblock The complete intersection theorem for systems of finite sets.
\newblock {\em European Journal of Combinatorics}, 18(2):125--136, 1997.

\bibitem{AKKMS}
Noga Alon, Haim Kaplan, Mchael Krivelevich, Dalia Malkhi, and Julien Stern.
\newblock Scalable secure storage when half the system is faulty.
\newblock In {\em 27th International Colloquium on Automata, Languages and
  Programming}, pages 576--587, 2000.

\bibitem{babai}
L\'{a}szl\'{o} Babai.
\newblock Spectra of cayley graphs.
\newblock {\em Journal of Combinatorial Theory, Series B}, 1979.

\bibitem{birkhoff}
G.~Birkhoff.
\newblock Three observations on linear algebra.
\newblock {\em Univ. Nac. Tucum\'an. Revista A.}, 5:147--151, 1946.

\bibitem{Bourgain}
Jean Bourgain.
\newblock On the distribution of the {Fourier} spectrum of {B}oolean functions.
\newblock {\em Israel Journal of Mathematics}, 131:269--276, 2002.

\bibitem{cameron-ku}
Peter~J. Cameron and C.~Y. Ku.
\newblock Intersecting families of permutations.
\newblock {\em European Journal of Combinatorics}, 24:881--890, 2003.

\bibitem{diaconis-1980}
Persi Diaconis and Mehrdad Shahshahani.
\newblock Generating a random permutation with random transpositions.
\newblock {\em Probability Theory and Related Fields}, 57(2):159--179, 1981.

\bibitem{kstability}
David Ellis.
\newblock Stability for {\em t}-intersecting families of permutations.
\newblock To appear in {\em Journal of Combinatorial Theory, Series A}.

\bibitem{filmus}
Yuval Filmus.
\newblock A comment on {I}ntersecting {F}amilies of {P}ermutations, preprint,
  ar{X}iv:1706.10146.

\bibitem{frt}
J.~S. Frame, G.~B. Robinson, and R.~M. Thrall.
\newblock The {H}ook graphs of {$S_{n}$}.
\newblock {\em Canadian Journal of Mathematics}, 6:316--325, 1954.

\bibitem{deza-frankl}
Peter Frankl and Mikhail Deza.
\newblock On the maximum number of permutations with given maximal or minimal
  distance.
\newblock {\em Journal of Combinatorial Theory, Series A}, 22(3):352--360,
  1977.

\bibitem{F98}
Ehud Friedgut.
\newblock Boolean functions with low average sensitivity depend on few
  coordinates.
\newblock {\em Combinatorica}, 18(1):27--36, 1998.

\bibitem{FKN}
Ehud Friedgut, Gil Kalai, and Assaf Naor.
\newblock Boolean functions whose {Fourier} transform is concentrated on the
  first two levels and neutral social choice.
\newblock {\em Advances in Applied Mathematics}, 29:427--437, 2002.

\bibitem{friedguttalk}
Ehud Friedgut and Haran Pilpel.
\newblock Intersecting families of permutations: An algebraic approach.
\newblock Talk given at the Oberwolfach Conference in Combinatorics, January
  2008.

\bibitem{godsil-2008}
Chris Godsil and Karen Meagher.
\newblock A new proof of the {E}rd{\H{o}}s-{K}o-{R}ado theorem for intersecting
  families of permutations.
\newblock {\em European Journal of Combinatorics}, 30(2):404--414, 2009.

\bibitem{amgm}
G.H. Hardy, J.E. Littlewood, and G.~P\'olya.
\newblock {\em An Introduction to Inequalities}.
\newblock Cambridge University Press, 1988.

\bibitem{hiltonmilner}
A.J.W. Hilton and E.C. Milner.
\newblock Some intersection theorems for systems of finite sets.
\newblock {\em Quarterly Journal of Mathematics, Oxford}, 18:369--384, 1967.

\bibitem{hoffman}
A.~J. Hoffman.
\newblock On eigenvalues and colorings of graphs.
\newblock {\em Graph Theory and its Applications}, 1969.

\bibitem{isaacs}
I.~Martin Isaacs.
\newblock {\em Character Theory of Finite Groups}.
\newblock Academic Press, 1976.

\bibitem{jordan}
C.~Jordan.
\newblock {\em Trait\'e des Substitutions et des Equations Alg\'ebriques}.
\newblock Gauthier-Villars, Paris, 1870.

\bibitem{kkl}
Jeff Kahn, Gil Kalai, and Nathan Linial.
\newblock The influence of variables on {B}oolean functions (extended
  abstract).
\newblock In {\em FOCS}, pages 68--80. IEEE, 1988.

\bibitem{larose-malvenuto}
Benoit Larose and Claudia Malvenuto.
\newblock Stable sets of maximal size in {K}neser-type graphs.
\newblock {\em European Journal of Combinatorics}, 25(5):657--673, 2004.

\bibitem{leader-private-2008}
Imre Leader.
\newblock Open Problems Session, British Combinatorial Conference, 2005.

\bibitem{matousek}
Ji\v{r}\'i Matou\v{s}ek and Bernd G\"{a}rtner.
\newblock {\em Understanding and Using Linear Programming}.
\newblock Springer-Verlag, Berlin, 2007.

\bibitem{mish96}
S.~P. Mishchenko.
\newblock Lower bounds on the dimensions of irreducible representations of
  symmetric groups and of the exponents of the exponential of varieties of
  {L}ie algebras.
\newblock {\em Matematicheski Sbornik}, 187:83--94, 1996.

\bibitem{renteln}
Paul Renteln.
\newblock On the spectrum of the derangement graph.
\newblock {\em Electronic Journal of Combinatorics}, 14, 2007.
\newblock \#R82.

\bibitem{roichman-1994}
Yuval Roichman.
\newblock Upper bound on the characters of the symmetric groups.
\newblock {\em Inventiones Mathematicae}, 125:451--485, 1996.

\bibitem{sagan}
Bruce~E. Sagan.
\newblock {\em {The Symmetric Group: Representations, Combinatorial Algorithms,
  and Symmetric Functions}}.
\newblock Springer-Verlag, New-York, 1991.
\newblock [2nd revised printing, 2001].

\bibitem{terras}
Audrey Terras.
\newblock {\em Fourier Analysis on Finite Groups and Applications}.
\newblock London Mathematical Society Student Texts, 43. Cambridge University
  Press, 1999.

\end{thebibliography}
\bibliographystyle{plain}

\end{document}